\theoremstyle{remark}
\newtheorem{example}{\bf Example}
\newtheorem{remark}{\bf Remark}
\theoremstyle{plain}
\newtheorem{proposition}{\bf Proposition}
\newtheorem{definition}{\bf Definition}
\newtheorem{theorem}{\bf Theorem}
\newtheorem*{theorem*}{\bf Theorem}
\newtheorem{lemma}{\bf Lemma}
\newtheorem{corollary}{\bf Corollary}
\def\C{{\mathbb C}}
\def\D{{\mathbb D}}
\def\R{{\mathbb R}}
\def\P{{\mathbb P}}
\def\J{{\mathcal J}}
\def\K{{\mathcal K}}
\def\I{{\mathcal I}}
\def\Part{{\mathrm{Part}(I)}}
\def\part{{\mathrm{Part}^*(I)}}
\def\cal{\mathcal}
\def\eqdef{{:=}}
\def\id{{\rm id}}
\def\ds{\displaystyle}
\def\rad{{{\vartheta}_{\rm rad}}}
\def\theta{{\vartheta}}
\def\ie{{\it{i.e.}},}
\title{B\"ottcher coordinates}
\subjclass{}
\thanks{The research of the first author was supported in part by the grant ANR-08-JCJC-0002 and the IUF}
\email{xavier.buff@math.univ-toulouse.fr}
\address{ %
  Institut de Math\'ematiques de Toulouse\\
 Universit\'e Paul Sabatier\\
  118, route de Narbonne \\
  31062 Toulouse Cedex \\
  France }
\email{adame@maths.warwick.ac.uk}
\address{ %
Mathematics institute\\
 University of Warwick\\
 Coventry CV4 7AL\\
 United Kingdom }
\thanks{The research of the third author was supported in part by the NSF}
\email{kochs@math.harvard.edu}
\address{ %
Department of Mathematics\\
Harvard University\\
Cambridge MA 02138\\
United States
}
\begin{document}

\maketitle

\begin{abstract}
A well-known theorem of B\"ottcher asserts that an analytic germ
$f:(\C,0)\to (\C,0)$ which has a superattracting fixed point at $0$,
more precisely of the form $f(z) = az^k + o(z^k)$ for some $a\in
\C^*$, is analytically conjugate to $z\mapsto az^k$ by an analytic
germ $\phi:(\C,0)\to (\C,0)$ which is tangent to the identity at
$0$. In this article, we generalize this result to analytic maps of
several complex variables.
\end{abstract}

\section*{Introduction}

An analytic germ $f:(\C,0)\to(\C,0)$ with a superattracting fixed
point at $0$ can be written in the form
\[
f(z)=a z^k+{\cal O}(z^{k+1}),\qquad a\neq 0,\;\;k\geq 2.
\]
In 1904, L. E. B\"ottcher proved the following theorem.

\begin{theorem*}[B\"ottcher]\label{bottcherone}
If $f:(\C,0)\to(\C,0)$ has a superattracting point at $0$ as above,
there exists a germ of analytic map $\phi:(\C,0)\to(\C,0)$, which is
tangent to the identity at $0$ and conjugates $f$ to the map
$h:w\mapsto a w^k$ in some neighborhood of $0$, \ie{} $\phi\circ f =
h\circ \phi$.
\end{theorem*}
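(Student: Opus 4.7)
My plan is a two-step argument: first reduce to the normalized case $a=1$ by a linear conjugation, then construct $\phi$ directly as a limit of suitably chosen $k^n$-th roots of the iterates $f^n$. For the reduction, I choose $\lambda\in\C^*$ with $\lambda^{k-1}=a$ and set $\tilde f(z)\eqdef\lambda f(z/\lambda)$; a direct calculation gives $\tilde f(z)=z^k+{\cal O}(z^{k+1})$. If I can produce an analytic germ $\tilde\phi:(\C,0)\to(\C,0)$ tangent to the identity with $\tilde\phi\circ\tilde f(w)=\tilde\phi(w)^k$, then $\phi(z)\eqdef\lambda^{-1}\tilde\phi(\lambda z)$ is also tangent to the identity and satisfies $\phi\circ f=h\circ\phi$. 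Hence it suffices to treat $f(z)=z^k\bigl(1+g(z)\bigr)$ with $g$ analytic near $0$ and $g(0)=0$.

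Since $0$ is superattracting, there exist $r>0$ and $\rho\in(0,1)$ with $\{|z|<r\}$ forward-invariant under $f$ and $|f^n(z)|\le\rho^{k^n}$ on this disk. A straightforward induction yields
\[
 f^n(z)=z^{k^n}\prod_{j=0}^{n-1}\bigl(1+g(f^j(z))\bigr)^{k^{n-1-j}},
\]
and I set
\[
 \phi_n(z)\eqdef z\prod_{j=0}^{n-1}\bigl(1+g(f^j(z))\bigr)^{1/k^{j+1}},
\]
using the principal branch of each root (legitimate once $r$ is small enough that $|g(f^j(z))|<1$ for all $j$). By construction $\phi_n(z)^{k^n}=f^n(z)$, and the estimate $|g(f^j(z))|\le C|f^j(z)|\le C\rho^{k^j}$ forces the factors to tend to $1$ so quickly that the infinite product converges uniformly on $\{|z|<r\}$ to an analytic germ $\phi$ with $\phi(0)=0$ and $\phi'(0)=1$. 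The functional equation then drops out: since $\phi_n(f(z))^{k^n}=f^{n+1}(z)=\bigl(\phi_{n+1}(z)^k\bigr)^{k^n}$, and both $\phi_n\circ f$ and $\phi_{n+1}^{\,k}$ are analytic $k^n$-th roots of $f^{n+1}$ with leading term $z^k$, they must coincide; letting $n\to\infty$ gives $\phi\circ f=\phi^k$, which after undoing the reduction is the desired conjugacy.

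The delicate point, and what I expect to be the main obstacle, is the coherent selection of the $k^n$-th roots together with the verification that the resulting infinite product defines an analytic germ tangent to the identity. This hinges on the super-exponential decay $|f^n(z)|\lesssim\rho^{k^n}$, which in turn requires $k\ge 2$ together with superattraction; absent this rapid decay, the product of $1/k^{j+1}$-th roots would simply not converge.
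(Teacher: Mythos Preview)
Your proof is correct and is the standard classical argument for B\"ottcher's theorem: reduce to the monic case, write $f^{\circ n}(z)=z^{k^n}\cdot(\text{unit})$, extract coherent $k^n$-th roots, and pass to the limit using the super-exponential contraction $|f^{\circ n}(z)|\lesssim \rho^{k^n}$. The details you give are all fine; the one estimate that could be stated more carefully is $|f^{\circ n}(z)|\le \rho^{k^n}$, which strictly speaking holds only up to a harmless multiplicative constant, but this changes nothing in the convergence argument.

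There is, however, no proof of this statement in the paper to compare against: the one-variable B\"ottcher theorem is quoted in the introduction as a classical 1904 result and serves purely as motivation for the higher-dimensional Theorem~\ref{theo_local}. If one insists on a comparison, the paper's machinery does specialize to dimension one and yields a genuinely different proof: with $p=1$ and $E=\C$, the radial field $\rad=z\,\partial/\partial z$ is trivially tangent to the (germ of the) postcritical set $\{0\}$, so condition~(3) of Theorem~\ref{theo_local} holds, and the paper's route produces $\phi$ via Poincar\'e linearization of an asymptotically radial vector field, Green functions, and Cartan's lemma. Your infinite-product construction is far more direct and elementary in one variable; the paper's vector-field approach is what survives and organizes the argument in several variables, where no canonical ``$k^n$-th root of $F^{\circ n}$'' is available.
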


The germ $\phi$ is called a {\em{B\"ottcher coordinate}} for the
germ $f$.

B\"ottcher coordinates have been an essential tool in the study of
complex dynamics in one variable; a B\"ottcher coordinate gives
polar coordinates near the associated superattracting fixed point,
which are compatible with the dynamics of $f$. In the case of the
superattracting fixed point at infinity for a polynomial, the
angular coordinate is called the {\em{external angle}} and its level
curves are called {\em{external rays}}. The importance of external
angles and external rays was emphasized in \cite{dh}.

\begin{figure}[htbp] 
   \centering
   \includegraphics[width=5in]{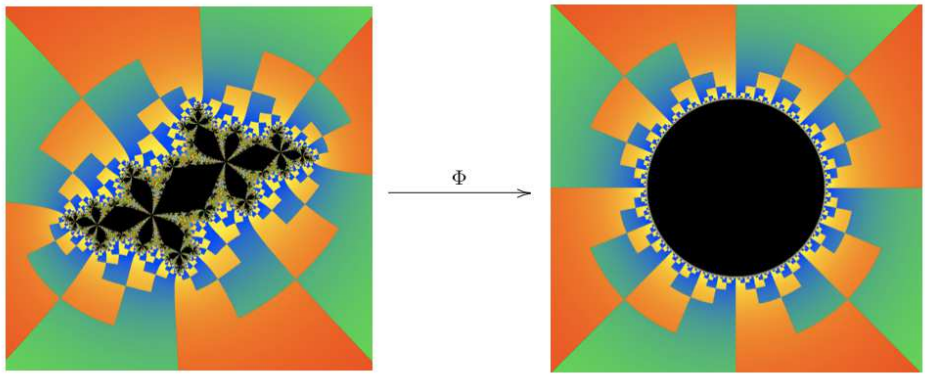}
   \caption{For polynomials of one complex variable, the B\"ottcher coordinate of $\infty$
   extends throughout the entire basin if the filled Julia set is connected. The checkerboard
   pattern in the pictures above highlights the external rays in each picture.
   On the right is the filled Julia set of the model map, $z\mapsto z^2$, and on the left
   is the filled Julia set for a quadratic polynomial.}
   \label{polarcoords}
\end{figure}

It is quite natural to ask whether there is an analogue of this
theorem in higher dimensions; that is, given an analytic germ
$F:(\C^m,0)\to (\C^m,0)$ is there an analytic germ $\Phi:(\C^m,0)\to
(\C^m,0)$ which conjugates $F$ to its terms of lowest degree?
According to Hubbard and Papadopol in \cite{hp}, ``the map is not in
general locally conjugate, even topologically, to its terms of
lowest degree; the local geometry near such a point is much too rich
for anything like that to be true.''  Hubbard and Papadopol present
the following example to illustrate their point.

\begin{example}\label{firstexample}
Consider the map
\[
F:\C^2\to \C^2\text{ given by }(x,y)\mapsto (x^2+y^3,y^2).
\]
Let $H:\C^2\to\C^2$ be the map $(x,y)\mapsto (x^2,y^2)$. There is no
analytic conjugacy between $F$ and $H$ in a neighborhood of the
origin because the dynamics of the maps are incompatible. Indeed,
the critical locus of $F$ consists of the the two axes, which is
also the critical locus of $H$. But the map $H$ fixes the two axes,
whereas $F$ fixes $y=0$ and maps $x=0$ to the curve $t\mapsto
(t^3,t^2)$.

However, there is another explanation which is more relevant to our
discussion. The critical value locus of $F$ contains the curve
$t\mapsto (t^3,t^2)$ which has a cusp; any smooth conjugacy would
have to map this singular curve to a component of the critical value
locus of $H$; however, the critical value set of $H$ consists of
$x=0$ and $y=0$, which are smooth. So certainly no analytic
conjugacy exists.
\end{example}

\begin{remark}\label{firstremark}
In the example above, the map $H:\C^{m+1}\to\C^{m+1}$ is {\em{homogeneous}} and {\em{nondegenerate}}. It descends to an endomorphism of projective space $h:{\mathbb
P}^m\rightarrow{\mathbb P}^m$; the critical locus and the
postcritical locus of $H$ will be cones over the critical locus and
postcritical locus of $h$.

Therefore, if $F$ is going to be locally conjugate to a homogeneous
map in a neighborhood of $0$, each component  of the
critical locus and postcritical locus of $F$ containing the
superattracting fixed point, must be an {\em{analytic cone}}; that
is, each of these components must be the image of a cone under an
analytic isomorphism. This is a rather strong condition; the
homogeneous maps certainly satisfy it, and we will see in section
\ref{applications} that there are other families of maps which
satisfy this criterion.
\end{remark}

The following people have studied the dynamics of maps with
superattracting fixed points in higher dimensions: J.H. Hubbard and
P. Papadopol develop a very important theory of Green functions for
maps with superattracting fixed points in $\C^m$ (see \cite{hp}), C.
Favre and M. Jonsson classify {\em{contracting rigid germs}} of
$(\C^2,0)$ in \cite{fj}. Both S. Ushiki and T. Ueda have results
about a B\"ottcher theorem in higher dimensions: Ushiki presents a
local B\"ottcher theorem in $\C^2$ for maps of a special form in
\cite{us}, and Ueda presents both local and global B\"ottcher
theorems in $\C^m$ for a particular family of maps in \cite{ue}. 

In this article, we begin with an analytic germ $F:(\C^m,0)\to
(\C^m,0)$, which has an {\em{adapted superattracting fixed point}}
at $0$. We present necessary and sufficient conditions for such an
$F$ to be locally conjugate to its {\em{quasihomogeneous part}} at
$0$. Our criteria are stated in terms of {\em{admissible vector
fields}}; these vector fields detect the shape of the postcritical
locus of our map $F$ to see if the analytic cone condition in remark
\ref{firstremark} is satisfied. The relevant definitions  are
given in Section \ref{setup}.

\begin{theorem}[Local B\"ottcher Coordinates]\label{theo_local}
Let $F:(\C^m,0)\to (\C^m,0)$ be a germ of an analytic map having an
adapted superattracting fixed point at $0\in \C^m$. Let $H:\C^m\to
\C^m$ be the quasihomogeneous part of $F$ at $0$ with multidegree $(k_1,\ldots k_p)$. Then the following
are equivalent.
\begin{enumerate}
\item There is a germ of an analytic map $\Phi:(\C^m,0)\to(\C^m,0)$ such that
\[\Phi(0)=0, \quad D_0\Phi=\mathrm{id}\quad\text{and}\quad \Phi\circ F=H\circ
\Phi.\]
\item There is an admissible $p$-tuple of germs of vector fields $(\xi_1\ldots,\xi_p)$ such that
\[
DF\circ \xi_j=k_j\cdot \xi_j\circ F
\]
 near $0$ for all $j\in [1,p]$.
\item There is an admissible $p$-tuple of germs of vector fields $(\zeta_1,\ldots,\zeta_p)$ such that $\zeta_j$ is tangent
to the germ of the postcritical set of $F$ for all $j\in[1,p]$.
\end{enumerate}
\end{theorem}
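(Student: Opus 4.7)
My plan is to close the equivalences as $(1)\Rightarrow(2)\Rightarrow(3)\Rightarrow(2)\Rightarrow(1)$, using $(2)\Rightarrow(3)$ as a bridge and concentrating the real work on $(3)\Rightarrow(2)$ and on producing the conjugacy from the vector fields.

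For $(1)\Rightarrow(2)$ the idea is transport of structure. The quasihomogeneous map $H$ carries a natural admissible $p$-tuple of Euler-type vector fields $(\eta_1,\ldots,\eta_p)$, one per weighted block of variables of weight $k_j$; a direct computation gives $DH\circ\eta_j=k_j\,\eta_j\circ H$. Given $\Phi$ as in (1), set $\xi_j(x)=(D_x\Phi)^{-1}\eta_j(\Phi(x))$. Differentiating $\Phi\circ F=H\circ\Phi$ and substituting into the eigen-relation for $\eta_j$ yields the analogous relation for $F$, and because $D_0\Phi=\mathrm{id}$ the admissibility of $(\eta_j)$ transfers to $(\xi_j)$. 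For $(2)\Rightarrow(3)$ take $\zeta_j:=\xi_j$: at any critical point $x$ of $F$ the eigen-relation reads $\xi_j(F(x))=k_j^{-1}DF(x)\xi_j(x)$, which forces $\xi_j$ at $F(x)$ to land in the range of $DF(x)$ and hence to be tangent to the critical value locus at smooth points; iterating along forward orbits propagates tangency to every $F^n(C_F)$, whose union is the postcritical set.

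The core of the theorem is $(3)\Rightarrow(2)$. From $\zeta_j$ tangent to the postcritical set I would produce the desired eigen field by renormalization,
\[
\xi_j(y)\;=\;\lim_{n\to\infty}\;k_j^{-n}\,DF^n(x_n)\,\zeta_j(x_n),
\]
where $x_n\to 0$ is a branch of $F^{-n}(y)$ selected using the superattracting dynamics. The apparent rank-deficiency blow-up produced by $DF^n$ along critical orbits is precisely what tangency of $\zeta_j$ to the postcritical set cancels, and the factor $k_j^{-n}$ is the scale dictated by the quasihomogeneous weights that turns the limit finite. Convergence and admissibility of $\xi_j$ will follow from a quasihomogeneous contraction estimate for $F$ in the adapted coordinates together with the admissibility of $(\zeta_1,\ldots,\zeta_p)$. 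This is the main obstacle: tracking the balance between the degeneracy of $DF^n$ and the decay of $k_j^{-n}$ term by term is what makes the adaptedness and admissibility hypotheses indispensable.

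Finally, for $(2)\Rightarrow(1)$, I would straighten the admissible tuple $(\xi_1,\ldots,\xi_p)$ to the model Euler fields $(\eta_1,\ldots,\eta_p)$ by a germ $\Phi$ tangent to the identity, the admissibility conditions supplying the joint integrability and spanning needed for such a straightening. Once this is achieved, both $\Phi\circ F$ and $H\circ\Phi$ are germs tangent to the identity that scale each $\eta_j$ by $k_j$; a uniqueness argument for such eigen-compatible conjugacies, in the spirit of the classical one-dimensional B\"ottcher theorem, identifies them and yields $\Phi\circ F=H\circ\Phi$.
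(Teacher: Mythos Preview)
Your overall architecture is reasonable, and your $(1)\Rightarrow(2)$ and $(2)\Rightarrow(3)$ steps are close to what the paper does (the paper actually runs the cycle $(1)\Rightarrow(3)$, $(2)\Rightarrow(1)$, $(3)\Rightarrow(2)$, but that is immaterial). However, there are genuine gaps in the two substantive implications.

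\medskip
\textbf{The renormalization formula in $(3)\Rightarrow(2)$ is written backwards.} You propose
\[
\xi_j(y)=\lim_{n\to\infty} k_j^{-n}\,DF^n(x_n)\,\zeta_j(x_n),\qquad x_n\in F^{-n}(y),\ x_n\to 0.
\]
But for $y\neq 0$ there is no sequence $x_n\in F^{-n}(y)$ with $x_n\to 0$: since $F(0)=0$, continuity would force $y=0$. Even ignoring this, pushing forward by $DF^n$ and then multiplying by $k_j^{-n}$ contracts twice and the limit is $0$. The correct operation is the \emph{pullback}
\[
\zeta_j^n \eqdef k_j^n\,(F^{\circ n})^*\zeta_j,\qquad\text{i.e.}\qquad \zeta_j^n(y)=k_j^n\,(D_yF^{\circ n})^{-1}\zeta_j\bigl(F^{\circ n}(y)\bigr)
\]
away from the critical set, and Arnol'd's lemma (tangency of $\zeta_j$ to the critical value set) is precisely what makes this extend analytically across the critical set. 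This is where your tangency hypothesis enters, not as a cancellation of ``rank-deficiency blow-up'' in a pushforward.

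\medskip
\textbf{Convergence of $\zeta_j^n$ is the entire content, and the tools are missing.} A ``quasihomogeneous contraction estimate'' is not enough. The paper's mechanism is: (i) for each $n$, simultaneously linearize the admissible tuple $(\zeta_1^n,\ldots,\zeta_p^n)$ by a Poincar\'e-type argument (the sum $\zeta^n=\sum_j\zeta_j^n$ is asymptotically radial), obtaining germs $\Phi_n$ with $D_0\Phi_n=\mathrm{id}$ and $\Phi_n\circ F=H\circ\Phi_{n+1}$; (ii) use the Hubbard--Papadopol dynamical Green functions $\mathcal G_{F,j}$ and $\mathcal G_{H,j}$ to show the $\Phi_n$ are all defined on a fixed sublevel set and uniformly bounded there; (iii) invoke Cartan's lemma (the multivariable Schwarz lemma) to conclude the $\Phi_n$ have a unique normal limit $\Phi$, whence $\zeta_j^n=\Phi_n^*\theta_j\to\Phi^*\theta_j=\xi_j$. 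None of these three ingredients appears in your sketch, and they are not replaceable by a single contraction estimate.

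\medskip
\textbf{In $(2)\Rightarrow(1)$, the final identification is not a B\"ottcher-type uniqueness.} After straightening $(\xi_j)$ to $(\theta_j)$ by $\Phi$, the conjugated map $\check F=\Phi\circ F\circ\Phi^{-1}$ satisfies $D\check F\circ\theta_j=k_j\,\theta_j\circ\check F$. The conclusion $\check F=H$ then follows from the Euler-type characterization of quasihomogeneous germs (a germ satisfying these relations for all $j$ \emph{is} quasihomogeneous of the given multidegree); this is a separate lemma you need to state and prove. Also, ``admissibility supplies joint integrability'' is too vague: the paper linearizes the asymptotically radial sum $\xi=\sum_j\xi_j$ by Poincar\'e, and then commutativity of the $\xi_j$ with $\xi$ forces each $D\Phi\circ\xi_j$ to be linear, hence equal to $\theta_j$.

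\medskip
\textbf{A smaller point on admissibility in $(1)\Rightarrow(2)$.} Saying ``$D_0\Phi=\mathrm{id}$ transfers admissibility'' only handles the linear-part condition. The requirements that $\xi_j$ be tangent to $E_j$ and vanish on $E_j^\top$ need $\Phi^{-1}(E_j)=E_j$ and $\Phi^{-1}(E_j^\top)=E_j^\top$ as germs, which is not automatic; the paper proves this using the local total invariance of $E_j^\top$ under $F$ together with a short argument exploiting the $k_j$-th root of unity symmetry of $H$.
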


The proof of theorem \ref{theo_local} is a beautiful confluence of
classical results of Euler, Poincar\'e, Cartan, and Arnol'd with the
theory of Green functions developed in \cite{hp}. Sections
\ref{qhmaps}--\ref{cartan} are devoted to presenting these results
and adapting them to our setting.

In section \ref{globaltheoremsection}, we present a result about
extending a local B\"ottcher coordinate $\Phi$ to a larger domain. More precisely let $\Omega$ be a complex analytic manifold, and $F:\Omega\to \Omega$ be an analytic map with a superattracting fixed point at $a\in\Omega$. The basin of attraction of $a$ for $F$ is the open set of points whose orbits converge to $a$. The immediate basin is the connected component of the basin containing $a$; we denote the immediate basin as ${\cal B}_a(F)$. 

\begin{theorem}[Global Böttcher coordinates]\label{theo_global}
Let $\Omega$ be a complex analytic manifold,  $F:\Omega\to \Omega$
be a proper analytic map with a superattracting fixed point at $a\in
\Omega$ and $H:T_a\Omega\to T_a\Omega$ be a quasihomogeneous map.
Suppose that
\begin{itemize}

\item there is a local isomorphism $\Phi:(\Omega,a)\to (T_a\Omega,0)$ with $\Phi\circ F = H\circ \Phi$;

\item near $a$, $\Phi$ maps the postcritical set of $F:{\cal B}_a(F)\to{\cal B}_a(F)$ to the postcritical set of $H:{\cal B}_0(H)\to {\cal B}_0(H)$.
\end{itemize}
Then, $\Phi$ extends to a global isomorphism $\Phi:{\cal B}_a(F)\to {\cal B}_0(H)$ conjugating $F$ to $H$:
\[\xymatrix{
&{\cal B}_a(F)\ar[rr]^{\Phi}\ar[d]^F & &{\cal B}_0(H)\ar[d]^H\\
&{\cal B}_a(F)\ar[rr]^{\Phi} & &{\cal B}_0(H).}
\]
\end{theorem}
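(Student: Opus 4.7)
The plan is to extend $\Phi$ by iterative lifting, building biholomorphisms on an exhaustion of ${\cal B}_a(F)$ by nested open sets. First I would fix a connected neighborhood $U$ of $a$ on which $\Phi$ is a biholomorphism onto $U':=\Phi(U)$, arranged so that $F(U)\Subset U\subset{\cal B}_a(F)$ and $H(U')\Subset U'\subset{\cal B}_0(H)$; this is possible because $a$ is superattracting. For $n\geq 0$, let $U_n$ denote the connected component of $F^{-n}(U)\cap{\cal B}_a(F)$ containing $a$, and define $U'_n$ analogously. Since iterates of a holomorphic map converge to an attracting fixed point uniformly on compact subsets of its basin, any path from $a$ to an arbitrary $z\in{\cal B}_a(F)$ is eventually pushed into $U$ by $F^n$; this forces it to lie in $U_n$ and hence gives the exhaustions $\bigcup_n U_n={\cal B}_a(F)$ and $\bigcup_n U'_n={\cal B}_0(H)$.

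I would then prove by induction on $n$ that $\Phi$ extends to a biholomorphism $\Phi_n\colon U_n\to U'_n$ satisfying $H\circ\Phi_n=\Phi_n\circ F$ and sending the postcritical set $P_F\cap U_n$ into $P_H\cap U'_n$ (the latter property propagates automatically from the conjugacy, since $\Phi_n$ takes critical sets to critical sets). The base case is the hypothesis. Given $\Phi_{n-1}$, constructing $\Phi_n$ amounts to solving the lifting problem $H\circ\Phi_n=\Phi_{n-1}\circ F$, where a partial lift on $U_{n-1}\subset U_n$ is already supplied by $\Phi_{n-1}$ itself. Both $F\colon U_n\to U_{n-1}$ and $H\colon U'_n\to U'_{n-1}$ are proper branched covers, unramified over the complements of the respective postcritical sets. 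The inductive postcritical matching implies that the pullback of the $H$-cover via $\Phi_{n-1}$ carries the same branching data as the $F$-cover; combined with the existing partial lift, this kills the monodromy obstruction, so the lift extends uniquely to $U_n\setminus F^{-1}(P_F)$ and then across $F^{-1}(P_F)$ by Riemann's extension theorem. The resulting $\Phi_n$ is a local biholomorphism matching two proper branched covers of the same degree, hence a biholomorphism onto $U'_n$.

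The compatible family $\{\Phi_n\}$ assembles into a holomorphic $\Phi\colon{\cal B}_a(F)\to{\cal B}_0(H)$ satisfying $H\circ\Phi=\Phi\circ F$. Surjectivity is immediate from $\bigcup_n\Phi_n(U_n)=\bigcup_n U'_n={\cal B}_0(H)$, and injectivity is inherited from the $\Phi_n$ on the nested $U_n$.

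The hard part will be the inductive lifting step, specifically verifying that the postcritical matching of the second bullet is precisely what makes the monodromy obstruction vanish. Without it the pushforward cover would generically fail to be isomorphic to the $H$-cover and no extension could exist; with it, the extension across the branch locus and the final gluing are routine.
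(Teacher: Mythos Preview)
Your approach is viable but takes a genuinely different route from the paper. You extend $\Phi$ by iterated \emph{lifting of the conjugacy} through the branched covers $F$ and $H$, using the postcritical hypothesis to match branching data and kill monodromy. The paper instead extends $\Phi$ by \emph{lifting vector fields}: it sets $\xi_j\eqdef\Phi^*\theta_j$ near $a$, observes (from the postcritical hypothesis) that $\xi_j$ is tangent to the postcritical set of $F$, and then invokes Arnol'd's lemma to pull $\xi_j$ back by $F^{\circ n}$, obtaining a globally defined vector field on ${\cal B}_a(F)$ satisfying $\xi_j=k_j\cdot F^*\xi_j$. The flow ${\cal F}_t$ of $\xi=\sum_j\xi_j$ is then shown (via the Green functions ${\cal G}_{F,j}$ and properness of $F$) to be defined for all $t\le 0$ with ${\cal F}_t(x)\to a$, and the extension is given by the explicit formula $\Phi(x)=e^{-t}\cdot\Phi\circ{\cal F}_t(x)$ for $t\ll 0$. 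Injectivity and properness follow directly from this formula.

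What each buys: the paper's route sidesteps the covering-space bookkeeping entirely---no monodromy, no matching of ramification divisors, no Riemann extension---since Arnol'd's lemma handles the branch locus automatically at the level of vector fields, and it meshes naturally with the vector-field machinery already developed for Theorem~\ref{theo_local}. Your route is more direct and arguably more robust conceptually, but the inductive step is heavier than the sketch indicates: you need to propagate the statement $\Phi_{n-1}(P_F\cap U_{n-1})=P_H\cap U'_{n-1}$ (not just the critical-value sets, since critical points of $F$ in $U_n\setminus U_{n-1}$ can have images anywhere in $U_{n-1}$), check that the local branching orders along each component genuinely agree so the lift extends holomorphically across $F^{-1}(P_F)$, and verify that the partial lift on $U_{n-1}$ really pins down the monodromy on all of $U_n$ minus the preimage of the branch locus. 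These are all doable in this setting, but they are where the work lies.
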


In section \ref{applications}, we apply our results to a large class
of new examples of {\em{postcritically finite endomorphisms of
${\mathbb P}^n$}}. And finally, in section \ref{questions}, we conclude
with some remaining questions.

\subsection*{Acknowledgements} We would like to thank our colleagues for their constant support, and stimulating discussions. In particular, we thank John H. Hubbard, Curtis T. McMullen, John Milnor, and Saeed Zakeri. 

\section{The Local Result}\label{firstsection}

\subsection{Set Up}\label{setup}

Throughout section \ref{firstsection}, we will use the following notation. 
\begin{itemize}
\item $E$ is a $\C$-linear space of dimension $m\geq 1$.

\item $\langle\cdot|\cdot\rangle$ is a Hermitian product on $E$.

\item $\|\cdot\|$ is the associated norm.

\item $E=E_1\oplus \cdots \oplus E_p$ is the direct sum of $p\geq 1$ linear
spaces.

\item $(\pi_j:E\to E_j)_{j\in [1,p]}$ are the projections associated to the direct sum.

\item for $v\in E$, we denote $v_j\eqdef \pi_j (v)\in E_j$.

\item For $F:(E,0)\to (E,0)$ an analytic germ, we denote $F_j\eqdef \pi_j\circ F:E\to E_j$. 

\item if $(H_j:E_j\to E_j)_{j\in [1,p]}$ are maps, then
$H_1\oplus\cdots \oplus H_p$ is the map
\[E\ni v_1 + \cdots + v_p
\mapsto H_1(v_1) + \cdots + H_p(v_p)\in E.\]


\item $k_1,\ldots,k_p$ are integers greater than or equal to 2.

\end{itemize}
The word {\em adapted} in the definition \ref{def_adapted} refers to
this data.

Recall that a map $H:L\to L$ on a $\C$-linear space $L$ is
homogeneous of degree $k\geq 1$ if
\[\forall v\in L,\quad
H(v) = \phi(\underset{k\text{ times}}{\underbrace{v,\ldots,v}})\]
for some $k$-linear map $\phi:L^k\to L$. Equivalently, $H$ is
analytic and
\[\forall \lambda\in \C,~\forall v\in L,\quad
H(\lambda v) = \lambda^k H(v).\] The homogeneous map $H$ is
nondegenerate if $H^{-1}\{0\} = \{0\}$.

\begin{definition}\label{def_adapted}
An analytic germ $F:(E,0)\to (E,0)$ has an {\em adapted
superattracting fixed point} if for all $j\in [1,p]$, there is a
nondegenerate homogeneous map $H_j:E_j\to E_j$ of degree $k_j\geq 2$
such that
\[F_j(v) \underset{v\to 0}= H_j (v_j) + {\cal O}\bigl(\|v\|\cdot\|v_j\|^{k_j}\bigr).\]
The map $H_1\oplus \cdots \oplus H_p$ is the {\em
quasihomogeneous part} of $F$ at $0$ and $(k_1,\ldots,k_p)$ is its {\em multidegree}.
\end{definition}

Note that the derivative of a map at an adapted superattracting
fixed point necessarily vanishes. The particular form of $F_j$ implies the following invariance property. 
\begin{lemma}\label{prop_loctotinv}
The spaces $E_j^\top\eqdef\{v_j=0\}$ and $E_j$ are locally totally invariant under $F$; that is, there exists a neighborhood $V$ of $0$ such that 
\[
F^{-1}(E_j^\top)\cap V=E_j^\top\cap V\quad\text{and}\quad F^{-1}(E_j)\cap V=E_j\cap V.
\]
\end{lemma}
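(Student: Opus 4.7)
The plan is to establish the invariance of the hyperplanes $E_j^\top = \{v_j = 0\}$ first, and then obtain the invariance of the summands $E_j$ by intersecting them.

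For the forward inclusion $E_j^\top \cap V \subset F^{-1}(E_j^\top)$, I would observe that the defining expansion
\[ F_j(v) = H_j(v_j) + {\cal O}\bigl(\|v\|\cdot \|v_j\|^{k_j}\bigr) \]
immediately gives $F_j(v) = 0$ whenever $v_j = 0$, since both $H_j(0) = 0$ (by homogeneity of $H_j$) and the remainder vanishes to order $k_j \geq 2$ in $v_j$. So $F(v) \in E_j^\top$ whenever $v \in E_j^\top$, valid in all of $E$ (no smallness needed here).

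For the reverse inclusion $F^{-1}(E_j^\top) \cap V \subset E_j^\top$, the main point is a lower bound on $H_j$. Because $H_j$ is homogeneous of degree $k_j$ and nondegenerate (so nonvanishing on the compact unit sphere of $E_j$), there is a constant $c_j > 0$ with $\|H_j(v_j)\| \geq c_j \|v_j\|^{k_j}$ for every $v_j \in E_j$. Combined with a bound $\|F_j(v) - H_j(v_j)\| \leq C_j \|v\|\cdot \|v_j\|^{k_j}$ valid on a neighborhood of $0$, the reverse triangle inequality yields
\[ \|F_j(v)\| \geq \bigl(c_j - C_j \|v\|\bigr)\|v_j\|^{k_j}. \]
Choose $V$ small enough that $C_j \|v\| < c_j/2$ for all $v \in V$ and all $j\in[1,p]$; then $F_j(v) = 0$ forces $v_j = 0$, which is exactly what is needed.

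Finally, for $E_j$ itself, I would use the identification $E_j = \bigcap_{i \neq j} E_i^\top$. Since $F^{-1}(E_i^\top) \cap V = E_i^\top \cap V$ for each $i$, intersecting over $i \neq j$ gives $F^{-1}(E_j) \cap V = E_j \cap V$. The only real obstacle is the second inclusion, and it is handled by the nondegeneracy of $H_j$ as above; everything else is formal from the definition of an adapted superattracting fixed point.
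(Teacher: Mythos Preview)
Your proof is correct and follows the same overall architecture as the paper: establish the forward inclusion directly from the expansion, prove the reverse inclusion for $E_j^\top$ using nondegeneracy of $H_j$, and then obtain $E_j$ by intersecting the $E_i^\top$ for $i\neq j$.

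The one genuine difference is in the reverse inclusion. You argue quantitatively: nondegeneracy and homogeneity give $\|H_j(v_j)\|\geq c_j\|v_j\|^{k_j}$, and the reverse triangle inequality then forces $\|F_j(v)\|>0$ whenever $v_j\neq 0$ and $\|v\|$ is small. The paper instead argues by contradiction via analytic curves: if the germs $F^{-1}(E_j^\top)$ and $E_j^\top$ differed, one could select an analytic arc $\gamma:(\C,0)\to (F^{-1}(E_j^\top),0)$ with $\gamma_j\not\equiv 0$, and then compare orders of vanishing of $H_j\circ\gamma_j$ and $F_j\circ\gamma$ to reach a contradiction. Your route is more elementary---it avoids any appeal to curve selection on analytic sets---and in fact yields the uniform estimate $\|F_j(v)\|\asymp \|v_j\|^{k_j}$ near $0$, which is exactly what the paper later needs when constructing the dynamical Green functions. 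The paper's argument, on the other hand, is a template that would still work in settings where a clean pointwise lower bound is less immediate.
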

\begin{proof}
Let us first prove that $E_j^\top$ is locally totally invariant. First, if $v_j=0$, then $F_j(v)=0$. This shows that near $0$,  $E_j^\top\subseteq F^{-1}(E_j^\top)$. So if $E_j^\top$ were not locally totally invariant, we could find an analytic germ 
\[\gamma:(\C,0)\to (F^{-1}(E_j^\top),0)\quad\text{with}\quad \gamma_j\eqdef\pi_j\circ\gamma\not\equiv 0.\]
The order of vanishing $m$ of $\gamma_j$ at $0$ would be finite. Since $H_j$ is nondegenerate with degree $k_j$, the order of vanishing of $H_j\circ \gamma_j$ would be $k_j\cdot m$. So the order of vanishing of $F_j\circ \gamma=H_j\circ \gamma_j+o\bigl(\|\gamma_j\|^{k_j}\bigr)$ would be $k_j\cdot m<\infty$ which is a contradiction since by assumption $F_j\circ \gamma\equiv 0$. 

Next, note that
\[
E_j=\bigcap_{i\neq j} E_i^\top.
\]
Since the spaces $E_i^\top$ are locally totally invariant, it follows that the space $E_j$ is locally totally invariant. 
\end{proof}

We want to give necessary and sufficient conditions for the
existence of an isomorphism $\Phi:(E,0)\to (E,0)$ conjugating $F$ to
its quasihomogeneous part $H$, \ie{} such that $\Phi\circ F = H\circ
\Phi$. Such an isomorphism is called a Böttcher coordinate for $F$.

For  $v\in E$, there is a canonical isomorphism between the
$\C$-linear space $E$ and the tangent space $T_vE$. Together, these
isomorphisms induce a canonical bundle isomorphism between $TE$ and
$E\times E$. If $(v,w)\in E\times E$, we shall denote by $(v;w)$ the
corresponding tangent vector in $T_vE$. If $F:U\subseteq E\to E$ is
an analytic map, we denote $D_vF:T_vE\to T_{F(v)}E$ the derivative
of $F$ at $v\in U$. We denote by $DF:TU\to TX$ the bundle map
$(v;w)\mapsto D_vF(v;w)$. We shall denote by $F'(v;w)$ the vector in
$E$ corresponding to $DF(v;w) \in T_{F(v)}E$.

\begin{definition}
Let $\rad$ and $\theta_1$, \ldots, $\theta_p$ be the linear vector
fields $E\to TE$ defined by
\[\forall v\in E,\quad \rad(v) \eqdef  (v;v)\quad\text{and}\quad \theta_j(v) \eqdef  \bigl(v;v_j\bigr).\]
\end{definition}

Note that $\rad = \theta_1 + \cdots + \theta_p$.

\begin{definition}
A vector field $\xi$ is {\em asymptotically radial} if $\xi$ is
defined and analytic near $0\in E$ with $\xi(v)\underset{v\to 0}=\rad(v)+o\bigl(\|v\|\bigr)$.\\
A $p$-tuple of vector fields $(\xi_1,\ldots,\xi_p)$ is {\em
admissible} if
\begin{itemize}
\item  for all $j\in [1,p]$, $\xi_j$ is defined and analytic near $0$ in $E$, $\xi_j$ is tangent to $E_j$, $\xi_j$ vanishes when $v_j=0$ and  $\xi_j(v)\underset{v\to 0}=\theta_j(v)+o\bigl(\|v\|\bigr)$, and

\item for all
$i\in  [1,p]$ and all $j\in [1,p]$, the vector fields $\xi_{i}$ and $\xi_{j}$ commute.
\end{itemize}
\end{definition}

The tangency condition in theorem \ref{theo_local} requires some
explanation. We say that an analytic vector field $\zeta$ on an open
set $U$ is tangent to an analytic set $A\subseteq U$ if $\zeta(a)$
belongs to the tangent space $T_aA$ for every $a$ in the smooth part
of $A$.
We say that a germ of an analytic vector field $\zeta$ at $0$ is
tangent to the germ of the postcritical set of $F$ if there is a
neighborhood $U$ of $0$ such that
\begin{itemize}
\item $F$ and $\zeta$ are defined and analytic on $U$,

\item $F:U\to F(U)\subseteq U$ is proper and

\item the vector field $\zeta$ is tangent to the critical
value set of $F^{\circ n}:U\to F^{\circ n}(U)$ for all $n\geq 1$.
\end{itemize}

When $F$ is not postcritically finite, the postcritical set of $F$
is not analytic and the third condition involves tangency to {\em a
priori} infinitely many analytic sets.


\subsection{Quasihomogeneous Maps}\label{qhmaps}

Let $L$ be a $\C$-linear space. If $H:L\to L$ is a homogeneous map
of degree $k\geq 1$, then
\begin{equation}\label{eq_euleridentity}
DH\circ \rad=k\cdot \rad\circ H.
\end{equation}
This is known as Euler's identity. In fact, the converse is true. If
$H:(L,0)\to (L,0)$ is a germ of an analytic map, then $H$ is the
germ of a homogeneous map of degree $k\geq 1$ if and only if
$DH\circ \rad=k\cdot \rad\circ H$ near $0$. We shall adapt this
result to our setting as follows.

\begin{definition}
A map $H:E\to E$ is quasihomogeneous of multidegree
$(k_1,\ldots,k_p)$ if there are homogeneous maps $H_j:E_j\to E_j$ of
degree $k_j$ with $H=H_1\oplus\cdots\oplus H_p$.
\end{definition}

The following two lemmas about quasihomogeneous maps will be used later. 

\begin{lemma}\label{lemma_graphs}
Let $X\subset E$ be such that near $0$, the set $H^{-1}X$ coincides with the graph of a function $\varphi:(E_j^\top,0)\to (E_j,0)$.  Then $X=H^{-1}X=E_j^\top$ near $0$. 
\end{lemma}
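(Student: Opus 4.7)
The plan is to exploit the fact that $H_j$ is homogeneous of degree $k_j\geq 2$, hence invariant under multiplication of its argument by any $k_j$-th root of unity. This cyclic symmetry will force the function $\varphi$ to vanish identically.

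First I would note that, for any $k_j$-th root of unity $\omega$, the linear map $T_\omega:E\to E$ defined by $T_\omega(v)=(v-v_j)+\omega v_j$ (leaving all components except the $j$-th fixed, and scaling the $j$-th by $\omega$) satisfies $H\circ T_\omega=H$. Indeed, $H_i$ for $i\neq j$ depends only on $v_i$, and $H_j(\omega v_j)=\omega^{k_j}H_j(v_j)=H_j(v_j)$. In particular $T_\omega$ preserves $H^{-1}X$ set-theoretically, and hence preserves the graph $\Gamma$ of $\varphi$ near $0$.

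Next, because $\Gamma=\{u+\varphi(u):u\in E_j^\top\}$ is a graph, each $u\in E_j^\top$ near $0$ has a unique preimage in $\Gamma$ under the projection to $E_j^\top$. Applying $T_\omega$ to the point $u+\varphi(u)\in\Gamma$ yields $u+\omega\varphi(u)\in\Gamma$, still projecting to $u$. Uniqueness forces $\omega\varphi(u)=\varphi(u)$. Taking $\omega=e^{2\pi i/k_j}\neq 1$ (possible since $k_j\geq 2$) gives $\varphi\equiv 0$ near $0$, so $\Gamma=E_j^\top$ near $0$, and therefore $H^{-1}X=E_j^\top$ near $0$.

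To finish, I would pass from $H^{-1}X=E_j^\top$ near $0$ to $X=E_j^\top$ near $0$ using openness. The restriction of $H$ to $E_j^\top$ is $\bigoplus_{i\neq j}H_i$, a nondegenerate quasihomogeneous map $E_j^\top\to E_j^\top$ (using Lemma \ref{prop_loctotinv}), hence a finite, proper, open holomorphic map near $0$. On the one hand, $E_j^\top\subseteq H^{-1}X$ near $0$ together with openness of $H\big|_{E_j^\top}$ gives $X\supseteq E_j^\top$ in some neighborhood of $0$. On the other hand, any $x\in X$ near $0$ has a preimage $v$ near $0$ (since $H$ is open and surjective onto a neighborhood of $0$), and then $v\in H^{-1}X=E_j^\top$ near $0$, so $x=H(v)\in H(E_j^\top)\subseteq E_j^\top$.

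The only subtlety I anticipate is justifying the openness and local surjectivity of $H$ (and of $H\big|_{E_j^\top}$) at $0$; this should be routine from nondegeneracy: each $H_i$ is a proper finite holomorphic self-map of $E_i$ with $H_i^{-1}\{0\}=\{0\}$, so by standard branched-covering theory $H$ is open and locally surjective near $0$, and similarly for the restriction.
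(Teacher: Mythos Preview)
Your proof is correct and follows essentially the same route as the paper: the key step in both is the $k_j$-th root of unity symmetry $H\circ T_\omega=H$, which forces $\omega\varphi=\varphi$ and hence $\varphi\equiv 0$. The paper dispatches the passage from $H^{-1}X=E_j^\top$ to $X=E_j^\top$ in a single clause, whereas you spell out the openness and local surjectivity argument more carefully; this is a fair elaboration of what the paper leaves implicit.
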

\begin{proof}
Due to the structure of $H=H_j\oplus H_j^\top:E_j\oplus E_j^\top\to E_j\oplus E_j^\top$, and due to the homogeneity of $H_j$, we have that 
\[
H(e^{{2\pi i}/{k_j}}v_j+v_j^\top)=H_j(e^{{2\pi i}/{k_j}}v_j)+H_j^\top(v_j^\top)=H_j(v_j)+H_j^\top(v_j^\top)=H(v_j+v_j^\top).
\]
So 
\[
v_j+v_j^\top\in H^{-1}X \quad\iff\quad e^{{2\pi i}/{k_j}}v_j+v_j^\top\in H^{-1}X.
\]
Since near $0$, the set $H^{-1} X$ coincides with the graph of $\varphi$, we have 
\[
v_j=\varphi(v_j^\top)\quad\iff\quad e^{{2\pi i}/{k_j}}v_j=\varphi(v_j^\top)
\]
for $v_j$ sufficiently close to $0$. 
Thus $\varphi\equiv e^{{-2\pi i}/{k_j}}\varphi$ and so $\varphi$ vanishes identically near $0$. This shows that $H^{-1} X=E_j^\top$ near $0$, which implies that  $X=E_j^\top$ near $0$.
\end{proof}

\begin{lemma}\label{lemma_quasihomogeneous}
Let $H:(E,0)\to (E,0)$ be a germ of an analytic map. Then, $H$ is
the germ of a quasihomogeneous map of multidegree $(k_1,\ldots,k_p)$
if and only if \[\forall j\in [1,p],\quad DH\circ \theta_j = k_j
\cdot \theta_j\circ H.\]
\end{lemma}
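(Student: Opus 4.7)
My plan is to prove both directions componentwise, using a multi-graded Taylor expansion relative to the splitting $E=E_1\oplus\cdots\oplus E_p$ together with a partial Euler identity.

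First I would unwind the hypothesis. Writing $H^{(i)}\eqdef\pi_i\circ H$, both sides of $DH\circ\theta_j=k_j\cdot\theta_j\circ H$ are tangent vectors based at $H(v)$, with free parts in $E$. Projecting the free part onto $E_i$ turns the identity into the system
\[
D_vH^{(i)}(v_j)\;=\;k_j\,\delta_{ij}\,H^{(j)}(v),\qquad i,j\in[1,p].
\]
The forward direction is now immediate: if $H=H_1\oplus\cdots\oplus H_p$ with each $H_j:E_j\to E_j$ homogeneous of degree $k_j$, then $H^{(i)}(v)=H_i(v_i)$ depends only on $v_i$, so $D_vH^{(i)}(v_j)=0$ for $i\neq j$, and the diagonal equations reduce to the classical Euler identity \eqref{eq_euleridentity} applied to $H_i$.

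For the converse, I would expand each analytic germ $H^{(i)}$ in its multi-homogeneous pieces relative to the splitting: $H^{(i)}=\sum_\alpha P^{(i)}_\alpha$ indexed by $\alpha=(a_1,\ldots,a_p)\in\N^p$, where
\[P^{(i)}_\alpha(\lambda_1 v_1+\cdots+\lambda_p v_p)\;=\;\lambda_1^{a_1}\cdots\lambda_p^{a_p}\,P^{(i)}_\alpha(v).\]
Differentiating this identity in $\lambda_j$ at $\lambda_1=\cdots=\lambda_p=1$ yields the partial Euler identity $D_v P^{(i)}_\alpha(v_j)=a_j\,P^{(i)}_\alpha(v)$. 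Substituting into the system and matching multi-degrees, the off-diagonal equation ($i\neq j$) gives $a_j\,P^{(i)}_\alpha\equiv 0$ for every $\alpha$; imposing this for each $j\neq i$ forces $P^{(i)}_\alpha$ to vanish unless the multidegree $\alpha$ is supported solely on its $i$-th slot. Hence $H^{(i)}(v)$ depends only on $v_i$.

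Once this is established, the diagonal equation reduces to the classical Euler identity for the germ $H^{(i)}|_{E_i}:(E_i,0)\to(E_i,0)$, and the converse of Euler's theorem recalled just before the lemma yields homogeneity of degree $k_i$. Summing over $i$ gives $H=H^{(1)}\oplus\cdots\oplus H^{(p)}$, quasihomogeneous of multidegree $(k_1,\ldots,k_p)$. The only genuinely substantive step is the multi-graded decomposition combined with the partial Euler identity; everything else is bookkeeping and an appeal to the classical one-variable converse, so I do not anticipate any serious obstacle.
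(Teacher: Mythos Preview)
Your argument is correct. The paper proves the converse by a different but closely related route: instead of expanding $H^{(i)}$ into multi-homogeneous pieces and matching coefficients, it fixes a point $v$, introduces scaling variables $(\lambda_1,\ldots,\lambda_p)$, and studies $\chi_j(\lambda_1,\ldots,\lambda_p)\eqdef\pi_j\circ H(\lambda_1 v_1+\cdots+\lambda_p v_p)$ directly. Using the hypothesis, it computes $\partial\chi_j/\partial\lambda_i$ and finds it vanishes for $i\neq j$, so $\chi_j$ depends only on $\lambda_j$; the remaining ODE $\partial\chi_j/\partial\lambda_j=(k_j/\lambda_j)\chi_j$ then forces $\chi_j(\lambda_j)=\lambda_j^{k_j}\chi_j(1)$, yielding homogeneity. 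In effect the paper derives the scaling behavior pointwise via an ODE, while you pre-decompose by multi-degree and read off the same conclusion from the partial Euler identities term by term. Your approach is algebraically cleaner and makes the structure transparent; the paper's is more self-contained in that it does not invoke the existence and uniqueness of the multi-homogeneous Taylor decomposition, deriving everything from calculus on the scaling variables.
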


\begin{proof}
Assume $H=H_1\oplus\cdots\oplus H_p$ with $H_i:E_i\to E_i$
homogeneous of degree $k_i\geq 1$. Then, $H_i(v_i) =
\phi_i(v_i,\ldots,v_i)$ with $\phi_i:E_i^{k_i}\to E_i$ a symmetric
$k_i$-linear map and so, $H_i'(v_i;w_i) = k_i\cdot
\phi_i(v_i,\ldots,v_i,w_i)$. Thus, for all $v\eqdef v_1+\cdots +
v_p\in E$,
\[
H'(v;v_j) = \sum_{i=1}^p k_i\cdot
\phi_i\bigl(v_i,\ldots,v_i,\pi_i(v_j)\bigr)  = k_j\cdot
\phi_j(v_j,\ldots,v_j) = k_j\cdot H_j(v_j).
\]
This shows that $DH\circ \theta_j(v)= k_j\cdot \theta_j\circ H(v)$.

Conversely, assume $DH\circ \theta_j = k_j \cdot \theta_j\circ H$
for all $j\in [1,p]$. In other words, \[\forall j\in [1,p],\quad
H'(v;v_j) = k_j \cdot \pi_j\circ H(v).\] Let $(U_j\subseteq
E_j)_{j\in [1,p]}$ be neighborhoods of $0$ such that
\begin{itemize}
\item $H$ is analytic on $U\eqdef U_1+\ldots+ U_p$ and

\item if $v_j\in U_j$ and $|\lambda|\leq 1$, then $\lambda v_j\in U_j$
\end{itemize}
Let $v=v_1+\cdots+v_p$ be a point in $U$. Fix $j\in [1,p]$. The map
\[\chi_j:(\lambda_1,\ldots,\lambda_p)\mapsto \pi_j\circ H(\lambda_1
v_1+\ldots +\lambda_p v_p)\] is defined and analytic in a
neighborhood of the closed polydisk $\overline \D^p$. In addition,
for all $i\in [1,p]$,
\begin{align*}
\frac{\partial \chi_j}{\partial
\lambda_i}(\lambda_1,\ldots,\lambda_p) & = \pi_j \circ
H'(\lambda_1 v_1+\ldots +\lambda_p v_p;v_i) \\
& = \frac{1}{\lambda_i} \cdot \pi_j \circ H' (\lambda_1 v_1+\ldots +\lambda_p v_p;\lambda_i v_i)  \\
&= \frac{k_i}{\lambda_i} \cdot \pi_j \circ \pi_i\circ H (\lambda_1
v_1+\ldots +\lambda_p v_p).
\end{align*}
If $i\neq j$, then $\partial \chi_j/\partial \lambda_i = 0$, which
shows that $\chi_j$ only depends on $\lambda_j$. In addition
\begin{equation}\label{eq_phi} \frac{\partial \chi_j}{\partial
\lambda_j} = \frac{k_j}{\lambda_j}\cdot \pi_j\circ
\chi_j.\end{equation}

Let $H_j:(E_j,0)\to (E_j,0)$ be the restriction of $\pi_j\circ H$ to
$E_j$. The previous discussion shows that
\[\forall (\lambda_1,\ldots,\lambda_p)\in \overline \D^p,\quad \chi_j(\lambda_1,\ldots ,\lambda_p) =\chi_j(0,\ldots ,0,\lambda_j,0,\ldots,0) =
H_j(\lambda_j v_j).\] In particular, taking
$\lambda_1=\cdots=\lambda_p=1$, we have that $\pi_j\circ H(v) =
H_j(v_j)$, and so $H= H_1\oplus \cdots \oplus H_p$.

Finally, the differential equation (\ref{eq_phi}) implies that
\[\frac{\partial H_j(\lambda v_j)}{\partial \lambda} =
\frac{k_j}{\lambda} H_j(\lambda v_j).\] Consider the map
\[\psi:\lambda\mapsto \lambda^{-k_j}  H_j(\lambda
v_j)\] which is defined and analytic in a neighborhood of $\overline
\D$. Its derivative satisfies
\[\psi'(\lambda) = -k_j \lambda^{-k_j-1} H_j(\lambda v_j) +
\lambda^{-k_j}\frac{k_j}{\lambda} H_j(\lambda v_j) =0.\] As a
consequence, $\psi(\lambda) = \psi(1)$ for all $\lambda \in
\overline \D$. So $H_j(\lambda v_j) = \lambda^{k_j} H_j(v_j)$ and
$H_j$ is the germ of a homogeneous map of degree $k_j$.
\end{proof}

\subsection{Linearizability of analytic vector fields}\label{linvf}

The following result is due to Poincaré. We include a proof for
completeness.

\begin{lemma}[Poincaré]
Any asymptotically radial vector field $\xi$ is linearizable: there
is a germ of an analytic map $\Phi:(E,0)\to (E,0)$ such that
$\Phi(0)=0$, $D_0\Phi=\id$ and $D\Phi \circ \xi = \rad\circ \Phi$.
\end{lemma}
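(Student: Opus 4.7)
The plan is to linearize $\xi$ by building $\Phi$ directly from its flow. Since $\xi$ is analytic and $\xi(v)=\rad(v)+o(\|v\|)$, we have $D_0\xi=\id$, hence $\xi$ is of the form $v\mapsto v+R(v)$ with $R(v)=\mathcal O(\|v\|^2)$ (here I tacitly identify $\xi(v)\in T_v E$ with its component in $E$, following the convention set above). Consequently $0$ is a backward-attracting equilibrium for $\xi$, and the ``linear model'' $\rad$ has flow $\sigma_t(v)=e^t v$. If $\Phi$ is the conjugacy we seek, it must satisfy $\Phi\circ\psi_t=e^t\Phi$ for every $t\leq 0$, where $\psi_t$ is the local backward flow of $\xi$. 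Combined with $\psi_t(v)\to 0$ as $t\to-\infty$ and $D_0\Phi=\id$, this forces the ansatz
\[\Phi(v)\eqdef\lim_{t\to-\infty}e^{-t}\psi_t(v).\]

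First I would verify that on a small ball $B_r\subset E$ centered at $0$, the orbit $\psi_t(v)$ is defined for all $t\leq 0$ and obeys an estimate $\|\psi_t(v)\|\leq 2e^t\|v\|$. This is a routine Gronwall argument using $\|R(w)\|\leq C\|w\|^2$ near $0$. Next I would prove that the defining limit exists. Setting $f(t)\eqdef e^{-t}\psi_t(v)$, a direct differentiation gives
\[f'(t)=-e^{-t}\psi_t(v)+e^{-t}\xi\bigl(\psi_t(v)\bigr)=e^{-t}R\bigl(\psi_t(v)\bigr),\]
whose norm is bounded by $4Ce^{t}\|v\|^2$. This is integrable on $(-\infty,0]$, so
\[\Phi(v)=v-\int_{-\infty}^{0}e^{-s}R\bigl(\psi_s(v)\bigr)\,ds\]
converges uniformly on $B_{r/2}$. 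Being a uniform limit of analytic maps, $\Phi$ is analytic; the same estimate shows $\Phi(v)=v+\mathcal O(\|v\|^2)$, so $\Phi(0)=0$ and $D_0\Phi=\id$.

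Finally I would read off the conjugacy. For any $s\leq 0$ and any $v$ small,
\[\Phi\bigl(\psi_s(v)\bigr)=\lim_{t\to-\infty}e^{-t}\psi_{t+s}(v)=e^{s}\lim_{u\to-\infty}e^{-u}\psi_{u}(v)=e^{s}\Phi(v),\]
i.e.\ $\Phi\circ\psi_s=\sigma_s\circ\Phi$. Differentiating this identity at $s=0$ produces $D\Phi\circ\xi=\rad\circ\Phi$, as desired.

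I expect the main obstacle to be the integrability estimate in the second step: it relies on the combination of the quadratic vanishing $\|R(w)\|\leq C\|w\|^2$ with the sharp exponential contraction $\|\psi_t(v)\|\lesssim e^t\|v\|$, which together yield $\|f'(t)\|\lesssim e^{t}\|v\|^2$. This is ultimately a manifestation of the fact that $D_0\xi=\id$ has all eigenvalues equal to $1$, placing us in the Poincar\'e domain with no resonances (the relation $1=\sum n_i\cdot 1$ with $\sum n_i\geq 2$ is impossible), so no small divisors appear and the construction converges without any arithmetic condition.
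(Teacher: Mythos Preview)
Your proposal is correct and follows essentially the same approach as the paper: both construct $\Phi$ as $\lim_{t\to-\infty}e^{-t}\psi_t$, where $\psi_t$ is the backward flow of $\xi$, after first establishing that this flow is defined and contracts toward $0$. The only cosmetic difference is that the paper packages the estimates through the radial function $J(v)=\mathrm{Re}\langle\xi(v)\mid v\rangle/\|v\|^2$ and then extracts $\Phi$ as a limit of the normal family $(e^{-t}\psi_t)_{t\leq 0}$, whereas you exploit analyticity to write $R(v)=\mathcal O(\|v\|^2)$ and prove the limit exists directly via the integrable bound on $f'(t)$; both routes yield the same object.
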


\begin{proof}
Let $J$ be defined in a neighborhood of $0$ in $E$ by
\[J(v) \eqdef \frac{{\rm Re}\bigl\langle \xi(v)|v\bigr\rangle}{\|v\|^2}.\]
Since $\xi$ is asymptotically radial, we have that
\[J(v) \underset{v\to 0}= 1 + {\cal O}\bigl(\|v\|\bigr).\] So,
there are constants $C$ and $r>0$ such that
\[\forall v\in B(0,r),\quad \bigl|J(v)-1\bigr|\leq C\|v\|\leq 1/2.\]
In particular, $\xi$ is outward pointing on the boundary of
$B(0,r)$. Let ${\cal F}_t(v)={\cal F}(t,v)$ be the flow of the
vector field $\xi$. Since $\xi$ is outward pointing on the boundary
of $B(0,r)$, the map ${\cal F}_t$ is defined and analytic on
$B(0,r)$ for all $t\leq 0$.

For $t\leq 0$, set $G_t\eqdef \log\|{\cal F}_t\|$. Then, for $t\leq
0$,
\[\frac{\partial G_t}{\partial t} = J\circ {\cal F}_t \geq 1/2\]
and so, $G_t\leq G_0+t/2$ and $\|{\cal F}_t\|\leq re^{t/2}$. In
addition,
\[\left|\frac{\partial (G_t-t)}{\partial t}\right| = |J\circ {\cal F}_t -1|\leq
C\bigl\|{\cal F}_t\bigr\| \leq Cr e^{t/2}.\] So,
\[(G_t-t)-(G_0-0)\leq \int_{t}^0 C re^{u/2}\  {\rm d}u \leq
2Cr\quad\text{and therefore}\quad e^{-t}\|{\cal F}_t\|\leq
re^{2Cr}.\]

For $t\leq 0$, let $\Phi_t:B(0,r)\to E$ be the map defined by
\[\Phi_t\eqdef  e^{-t}\cdot {\cal F}_t.\]
Note that $\Phi_t(0) = 0$, $D_0\Phi_t={\rm id}$. Since $D{\cal F}_t
= \xi\circ {\cal F}_t$, we see that
\[D\Phi_t\circ \xi = e^{-t} \cdot\xi\circ {\cal F}_t = e^{-t}\cdot \bigl(
\rad\circ {\cal F}_t + o\big(\|{\cal F_t\|}\bigr)\bigr) = \rad \circ
\Phi_t + o\bigl(\|\Phi_t\|\bigr).\] The previous estimates show that
the family $(\Phi_t)_{t\leq 0}$ is uniformly bounded on $B(0,r)$ by
$r e^{2Cr}$. Thus, it is normal. Any limit value $\Phi$  as $t\to
-\infty$ linearizes $\xi$:
\[D\Phi\circ \xi = \rad\circ \Phi.\qedhere\]
\end{proof}

\begin{corollary}\label{coro_simultlin}
Let $(\xi_1,\ldots,\xi_p)$ be an admissible $p$-tuple of germs of
vector fields. Then, the $\xi_j$ are simultaneously linearizable,
\ie{}  there is a germ of an analytic map $\Phi:(E,0)\to (E,0)$
such that $\Phi(0)=0$, $D_0\Phi=\id$ and $D\Phi \circ \xi_j =
\theta_j\circ \Phi$ for all $j\in [1,p]$.
\end{corollary}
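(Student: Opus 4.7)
The plan is to first apply Poincaré's lemma to the sum $\xi \eqdef \xi_1+\cdots+\xi_p$ in order to produce a candidate linearizer $\Phi$, and then to exploit the pairwise commutativity of the $\xi_j$ in order to show that this same $\Phi$ simultaneously straightens each $\xi_j$ into $\theta_j$.

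First I will observe that $\sum_j \theta_j = \rad$ and $\xi_j(v) = \theta_j(v) + o(\|v\|)$, so $\xi$ is asymptotically radial. Poincaré's lemma from the previous subsection then supplies a germ of analytic biholomorphism $\Phi:(E,0)\to (E,0)$, tangent to the identity at $0$, with $D\Phi\circ\xi=\rad\circ\Phi$. Setting $\eta_j \eqdef \Phi_*\xi_j$, pushforward under a biholomorphism preserves Lie brackets, so the $\eta_j$ commute pairwise and $\eta_1+\cdots+\eta_p=\rad$; in particular each $\eta_j$ commutes with $\rad$. Since $D_0\Phi=\id$, the asymptotic $\eta_j(v)=\theta_j(v)+o(\|v\|)$ is preserved. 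The task reduces to showing that any germ of analytic vector field $\eta$ commuting with $\rad$ and satisfying such an asymptotic must equal $\theta_j$.

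For this, write $\eta_j(v)=\bigl(v;f_j(v)\bigr)$ with $f_j$ analytic on some ball $B(0,r)$. Commutation with $\rad$ is equivalent to invariance under its flow $\Lambda_s(v)=e^s v$, which unravels to the identity $f_j(e^{-s}v)=e^{-s}f_j(v)$ for $s\geq 0$ and $v\in B(0,r)$. Fixing $v$ and viewing both sides as analytic functions of $\lambda=e^{-s}$, agreement on $(0,1]$ extends them to agreement on a disk around $0$ in $\C$; matching Taylor coefficients then kills every homogeneous component of $f_j$ of degree different from $1$, so $f_j$ is linear. The asymptotic $f_j(v)=v_j+o(\|v\|)$ then pins down the linear map to $f_j=\pi_j$, giving $\eta_j=\theta_j$, i.e.\ $D\Phi\circ\xi_j=\theta_j\circ\Phi$ for every $j$.

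The main obstacle I anticipate is the last linearity step. The radial flow $\Lambda_s$ only contracts into the domain of $\eta_j$ in one time direction, so the commutation identity gives $\rad$-invariance only for $s\geq 0$ and does not a priori produce a $\C$-homogeneity of $f_j$ valid on all of $E$. The analytic continuation in $\lambda=e^{-s}$ (or, equivalently, a direct inspection of the power series of $f_j$ showing that commutation with $\rad$ annihilates every homogeneous component of degree $\neq 1$) is what converts this one-sided invariance into honest linearity; by contrast, the remaining steps are formal manipulations with Lie brackets and with leading-order asymptotics.
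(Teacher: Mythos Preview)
Your proof is correct and follows essentially the same route as the paper: sum the $\xi_j$ to get an asymptotically radial vector field, linearize it via Poincar\'e's lemma, and then observe that the pushforwards $\Phi_*\xi_j$ commute with $\rad$ and hence must be linear, forcing $\Phi_*\xi_j=\theta_j$ by comparison of linear parts. The paper simply asserts the linearity step (``It follows that they are linear vector fields'') without justification, whereas you supply the flow-invariance/power-series argument that fills this in.
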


\begin{proof}
The germ of the vector field $\xi\eqdef \xi_1 + \cdots + \xi_p$ is
asymptotically radial, thus linearizable. Let $\Phi:(E,0)\to (E,0)$
be the linearizer. The vector fields $D\Phi\circ \xi_j$ commute with
$D\Phi \circ\xi = \rad$. It follows that they are linear vector fields
and so, $D\Phi \circ\xi_j = \theta_j$.
\end{proof}

\subsection{Liftable Vector Fields}\label{liftvf}

\begin{definition}
Let $F:U\to V$ be an analytic map. An analytic vector field $\xi$ on
$V$ is {\em liftable} if there is an analytic vector field $\zeta$
on $U$ which satisfies $DF \circ \zeta = \xi\circ F$. We say that
$\zeta$ lifts $\xi$.
\end{definition}

The critical point set of $F$ is the set ${\cal C}_F$ of points
$x\in U$ for which $D_xF$ is not invertible. The critical value set
of $F$ is ${\cal V}_F\eqdef F({\cal C}_f)$. If $\xi$ is a liftable
vector field on $V$ and if $\zeta$ lifts $\xi$, then for all $x\in
U-{\cal C}_F$, we have
\[\zeta(x) = (D_xF)^{-1} \xi\circ F(x).\]
Thus, when $F$ has discrete fibers, a liftable vector field $\xi$ on $V$ admits
a unique lift $\zeta$ on $U$ and we shall use the notation
\[F^* \xi \eqdef \zeta.\]



\begin{lemma}[Arnol'd]
Let $F:U\to V$ be an analytic map with discrete fibers. Let $\xi$ be
a vector field which is analytic on $V$ and tangent to the critical
value set ${\cal V}_F$. Then, $\xi$ is liftable and $F^* \xi$ is
tangent to ${\cal C}_F$.
\end{lemma}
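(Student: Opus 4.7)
My plan is to define $\zeta = F^{*}\xi$ by an explicit formula away from the critical locus, to extend it holomorphically across $\mathcal{C}_F$ via a local normal form at generic smooth points, and to read off tangency to $\mathcal{C}_F$ from the resulting formula. On $U \setminus \mathcal{C}_F$ I set $\zeta(x) \eqdef (D_xF)^{-1}\bigl(\xi\circ F(x)\bigr)$; this is the only possible lift and is analytic there. Since $(D_xF)^{-1} = \operatorname{adj}(D_xF)/\det D_xF$, the field $\zeta$ extends a priori as a meromorphic vector field on $U$ whose polar locus is contained in $\mathcal{C}_F$, and the task reduces to showing that this polar locus is empty.

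Because $F$ has discrete fibers, $\dim U = \dim V =: n$ and $\mathcal{C}_F$ is a hypersurface (or empty); by Riemann's removable singularity theorem combined with Hartogs' codimension-two extension, it suffices to show that $\zeta$ is locally bounded near a generic smooth point $x_0 \in \mathcal{C}_F$, the remaining exceptional set being of codimension $\geq 2$ in $U$. A fiber-dimension argument using the discrete-fibers hypothesis shows that generically on each irreducible component of $\mathcal{C}_F$ one has $\operatorname{rank}(DF) = n-1$ and $F|_{\mathcal{C}_F}$ is an immersion: otherwise $F$ would collapse a hypersurface to an image of dimension $\leq n-2$, which is ruled out since the preimage of any set has the same dimension as the set itself. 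At such an $x_0$ I choose coordinates in which $\mathcal{C}_F = \{z_1=0\}$, $\mathcal{V}_F = \{w_1=0\}$, $F|_{\mathcal{C}_F}$ is the identity in the transverse coordinates, and
\[
F(z_1,z_2,\ldots,z_n) = (z_1^k, z_2, \ldots, z_n)
\]
for some $k \geq 2$; the latter normal form is obtained by writing $F_1 = z_1^k u$ with $u$ a unit (forced by the fact that $\det DF$ vanishes to finite order exactly on $\{z_1=0\}$) and absorbing $u$ via the coordinate change $\tilde z_1 = z_1\, u^{1/k}$.

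In these coordinates the tangency of $\xi$ to $\mathcal{V}_F$ forces $\xi = w_1\, a(w)\,\partial_{w_1} + \sum_{i\geq 2} b_i(w)\,\partial_{w_i}$, and solving $DF\cdot\zeta = \xi\circ F$ yields
\[
\zeta = \frac{z_1}{k}\, a\bigl(F(z)\bigr)\,\partial_{z_1} + \sum_{i\geq 2} b_i\bigl(F(z)\bigr)\,\partial_{z_i},
\]
which is holomorphic near $x_0$ and whose $\partial_{z_1}$-component vanishes on $\mathcal{C}_F$; holomorphicity and tangency to $\mathcal{C}_F$ then follow at generic $x_0$, and propagate to all of $U$ by Hartogs and analytic continuation. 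The main obstacle is the normal form step: the discrete-fibers hypothesis is essential, since without it the fiber-dimension argument breaks and the conclusion can fail, as in $F(x,y)=(x^2y,y^2)$, whose fiber over the origin is $1$-dimensional and for which the tangent field $\partial_v$ on $\mathcal{V}_F=\{u=0\}$ admits no holomorphic lift.
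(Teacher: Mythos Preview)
Your proof is correct and follows essentially the same route as the paper's: define the lift outside $\mathcal{C}_F$, reduce via Hartogs to generic smooth points of $\mathcal{C}_F$, put $F$ in the local normal form $(z_1,\ldots,z_n)\mapsto(z_1^k,z_2,\ldots,z_n)$ there, and read off both holomorphicity and tangency from the explicit formula for the lift. Your write-up adds a few details the paper leaves implicit (the adjugate/meromorphic viewpoint, the fiber-dimension justification of generic rank $n-1$, the explicit $k$-th root coordinate change, and the counterexample when fibers are not discrete), but the architecture is identical.
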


\begin{proof}
The vector field $F^*\xi$ is well defined outside the critical point set.
According to a lemma of Hartogs, it is enough to show that $F^*\xi$
extends analytically outside a subset of $U$ of codimension $2$ in
order to know that it extends globally.

Since $F$ has discrete fibers, the critical point set ${\cal C}_F$ is
either empty or has codimension $1$. Thus, outside a codimension $2$
subset of $U$, the critical point set ${\cal C}_F$ is smooth. Moreover, it
follows from the Constant Rank Theorem that for generic $x\in{\cal
C}_F$, the kernel of $D_xF$ does not intersect the tangent space to
${\cal C}_F$ at $x$.

Thus, near generic points in ${\cal C}_F$, the map $F$ may be
locally expressed as
\[(x_1,\ldots,x_{m-1},x_m)\mapsto
(y_1,\ldots,y_{m-1},y_m) = (x_1,\ldots,x_{m-1},x_m^k)\] for some
integer $k\geq 2$. Since the vector field $\xi$ is tangent to the
critical value set $\{y_m=0\}$, it is of the form
\[\xi =\xi_1\frac{\partial }{\partial y_1} + \ldots
+\xi_{m-1}\frac{\partial }{\partial y_{m-1}} + y_m \xi_m
\frac{\partial }{\partial y_m}\] and \begin{align*}F^*\xi &=
(\xi_1\circ F) \frac{\partial }{\partial x_1} + \ldots
+(\xi_{m-1}\circ F) \frac{\partial }{\partial x_{m-1}} +
\left(\frac{x_m^k}{kx_m^{k-1}}
\xi_m\circ F\right) \frac{\partial }{\partial x_m}\\
&= (\xi_1\circ F) \frac{\partial }{\partial x_1} + \ldots
+(\xi_{m-1}\circ F) \frac{\partial }{\partial x_{m-1}} +
\left(\frac{1}{k} x_m \xi_m\circ F \right) \frac{\partial }{\partial
x_m}\end{align*} which clearly extends analytically through the
critical set $\{x_m=0\}$ and is tangent to the critical set.
\end{proof}

\begin{lemma}\label{newlemma}
Let $F:U\to V$ be an analytic map with discrete fibers, let $\xi$ be a liftable vector field on $V$, and let $\zeta\eqdef F^*\xi$ be its lift to $U$. Let $\phi(t,z)$ be the flow of $\xi$, and $\psi(t,z)$ be the flow of $\zeta$. 
\begin{enumerate}
\item\label{newlemma1} For all $z\in U$ and for $t$ sufficiently small, we have 
\[
F(\psi(t,z))=\phi(t,F(z)).
\]

\item\label{newlemma2} If $\xi\circ F (z)=0$, then $\zeta(z)=0$.

\item\label{newlemma3} If $\xi$ is tangent to an analytic set $A\subseteq U$, then $\zeta$ is tangent to $F^{-1}(A)$. 
\end{enumerate}

\end{lemma}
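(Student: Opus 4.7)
The plan is to treat all three parts from the general principle that $F$ intertwines the flows of $\zeta$ and $\xi$, deducing (\ref{newlemma2}) and (\ref{newlemma3}) as corollaries of (\ref{newlemma1}). For (\ref{newlemma1}), I would define $\gamma(t)\eqdef F(\psi(t,z))$ on a small interval around $0$, and differentiate using the chain rule together with the defining relation $DF\circ \zeta = \xi\circ F$ of the lift:
\[
\gamma'(t) = D_{\psi(t,z)}F\cdot \zeta\bigl(\psi(t,z)\bigr) = \xi\bigl(F(\psi(t,z))\bigr) = \xi(\gamma(t)),
\]
with initial condition $\gamma(0) = F(z)$. Since $t\mapsto \phi(t,F(z))$ solves the same initial value problem, uniqueness for analytic ODEs yields $\gamma(t) = \phi(t,F(z))$ for $t$ sufficiently small.

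For (\ref{newlemma2}), assume $\xi(F(z)) = 0$, so that $F(z)$ is a stationary point of the flow $\phi$ and $\phi(t,F(z)) = F(z)$ for all small $t$. By part (\ref{newlemma1}), $F(\psi(t,z)) = F(z)$ for all small $t$, i.e., $\psi(t,z)$ lies in the fiber $F^{-1}\bigl(F(z)\bigr)$. Since $F$ has discrete fibers and $t\mapsto \psi(t,z)$ is a continuous curve taking the value $z$ at $t=0$, the connectedness of $(-\varepsilon,\varepsilon)$ forces it to be constant; differentiating at $t=0$ then gives $\zeta(z) = 0$.

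For (\ref{newlemma3}) (reading the ambient set as $A\subseteq V$, so that the hypothesis ``$\xi$ is tangent to $A$'' is meaningful on the domain of $\xi$), the plan is to invoke the classical fact that an analytic vector field tangent to an analytic set has a flow preserving that set, i.e., $\phi(t,a)\in A$ whenever $a\in A$ and $t$ is small. Applying part (\ref{newlemma1}), if $z\in F^{-1}(A)$ then $F(\psi(t,z)) = \phi(t,F(z)) \in A$, so $\psi(t,z) \in F^{-1}(A)$ for all small $t$; thus $F^{-1}(A)$ is flow-invariant under $\psi$, and differentiating at $t=0$ at any smooth point $z$ of $F^{-1}(A)$ yields $\zeta(z)\in T_z F^{-1}(A)$, which is the required tangency. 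The main obstacle is this flow-invariance of $A$ under $\phi$: it is immediate at smooth points of $A$ from the definition of tangency plus ODE uniqueness, and extends to singular points by analytic continuation or by a density argument once one observes that both $\phi(t,\cdot)$ and $A$ are analytic, but this step deserves either a careful local computation or an explicit reference.
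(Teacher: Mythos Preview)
Your proof is correct and follows the same overall structure as the paper's: establish the flow-intertwining relation (\ref{newlemma1}) first, then read off (\ref{newlemma2}) and (\ref{newlemma3}) as consequences of flow-invariance. The one genuine difference is in how (\ref{newlemma1}) is proved. The paper does not compute directly: instead it observes that at any non-critical point $z\in U-{\cal C}_F$ the map $F$ is a local isomorphism sending $\zeta$ to $\xi$, hence conjugating their flows, and then extends the identity $F(\psi(t,z))=\phi(t,F(z))$ across ${\cal C}_F$ by analytic continuation in $z$. Your direct ODE argument---differentiate $F(\psi(t,z))$ and invoke uniqueness---is more elementary and works uniformly for all $z$, avoiding both the case split and the continuation step. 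For (\ref{newlemma2}) and (\ref{newlemma3}) the paper proceeds exactly as you do; it simply asserts the fact you flag (tangency of $\xi$ to $A$ implies $\phi$-invariance of $A$) without further comment, so your remark that this deserves care at singular points is well taken. Your reading $A\subseteq V$ is also the intended one.
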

\begin{proof}
If $z\in U-{\cal{C}}_F$, then $F:(U,z)\to (V,F(z))$ is a local isomorphism.  It sends the vector field $\zeta$ to the vector field $\xi$. Thus it conjugates their flows, and the equality in part (\ref{newlemma1}) holds for $z\in U-{\cal{C}}_F$ and $t$ is sufficiently small. If $z\in {\cal{C}}_F$, it holds by analytic continuation (with respect to $z$).  

Parts (\ref{newlemma2}) and (\ref{newlemma3}) follow immediately since when the flow of $\xi$ preserves an analytic set $A$ (which may be reduced to a point if $\xi$ vanishes at this point), then the flow of $\zeta$ preserves the analytic set $F^{-1}(A)$. 
\end{proof}

We shall now study how admissible $p$-tuple of vector fields behave
under pullback.

\begin{lemma}\label{lemma_liftadmissible}
Assume $F:(E,0)\to (E,0)$ is an analytic germ having an adapted
superattracting fixed point. Let $(\xi_1,\ldots,\xi_p)$ be an
admissible $p$-tuple of liftable vector fields. Then, $(k_1\cdot F^*
\xi_1,\ldots, k_p \cdot F^* \xi_p)$ is an admissible $p$-tuple
of vector fields.
\end{lemma}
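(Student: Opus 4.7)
Set $\zeta_j \eqdef k_j F^*\xi_j$. We verify in turn: analyticity of $\zeta_j$ near $0$, tangency to $E_j$, vanishing when $v_j = 0$, the asymptotic condition $\zeta_j(v) = \theta_j(v) + o(\|v\|)$, and pairwise commutativity of the $\zeta_j$. The first three properties are essentially structural. Analyticity is immediate from the liftability hypothesis. Since $\xi_j$ is tangent to $E_j$, Lemma \ref{newlemma}(\ref{newlemma3}) yields that $F^*\xi_j$ is tangent to $F^{-1}(E_j)$, which by Lemma \ref{prop_loctotinv} coincides locally with $E_j$. For $v \in E_j^\top$ near $0$, Lemma \ref{prop_loctotinv} gives $F(v) \in E_j^\top$, so $\xi_j \circ F(v) = 0$ by admissibility of $\xi_j$, and Lemma \ref{newlemma}(\ref{newlemma2}) then forces $F^*\xi_j(v) = 0$.

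For pairwise commutativity, note first that $F$ has discrete fibers near $0$: the local total invariance of each $E_j^\top$ (Lemma \ref{prop_loctotinv}) makes the origin isolated in $F^{-1}(0)$. On the dense open complement of the critical locus ${\cal C}_F$, $F$ is a local biholomorphism and pullback preserves Lie brackets, so $[F^*\xi_i, F^*\xi_j] = F^*[\xi_i, \xi_j] = 0$ there; this identity extends by analyticity to the full domain. Multiplying by the constants $k_i$ and $k_j$ does not affect the bracket.

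The main obstacle is the asymptotic condition. Decompose $F = H + R$ with $R_j(v) = {\cal O}(\|v\|\cdot\|v_j\|^{k_j})$ and write $F^*\xi_j(v) = (v; W_j(v))$; the goal is $W_j(v) = v_j/k_j + o(\|v\|)$. Quasihomogeneity of $H$ (Lemma \ref{lemma_quasihomogeneous}) gives $H'(v; v_j/k_j) = H_j(v_j)$, which exactly matches the leading term of $\xi_j^*(F(v)) = F_j(v) + o(\|F(v)\|) = H_j(v_j) + {\cal O}(\|v\|\cdot\|v_j\|^{k_j})$, obtained from the adapted form of $F$ and admissibility of $\xi_j$. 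Making the ansatz $W_j = v_j/k_j + \epsilon_j$ in the defining identity $F'(v; W_j(v)) = \xi_j^*(F(v))$ yields $F'(v; \epsilon_j(v)) = {\cal O}(\|v\|\cdot\|v_j\|^{k_j})$ in the $j$-th component, with analogous small quantities in the $i$-th components for $i \neq j$ coming from $R_i'(v; v_j)$. At leading order $D_vF$ is block-diagonal, with $j$-th block $D_{v_j}H_j$ nondegenerate on a dense open subset of $E_j$ (since $H_j$ is a nondegenerate homogeneous map); inverting block by block on this generic locus and extending by analyticity gives $\epsilon_j(v) = o(\|v\|)$ componentwise. The careful multidegree bookkeeping in this last step is the essential technical content of the lemma.
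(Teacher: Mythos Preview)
Your treatment of analyticity, tangency to $E_j$, vanishing on $E_j^\top$, and commutativity matches the paper and is correct. The gap is in your argument for the asymptotic condition $\zeta_j(v)=\theta_j(v)+o(\|v\|)$.

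First, the estimate $\xi_j^*(F(v)) = H_j(v_j) + {\cal O}(\|v\|\cdot\|v_j\|^{k_j})$ is not right. Admissibility gives $\xi_j^*(w) = w_j + o(\|w\|)$, so $\xi_j^*(F(v)) = F_j(v) + o(\|F(v)\|)$, and the remainder $o(\|F(v)\|) = o\bigl(\sum_i \|v_i\|^{k_i}\bigr)$ is \emph{not} dominated by $\|v\|\cdot\|v_j\|^{k_j}$ (take $v_j=0$, $v_i\neq 0$ for some $i\neq j$). Second, and more seriously, the block-inversion strategy cannot close: the $i$-th block $D_{v_i}H_i$ has inverse of size $\|v_i\|^{-(k_i-1)}$, and the $i$-th component of the right-hand side contains a term $o(\|F(v)\|)$ which is not ${\cal O}(\|v\|\cdot\|v_i\|^{k_i-1})$. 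So inverting on the generic locus does not produce the uniform bound $\epsilon_j(v)=o(\|v\|)$; the estimate degenerates along each $\{v_i=0\}$, and analyticity of $\epsilon_j$ alone does not rescue the order of vanishing at $0$.

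The paper sidesteps this entirely by \emph{using} the two structural facts you already proved. Since $\zeta_j$ vanishes on $E_j^\top$, its linear part $A_j$ vanishes on $E_j^\top$; since $\zeta_j$ is tangent to $E_j$, it restricts to a vector field on $E_j$. Hence it suffices to show $A_j|_{E_j}={\rm id}$, and this is a computation entirely inside $E_j$, where $F|_{E_j}(v)=H_j(v)+o(\|v\|^{k_j})$ with a single homogeneous map $H_j$. Comparing the leading terms of $DF\circ\zeta_j$ and $k_j\cdot\xi_j\circ F$ on $E_j$ yields $H_j'(v;A_j v)=k_j H_j(v)$; Euler's identity gives $H_j'(v;v)=k_j H_j(v)$, and since $D_vH_j$ is generically invertible (nondegeneracy of $H_j$), $A_j v=v$ on $E_j$. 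No inversion of the full $D_vF$ is needed, and no multidegree bookkeeping across different $E_i$ arises.
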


\begin{proof}
Fix $j\in [1,p]$, and set $\zeta_j\eqdef k_j \cdot F^*\xi_j$. Since the vector field $\xi_j$ is tangent to $E_j$, the vector field $\zeta_j$ is tangent to $F^{-1}(E_j)$. According to lemma \ref{prop_loctotinv}, $F^{-1}(E_j)$ coincides with $E_j$ in a neighborhood of $0$. Thus, the vector field $\zeta_j$ is tangent to $E_j$.  

Since the vector field $\xi_j$ vanishes on $E_j^\top\eqdef\{v_j=0\}$, the vector field $\zeta_j$ vanishes on $F^{-1}(E_j^\top)$. According to lemma \ref{prop_loctotinv}, $F^{-1}(E_j^\top)$ coincides with $E_j^\top$ in a neighborhood of $0$. So $\zeta_j$ vanishes on $E_j^\top$. 

Since $\zeta_j$ vanishes at $0$, we may write  
\[
\zeta_j(v)=\tau_j(v)+o\bigl(\|v\|\bigr)\quad \text{with}\quad \tau_j(v)\eqdef \bigl(v;A_j(v)\bigr)
\]
for some linear map $A_j:E\to E$. It remains to prove that $\tau_j=\theta_j$ which amounts to showing that $A_j=\pi_j$. 
Since $\zeta_j$ vanishes on $E_j^\top$, the linear map $A_j$ vanishes on $E_j^\top$. Thus it suffices to show that the restriction of $A$ to $E_j$ is the identity.

The map $F$ restricts to a self-map $F_j:E_j\to E_j$. The vector fields $\xi_j$ and $\zeta_j$ restrict to vector fields on $E_j$ (because they are tangent to $E_j$). It suffices to show that  $A_j(v_j)=v_j$. We therefore restrict our analysis to the space $E_j$, omitting the index $j$:
\begin{itemize}
\item $F(v)=H(v)+o\bigl(\|v\|^k\bigr)$ with $H:E\to E$ a nondegenerate homogeneous map of degree $k$,
\item $\xi(v)=(v;v)+o\bigl(\|v\|\bigr)$, 
\item $\zeta(v)=\tau(v)+o\bigl(\|v\|\bigr)$, and
\item $DF\circ \zeta = k\cdot\xi\circ F$.
\end{itemize}
On the one hand, 
\begin{align*}
DF\circ \zeta(v)=\bigl(F(v);  F'\circ\zeta(v)\bigr)&=\Bigl(  F(v);   H'\circ \zeta(v)+o\bigl(\|v\|^{k-1}\cdot \|\zeta(v)\|\bigr)\Bigr)\\
&=\Bigl(  F(v);   H'\circ \tau(v)+o\bigl(\|v\|^{k}\bigr)\Bigr)
\end{align*}
On the other hand, 
\[
k\cdot\xi\circ F(v)=\Bigl(F(v);k\cdot F(v)+o\bigl(\|F(v)\|\bigr)\Bigr)=\Bigl(F(v);k\cdot H(v)+o\bigl(\|v\|^k\bigr)\Bigr).
\]
It follows that $H'\circ \tau(v)=k\cdot H(v)$, thus 
\[
DH\circ \tau (v)=k\cdot\bigl(H(v);H(v)\bigr).
\]
According to Euler's identity, we therefore have $\tau(v)=(v;v)$ which implies that $A(v)=v$ as required. Lastly, the vector fields $\zeta_i$ and $\zeta_j$ commute for all $i,j\in[1,p]$ since the vector fields $\xi_i$ and $\xi_j$ commute for all $i,j\in [1,p]$. 
\end{proof}

\subsection{Dynamical Green Functions}\label{green}

We shall use dynamical Green functions introduced by Hubbard and
Papadopol \cite{hp}. We will first recall the construction for
homogeneous maps, and then explain how this construction may be
adapted to our setting.

Let $H:L\to L$ be a nondegenerate homogeneous map of degree $k$ on a
$\C$-linear space $L$ of dimension $m$. Then, the function
\[u_H:v\mapsto  \frac{1}{k}\log\bigl\|H(v)\bigr\| - \log\|v\|\]
is defined and bounded on $L-\{0\}$. It follows that the sequence of
plurisubharmonic functions
\[{\cal G}_H^n\eqdef \frac{1}{k^n}\log \|H^{\circ n}\|= {\cal G}_H^0 + \sum_{i=0}^{n-1}\frac{u_H\circ
F^{\circ i}}{k^i} :L\to \R\cup \{-\infty\}\] converges uniformly on
$L$ to a plurisubharmonic function ${\cal G}_H:L\to \R\cup
\{-\infty\}$ which is continuous on $L-\{0\}$ and satisfies
\[{\cal G}_H(v) = \log\|v\| +
{\cal{O}}(1)\quad\text{and}\quad {\cal G}_H\circ H = k\cdot {\cal G}_H.\] We
shall adapt this construction to our setting as follows.

\begin{lemma}\label{lemma_Green}
Let $F:(E,0)\to (E,0)$ be an analytic germ having an adapted
superattracting fixed point. There is a neighborhood $U$ of $0$ in $E$ such that for all
$j\in [1,p]$, the sequence of functions
\[{\cal G}_{j}^n \eqdef \frac{1}{k_j^n} \log \bigl\|\pi_j \circ F^{\circ n}\bigr\|:U\to \R\cup\{-\infty\}\]
 converges locally uniformly in $U$ to a plurisubharmonic function ${\cal G}_{j}:U\to \R\cup \{-\infty\}$ satisfying,
\[{\cal G}_{j}(v)\underset{v\to 0}= \log \bigl\|\pi_j(v)\bigr\|  + {\cal O}(1)
\quad\text{and}\quad {\cal G}_{j} \circ F = k_j\cdot  {\cal
G}_{j}.\]
\end{lemma}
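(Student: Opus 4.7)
The plan mirrors the classical Hubbard--Papadopol construction recalled just above the statement, but with $\rad$ replaced by $\theta_j$ and with $H$ replaced by the $j$th homogeneous piece $H_j$. Introduce the auxiliary function
\[ u_j(v) \eqdef \frac{1}{k_j}\log\bigl\|F_j(v)\bigr\| - \log\bigl\|v_j\bigr\|, \]
defined where $v_j\neq 0$. The first thing I want to prove is that $u_j$ is bounded on a neighborhood of $0$. Using the adapted expansion $F_j(v)=H_j(v_j)+{\cal O}(\|v\|\cdot\|v_j\|^{k_j})$ together with the nondegeneracy of $H_j$, which by homogeneity yields $c\|v_j\|^{k_j}\leq\|H_j(v_j)\|\leq C\|v_j\|^{k_j}$, I obtain $\|F_j(v)\|=\|H_j(v_j)\|\bigl(1+{\cal O}(\|v\|)\bigr)$ and hence $\frac{1}{k_j}\log\|F_j(v)\| = \log\|v_j\|+{\cal O}(1)$, i.e. $u_j={\cal O}(1)$.

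Next, shrink to a neighborhood $U$ of $0$ on which $F$ is defined and $F(U)\subset U$ with $\|F^{\circ n}\|\to 0$ uniformly (possible since $D_0F=0$), and on which $|u_j|\leq M$ wherever it is finite. A direct computation gives the telescoping identity
\[ \mathcal{G}_j^{n+1}(v)-\mathcal{G}_j^n(v) = \frac{1}{k_j^n}\,u_j\bigl(F^{\circ n}(v)\bigr), \]
so that
\[ \mathcal{G}_j^n(v) = \log\bigl\|\pi_j(v)\bigr\| + \sum_{i=0}^{n-1}\frac{u_j\circ F^{\circ i}(v)}{k_j^i}. \]
The geometric bound $|u_j\circ F^{\circ i}|/k_j^i\leq M/k_j^i$ on $U\setminus E_j^\top$ makes this series normally convergent there, so the sequence $\mathcal{G}_j^n$ converges locally uniformly on $U\setminus E_j^\top$ to a function $\mathcal{G}_j$ satisfying $\mathcal{G}_j(v)=\log\|\pi_j(v)\|+{\cal O}(1)$. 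On $E_j^\top$, Lemma \ref{prop_loctotinv} (iterated) gives $\pi_j\circ F^{\circ n}\equiv 0$, so $\mathcal{G}_j^n\equiv-\infty$ there for all $n$, and I simply set $\mathcal{G}_j\equiv-\infty$ on $E_j^\top$ in agreement with the pointwise limit.

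The functional equation $\mathcal{G}_j\circ F=k_j\cdot\mathcal{G}_j$ follows by passing to the limit in the obvious identity $\mathcal{G}_j^n\circ F = k_j\cdot\mathcal{G}_j^{n+1}$. For plurisubharmonicity, each $\mathcal{G}_j^n=k_j^{-n}\log\|\pi_j\circ F^{\circ n}\|$ is plurisubharmonic on $U$ as a positive multiple of the log-modulus of a holomorphic map $U\to E_j$, and the limit function, being locally uniformly bounded above (by the estimate $\log\|\pi_j(v)\|+{\cal O}(1)$) and the locally uniform limit off the pluripolar set $E_j^\top$ of plurisubharmonic functions, is itself plurisubharmonic.

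The main obstacle is genuinely the boundedness of $u_j$: this is where the two hypotheses on $F$ of Definition \ref{def_adapted}, namely the precise shape $F_j=H_j\circ\pi_j+{\cal O}(\|v\|\cdot\|v_j\|^{k_j})$ and the nondegeneracy of $H_j$, are simultaneously required in order that the two logarithms cancel to leading order. Everything else reduces to routine telescoping and standard facts about plurisubharmonic limits; the only minor subtlety is the behavior along the invariant locus $E_j^\top$, which is handled uniformly thanks to Lemma \ref{prop_loctotinv}.
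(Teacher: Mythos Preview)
Your proof is correct and follows essentially the same approach as the paper's: define the same auxiliary function $u_j$, establish its boundedness off $E_j^\top$ via the adapted expansion and nondegeneracy of $H_j$, and conclude by the telescoping identity $\mathcal{G}_j^N=\mathcal{G}_j^0+\sum_{i=0}^{N-1}u_j\circ F^{\circ i}/k_j^i$. If anything, you are slightly more explicit than the paper about the behavior along $E_j^\top$ and about why the limit is plurisubharmonic.
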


\begin{proof}
Let $U\subset E$ be a sufficiently small neighborhood of $0$ so that
$F$  is defined and analytic on $U$, $F(U)\subseteq U$
and $U$ is contained in the basin of attraction of $0$. The functions
\[{\cal G}_{j}^n \eqdef \frac{1}{k_j^n} \log \bigl\|\pi_j \circ F^{\circ n}\bigr\|:U\to \R\cup\{-\infty\}\]
are then defined and plurisubharmonic on $U$. 

By assumption
\[
F_j(v)=H_j(v_j)+{\cal O}\bigl(\|v\|\cdot\|v_j\|^{k_j}\bigr).
\]
Since $H_j$ is homogeneous of degree $k_j$ and nondegenerate, $\|v_j\|^{k_j}={\cal{O}}\bigl(\|H_j(v_j)\|\bigr)$. 
As a consequence, 
\[
\bigl\|\pi_j\circ F(v)\bigr\|=\bigl\|F_j(v)\bigr\|\underset{v\to 0}\sim \bigl\|H_j(v_j)\bigr\|=\bigl\|H_j\circ \pi_j(v)\bigr\|
\]
and restricting $U$ if necessary, the function 
\[\log \|\pi_j\circ F\| - \log\|H_j\circ \pi_j\|\]
is defined and bounded in $U-E_j^\top$. 
The function $\ds \frac{1}{k_j}\log \| H_j\circ \pi_j\| - \log\|\pi_j\|$ is defined and bounded in $E-E_j^\top$ since $H_j$ is homogeneous of degree $k_j$ and nondegenerate. It follows that the function
\[u_j\eqdef \frac{1}{k_j}\log \|\pi_j \circ F\| - \log\|\pi_j\|\]
 is defined and bounded in $U-E_j^\top$. 

Now,  the sequence
\[{\cal G}_{j}^N = {\cal G}_j^0 + \sum_{n=0}^{N-1}\frac{u_j\circ
F^{\circ n}}{k_j^n} :U\to \R\cup \{-\infty\}\] converges uniformly
on $U$ to a plurisubharmonic function ${\cal G}_j$ whose difference with $ {\cal G}_j^0 =\log\|\pi_j\|$ is bounded  
as required.
\end{proof}


\subsection{Cartan's Lemma}\label{cartan}

We shall use the following lemma of Cartan which is a
multidimensional version of the Schwarz Lemma. We include a proof
for completeness.

\begin{lemma}[Cartan]
Let $V$ be a bounded connected open subset of $E$ containing $0$,
let $\Phi:V\to V$ be an analytic map such that $\Phi(0)=0$ and $D_0
\Phi ={\rm id}$. Then, $\Phi={\rm id}$.
\end{lemma}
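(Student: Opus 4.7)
The plan is a classical argument by iteration and Cauchy estimates. Suppose for contradiction that $\Phi \neq \mathrm{id}$. Since $\Phi$ is analytic near $0$ with $\Phi(0)=0$ and $D_0\Phi=\mathrm{id}$, I can expand
\[
\Phi(v) = v + P_k(v) + \mathcal{O}\bigl(\|v\|^{k+1}\bigr),
\]
where $P_k:E\to E$ is the first nonvanishing homogeneous term of some degree $k\geq 2$. My goal is to derive a contradiction by showing $P_k \equiv 0$.

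The first step is to iterate. A direct induction on $n$ using the expansion of $\Phi$ gives
\[
\Phi^{\circ n}(v) = v + n\cdot P_k(v) + \mathcal{O}\bigl(\|v\|^{k+1}\bigr),
\]
where the implicit constant in the remainder term may depend on $n$ but the coefficient of the degree $k$ homogeneous part is exactly $n\cdot P_k$. This is the key identity: the degree $k$ part grows linearly in $n$ under iteration.

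The second step is to control the iterates globally. Since $\Phi(V)\subseteq V$ and $V$ is bounded, the family $(\Phi^{\circ n})_{n\geq 1}$ is uniformly bounded on $V$. Fix a closed polydisk $\overline{\Delta}\subset V$ centered at $0$; then the restrictions $\Phi^{\circ n}|_{\overline{\Delta}}$ are analytic and uniformly bounded, independent of $n$. By the Cauchy estimates, each homogeneous component of degree $k$ in the Taylor expansion of $\Phi^{\circ n}$ at $0$ has norm bounded by a constant $M$ independent of $n$. Comparing with the expansion above, this forces $\|n \cdot P_k\| \leq M$ for all $n$, hence $P_k \equiv 0$, contradicting our choice of $k$.

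The only subtlety to watch for is making the expansion of $\Phi^{\circ n}$ precise: one must verify carefully that the degree $k$ part of $\Phi^{\circ n}$ is indeed $n\cdot P_k$, using that $D_0\Phi = \mathrm{id}$ and that lower order terms between degrees $2$ and $k-1$ vanish. This is a straightforward induction, since if $\Phi^{\circ n}(v) = v + n P_k(v) + \mathcal{O}(\|v\|^{k+1})$, then
\[
\Phi^{\circ (n+1)}(v) = \Phi\bigl(\Phi^{\circ n}(v)\bigr) = \Phi^{\circ n}(v) + P_k\bigl(\Phi^{\circ n}(v)\bigr) + \mathcal{O}\bigl(\|v\|^{k+1}\bigr),
\]
and $P_k(\Phi^{\circ n}(v)) = P_k(v) + \mathcal{O}(\|v\|^{k+1})$ since $P_k$ is homogeneous of degree $k$ and $\Phi^{\circ n}(v)-v = \mathcal{O}(\|v\|^k)$. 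The rest is immediate.
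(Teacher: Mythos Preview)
Your proof is correct and is in fact the textbook argument for Cartan's uniqueness theorem (e.g.\ as presented in Krantz or Narasimhan). However, the paper takes a different route: it forms the Ces\`aro averages
\[
\Psi_n \eqdef \frac{1}{n}\sum_{j=0}^{n-1} \Phi^{\circ j},
\]
observes that $\Psi_n(0)=0$, $D_0\Psi_n=\id$, and $\Psi_n\circ\Phi = \Psi_n + (\Phi^{\circ n}-\id)/n$, and then extracts a normal limit $\Psi$ satisfying $\Psi\circ\Phi=\Psi$ with $D_0\Psi=\id$. Local invertibility of $\Psi$ then forces $\Phi=\id$ near $0$, and analytic continuation finishes.

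Your approach is more hands-on: you isolate the lowest nonlinear Taylor term and kill it with Cauchy estimates, avoiding any appeal to normal families or the construction of an auxiliary map. The paper's averaging argument is slicker and avoids tracking the combinatorics of iterated Taylor expansions, but it is slightly less self-contained since it invokes Montel. Both proofs ultimately exploit the same mechanism---the uniform boundedness of $(\Phi^{\circ n})$ on $V$---but they extract the conclusion differently: you read off the coefficient growth directly, while the paper passes to a limiting conjugacy.
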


\begin{proof}
The iterates $\Phi^{\circ n}$ are defined on $V$ for all $n\geq 0$.
For $n\geq 1$, let $\Psi_n:V\to E$ be defined as the average
\[\Psi_n \eqdef \frac{1}{n}\sum_{j=0}^{n-1} \Phi^{\circ j}.\]
Then,
\[\Psi_n(0) = 0,\quad D_0\Psi_n = {\rm id}\quad\text{and}\quad
\Psi_n\circ \Phi =  \Psi_{n} + \frac{\Phi^{\circ n}-{\rm id}}{n}.\]
In addition, the sequence $(\Psi_n)_{n\geq 1}$ is normal. Let $\Psi$
be a limit value. Then,
\[\Psi(0) = 0,\quad D_0\Psi = {\rm id}\quad\text{and}\quad \Psi\circ \Phi = \Psi.\]
In particular, $\Psi$ is invertible at $0$ and $\Phi$ is equal to
the identity near $0$, thus in $V$ by analytic continuation.
\end{proof}

\begin{corollary}
Let $V$ and $W$ be bounded connected open subsets of $E$ containing $0$,
let $\Phi:V\to W\subset E$ be an isomorphism and $\Psi:V\to W$ be an
analytic map such that $\Psi(0)=\Phi(0)$ and $D_0\Psi = D_0\Phi$.
Then, $\Psi = \Phi$.
\end{corollary}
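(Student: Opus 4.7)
The plan is a direct reduction to Cartan's Lemma by precomposing with $\Phi^{-1}$. Since $\Phi:V\to W$ is an isomorphism between bounded connected open subsets of $E$, the map $\Phi^{-1}:W\to V$ is defined and analytic, so the composition $\Theta\eqdef \Phi^{-1}\circ \Psi:V\to V$ is a well-defined analytic self-map of $V$.

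Next I would check the hypotheses of Cartan's Lemma for $\Theta$. We have $\Theta(0)=\Phi^{-1}\bigl(\Psi(0)\bigr)=\Phi^{-1}\bigl(\Phi(0)\bigr)=0$, and by the chain rule
\[
D_0\Theta = D_{\Phi(0)}(\Phi^{-1})\circ D_0\Psi = (D_0\Phi)^{-1}\circ D_0\Phi=\mathrm{id},
\]
using the assumption $D_0\Psi=D_0\Phi$. Since $V$ is bounded and connected and contains $0$, Cartan's Lemma applies and yields $\Theta=\mathrm{id}$ on $V$. Composing with $\Phi$ on the left gives $\Psi=\Phi$, completing the proof. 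There is no real obstacle here; the only point worth verifying is that the composition lands in $V$ (which is automatic from $\Phi^{-1}:W\to V$), so that Cartan's Lemma may be invoked verbatim.
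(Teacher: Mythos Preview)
Your proof is correct and follows exactly the same approach as the paper, which simply says to apply Cartan's Lemma to $\Phi^{-1}\circ\Psi:V\to V$. Your write-up merely makes explicit the verifications that $\Theta(0)=0$ and $D_0\Theta=\mathrm{id}$, which the paper leaves to the reader.
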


\begin{proof}
Apply Cartan's lemma to $\Phi^{-1}\circ \Psi:V\to V$.
\end{proof}

\subsection{Proof of theorem \ref{theo_local}}\label{proofthm1}

(1) $\Rightarrow$ (3). For all $n\geq 1$, $H^{\circ n} = H_1^{\circ
n}\oplus \cdots \oplus H_p^{\circ n}$ and the critical value set of
$H^{\circ n}$ is \[{\cal V}_{H^{\circ n}}={\cal V}_{H_1^{\circ n}} +
\cdots + {\cal V}_{H_p^{\circ n}}.\] For all $j\in [1,p]$ and all
$n\geq 1$, the critical value set of $H_j^{\circ n}:E_j\to E_j$ is
homogeneous (\ie{} a complex cone with vertex at the origin). Thus,
for all $j\in [1,p]$, the vector field $\theta_j$ is tangent to the
critical value set of $H^{\circ n}$ and the vector field
$\zeta_j\eqdef \Phi^* \theta_j$ is tangent to the critical value
locus of $F^{\circ n}$. 

For all $i,j$, the vector fields $\theta_i$ and $\theta_j$ commute, thus $\zeta_i$ and $\zeta_j$ commute. Since $D_0\Phi=\id$ the linear part of $\zeta_j$ at $0$ is $\theta_j$. Since the vector field $\theta_j$ is tangent to $E_j$ and vanishes on $E_j^\top$, the vector field $\zeta_j$ is tangent to $\Phi^{-1}(E_j)$ and vanishes on $\Phi^{-1}(E_j^\top)$. 

We claim that for all $j\in [1,p]$, we have that $\Phi(E_j^\top)=E_j^\top$ near $0$. Indeed, recall that $E_j^\top$ is locally totally invariant by $F$ (see lemma \ref{prop_loctotinv}). So $X\eqdef \Phi(E_j^\top)$ is locally totally invariant by $H$. Since $D_0\Phi=\id$, $X$ and thus $H^{-1} X$ is locally the graph of a function $\varphi: E_j^\top\to E_j$. According to lemma \ref{lemma_graphs}, $X=H^{-1} X=E_j^\top$. 

It follows that $\Phi^{-1}(E_j^\top)=E_j^\top$ near $0$. In particular, $\Phi^{-1}(E_j)=E_j$ near $0$ (because $E_j$ is the intersection of all the $E_i^\top$ for $i\neq j$). Consequently, $\zeta_j$ is tangent to $E_j$ and vanishes on $E_j^\top$. Thus the $p$-tuple of vector fields $(\zeta_1,\ldots,\zeta_p)$ is admissible.

(2) $\Rightarrow$ (1). According to corollary \ref{coro_simultlin},
since $(\xi_1,\ldots,\xi_p)$ is admissible, there exists a germ of
an analytic map $\Phi:(E,0)\to (E,0)$ such that $D_0\Phi=\id$ and
$D\Phi \circ \xi_j = \theta_j\circ \Phi$ for all $j\in [1,p]$. Then,
$\Phi$ conjugates $F$ to a map $\check{F}$, defined and analytic
near $0$ in $E$, satisfying
\[D\check{F}\circ \theta_j=k_j\cdot\theta_j\circ \check{F}.\]
According to lemma \ref{lemma_quasihomogeneous}, $\check{F}$ is
quasihomogeneous with multidegree $(k_1,\ldots,k_p)$. Since
$D_0\Phi=\id$ we have that $\check{F}=H$.

(3) $\Rightarrow$ (2). Let $U_0$ be a sufficiently small
neighborhood of $0$ in $E$  so that
\begin{itemize}
\item $F$ is defined and analytic on $U_0$, $F(U_0)\subseteq
U_0$, and $U_0$ is contained in the attracting basin of $0$,

\item the vector field $\zeta_j$ is defined and analytic on $U_0$, 
tangent to the critical value set of $F^{\circ n}:U_0\to F^{\circ
n}(U_0)$ for all $n\geq 0$ and all $j\in [1,p]$.
\end{itemize}
Then, $\zeta_j$ is liftable by $F^{\circ n}$ and we may define a
holomorphic vector field $\zeta_j^n$ on $U_0$ by
\[\zeta_j^n\eqdef k_j^n \cdot  (F^{\circ n})^* \zeta_j.\]
Then, for all $n\geq 0$, we have that $DF\circ \zeta_j^{n+1} =
k_j\cdot \zeta_j^n\circ F.$ According to lemma
\ref{lemma_liftadmissible}, the $p$-tuple of vector fields
$(\zeta_1^n,\ldots,\zeta_p^n)$ is admissible. We will show that
there is a neighborhood $V$ of $0$ in $E$ on which the sequence of
vector fields $(\zeta_j^n)_{n\geq 0}$ converges uniformly to a
vector field $\xi_j$ for all $j\in [1,p]$. Then, the $p$-tuple of
vector fields $(\xi_1, \ldots,\xi_p)$ is admissible and satisfies
$DF\circ \xi_j =k_j\cdot \xi_j \circ F$ for all $j\in [1,p]$. The
proof will then be completed.

According to lemma \ref{lemma_Green}, the
sequence of functions
\[{\cal G}_{F,j}^n\eqdef \frac{1}{k_j^n} \log \|\pi_j\circ F^{\circ n}\|:U_0\to \R\cup\{-\infty\}\] converges
locally uniformly in $U_0$ to a function  ${\cal G}_{F,j}:U_0\to
\R\cup\{-\infty\}$ which is plurisubharmonic and satisfies
\[{\cal G}_{F,j}(v) \underset{v\to 0}=
\log\bigl\|\pi_j(v)\bigr\| + {\cal{O}}(1) \quad\text{and}\quad
{\cal G}_{F,j}\circ F = k_j\cdot {\cal G}_{F_j}.\] We set
\[{\cal G}_F^n \eqdef \max_{j\in [1,p]} {\cal
G}_{F,j}^n\quad\text{and}\quad {\cal G}_F \eqdef \max_{j\in [1,p]}
{\cal G}_{F,j}.\] Note that these functions are plurisubharmonic in
$U_0$ and take the value $-\infty$ only at $0$. In addition, the
sequence of functions ${\cal G}_F^n$ converges locally uniformly to
${\cal G}_F$ in $U_0$. In particular, if $M>0$ is sufficiently
large, the level sets $\{{\cal G}_F^n<-M\}$ are compactly contained
in $U_0$. From now on, we assume that $M>0$ is sufficiently large so
that the sets
\[V_n\eqdef \bigl\{v\in U_0~:~\forall j\in [1,p],~{\cal G}_{F,j}^n(v)<-M\bigr\}\]
are compactly contained in $U_0$ for all $n\geq 0$.

Similarly, the sequence of plurisubharmonic functions
\[{\cal G}_{H,j}^n \eqdef
\frac{1}{k_j^n} \log \| \pi_j\circ H^{\circ n}\|:E\to
\R\cup\{-\infty\}\] converges locally uniformly in $E$ to a function
${\cal G}_{H_j}:E\to \R\cup\{-\infty\}$ which is plurisubharmonic
 and satisfies
\[{\cal G}_{H,j}(v) \underset{v\to 0}=
\log\bigl\|\pi_j(v)\bigr\| + {\cal{O}}(1)\quad\text{and}\quad{\cal
G}_{H_j}\circ H = k_j\cdot {\cal G}_{H,j}.\] We set
\[{\cal G}_H^n\eqdef \max_{j\in [1,p]} {\cal G}_{H,j}^n, \quad {\cal G}_H\eqdef \max_{j\in [1,p]} {\cal G}_{H,j}
\quad\text{and}\quad  W_n\eqdef \bigl\{v\in E~:~{\cal
G}_{H}^n(v)<-M\bigr\}.\]

According to corollary \ref{coro_simultlin}, there are germs of
analytic maps $\Phi_n:(E,0)\to (E,0)$ such that $D_0\Phi_n=\id$ and
$D\Phi_n \circ \zeta_j^n = \theta_j\circ \Phi_n$ for all $n\geq 0$
and all $j\in [1,p]$. According to lemma
\ref{lemma_quasihomogeneous}, for all $n\geq 0$, the map
$\Phi_n\circ F \circ \Phi_{n+1}^{-1}:(E,0)\to (E,0)$ is
quasihomogeneous, thus equal to $H$. 

In other words, we have the
following commutative diagrams:
\[\diagram
(E,0)\rto^{\Phi_{n+1}}\dto_F & (E,0)\dto^H\\
(E,0)\rto_{\Phi_n} & (E,0)
\enddiagram
\quad\text{and}\quad 
\diagram
(E,0)\rto^{\Phi_{n}}\dto_{F^{\circ n}} & (E,0)\dto^{H^{\circ n}}\\
(E,0)\rto_{\Phi_0} & (E,0).
\enddiagram\]

\begin{lemma}
For $n\geq 0$, the linearizing map $\Phi_n$ is analytic on $V_n$.
\end{lemma}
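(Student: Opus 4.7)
The plan is to proceed by induction on $n$, using the commutative diagrams $H\circ\Phi_{n+1}=\Phi_n\circ F$ and $H^{\circ n}\circ\Phi_n=\Phi_0\circ F^{\circ n}$ together with the Green-function estimates from lemma~\ref{lemma_Green}.

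For $n=0$, the germ $\Phi_0$ is analytic on some open neighborhood of $0$. Since ${\cal G}_{F,j}^0=\log\|\pi_j\|$, the sets $V_0$ shrink to $\{0\}$ as $M\to\infty$, so after fixing $M$ large enough, once and for all, one may assume $V_0$ lies inside the domain of analyticity of $\Phi_0$.

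For the inductive step, assume $\Phi_n$ is analytic on $V_n$. First verify $F(V_{n+1})\subseteq V_n$: if $v\in V_{n+1}$ and $j\in[1,p]$, then
\[
{\cal G}_{F,j}^n(F(v))=\frac{1}{k_j^n}\log\bigl\|\pi_j\circ F^{\circ n}(F(v))\bigr\|=k_j\cdot{\cal G}_{F,j}^{n+1}(v)<-k_jM\leq -M,
\]
so $\Phi_n\circ F$ is well-defined and analytic on $V_{n+1}$. Building $\Phi_{n+1}$ on $V_{n+1}$ then amounts, for each $v$, to selecting an $H$-preimage of $\Phi_n\circ F(v)$; I start from the germ $\Phi_{n+1}$ at $0$ provided by corollary~\ref{coro_simultlin} and extend it along paths in $V_{n+1}$ by analytic continuation.

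The only possible obstruction is that, during continuation, $\Phi_{n+1}(v)$ might meet the critical locus of $H$. To rule this out, I iterate the first diagram to get $H^{\circ(n+1)}\circ\Phi_{n+1}=\Phi_0\circ F^{\circ(n+1)}$, then invoke the functional equation ${\cal G}_{H,j}\circ H=k_j\cdot{\cal G}_{H,j}$ together with the asymptotic ${\cal G}_{H,j}(u)=\log\|\pi_j(u)\|+{\cal O}(1)$ near $0$, which transfers through $\Phi_0$ because $D_0\Phi_0=\id$. Combined, these yield, wherever $\Phi_{n+1}$ is already defined,
\[
{\cal G}_{H,j}\bigl(\Phi_{n+1}(v)\bigr)={\cal G}_{F,j}^{n+1}(v)+{\cal O}\bigl(k_j^{-(n+1)}\bigr).
\]
For $v\in V_{n+1}$ the right-hand side is bounded above by $-M/2$ (once $M$ is chosen large, uniformly in $n$ and $j$), so $\Phi_{n+1}(V_{n+1})$ remains trapped in a fixed small neighborhood of $0$ on which the branch of $H^{-1}$ fixing $0$ is unambiguous; hence the continuation is unobstructed and $\Phi_{n+1}$ is analytic on all of $V_{n+1}$. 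The main obstacle is precisely this image control: the Green-function bound must be used in a bootstrap fashion to keep the analytic continuation away from the branch locus of $H$ at every step.
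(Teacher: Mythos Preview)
Your approach is genuinely different from the paper's, and it has a real gap.

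The paper does not induct on $n$ or lift through $H$ at all. It goes back to the \emph{definition} of $\Phi_n$ as the Poincar\'e linearizer of the asymptotically radial vector field $\zeta^n=\zeta_1^n+\cdots+\zeta_p^n$, and shows directly that the backward flow of $\zeta^n$ is defined on $V_n$ and stays in $V_n$. Since $\zeta_j^n=k_j^n\,(F^{\circ n})^*\zeta_j$ and ${\cal G}_{F,j}^n=k_j^{-n}\,{\cal G}_{F,j}^0\circ F^{\circ n}$, flowing along $\zeta_j^n$ for time $t<0$ changes ${\cal G}_{F,j}^n$ by $t$; because the $\zeta_j^n$ commute, flowing along $\zeta^n$ for $t<0$ decreases ${\cal G}_F^n$. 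Hence every backward trajectory from a point of $V_n$ remains in $V_n$ and converges to $0$, so the limit $\Phi_n(v)=\lim_{t\to-\infty}e^{-t}\,{\cal F}_n(t,v)$ exists throughout $V_n$. No inverse of $H$ is ever invoked.

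Your argument breaks at the sentence ``the branch of $H^{-1}$ fixing $0$ is unambiguous.'' The quasihomogeneous map $H=H_1\oplus\cdots\oplus H_p$ is ramified at $0$; it has no local analytic inverse there, and its critical value locus is a complex cone through the origin, hence meets every neighborhood of $0$. Your Green-function estimate ${\cal G}_{H,j}\bigl(\Phi_{n+1}(v)\bigr)={\cal G}_{F,j}^{n+1}(v)+{\cal O}(k_j^{-(n+1)})$ is correct wherever the continuation is already defined, but it gives only \emph{boundedness} of every branch of the continuation. It does not rule out monodromy around loops in $V_{n+1}$ whose image under $\Phi_n\circ F$ winds around a component of the critical value locus of $H$, nor does it explain how to continue across that locus. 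Trapping $\Phi_{n+1}(v)$ in a small neighborhood of $0$ is not the same as selecting a single-valued branch of $H^{-1}$ there: for small $w\neq 0$ the fiber $H^{-1}(w)$ consists of many points, all close to $0$. (Also, the obstruction to lifting lies where $\Phi_n\circ F(v)$ meets the critical \emph{value} locus of $H$, not where $\Phi_{n+1}(v)$ meets the critical locus.) This is the missing idea; the paper's flow argument avoids the issue entirely by never inverting $H$.
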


\begin{proof}
Let us first show that the linearizing map $\Phi_n$ is defined on
$V_n$. Recall that $\Phi_n$ is defined as the linearizing map of the
asymptotically radial vector field
\[\zeta^n\eqdef \zeta_1^n + \cdots + \zeta_p^n.\]
Note that flowing along $\zeta^0_j$ during time $t<0$ decreases
${\cal G}_{F,j}^0$ by $t$. It follows that flowing along $(F^{\circ
n})^* \zeta^0_j$ during time $t<0$ decreases ${\cal G}_{F,j}^0\circ
F^{\circ n}$ by $t$. Thus, flowing along $\zeta^n_j$ during time
$t<0$ decreases ${\cal G}_{F,j}^n$ by $t$. Finally, since the vector
fields $(\zeta_j^n)_{j\in [1,p]}$ commute, flowing along $\zeta^n$
during time $t<0$ decreases ${\cal G}_F^n$ by $t$.

In particular, the flow of $\zeta^n$ is defined on $V_n$ for all
$t<0$, every trajectory remains in $V_n$ and converges to $0$. If we
denote by ${\cal F}_{n}$ this flow, the linearizing map $\Phi_n$ may
be obtained on $V_n$ as
\[\Phi_n(v) = \lim_{t\to -\infty} e^{-t}\cdot{\cal F}_{n}(t,x).\]
\end{proof}

Since $V_n$ is connected, the following commutative
diagram holds by analytic continuation to $V_n$:
\[\diagram
V_n \rto^{\Phi_n} \dto_{F^{\circ n}} & E \dto^{H^{\circ n}} \\
F^{\circ n}(V_n) \rto_{\Phi_0} & E.\enddiagram\]


\begin{lemma}\label{lemma_uj}
The function 
\[
u_j\eqdef \log\|\pi_j\circ\Phi_0\|-\log\|\pi_j\|
\]
is defined and bounded on $U_0-E_j^\top$.
\end{lemma}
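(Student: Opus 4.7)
The plan is to show that $\pi_j\circ\Phi_0$ vanishes identically on $E_j^\top$ near $0$ and then to factor it through $\pi_j$ by a holomorphic invertible matrix; the bound on $u_j$ is then immediate.

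First I would establish that $\Phi_0$ maps $E_j^\top$ into $E_j^\top$ near $0$. By construction (Corollary \ref{coro_simultlin}), $\Phi_0$ satisfies $D\Phi_0\circ\zeta_j^0=\theta_j\circ\Phi_0$ in a neighborhood of $0$, and $\zeta_j^0=\zeta_j$ is part of the admissible $p$-tuple, so it vanishes on $E_j^\top$. For any $v\in E_j^\top$ sufficiently close to $0$, the linearization identity yields $\theta_j\bigl(\Phi_0(v)\bigr)=(D_v\Phi_0)\bigl(\zeta_j(v)\bigr)=0$, that is, $\pi_j\bigl(\Phi_0(v)\bigr)=0$, so $\Phi_0(v)\in E_j^\top$. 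Equivalently, the holomorphic map $\pi_j\circ\Phi_0$ vanishes identically on $E_j^\top$ in a neighborhood of $0$.

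Next I would apply a Hadamard-style division. Choose linear coordinates $(z_1,\ldots,z_r)$ on $E_j$ so that $E_j^\top=\{z_1=\cdots=z_r=0\}$. Each component of the $E_j$-valued holomorphic germ $\pi_j\circ\Phi_0$ vanishes when all $z_i$ vanish, so by Hadamard's lemma it may be written as
\[\pi_j\circ\Phi_0(v)=M(v)\cdot\pi_j(v),\]
with $M:U\to\on{End}(E_j)$ holomorphic on some neighborhood $U$ of $0$. Differentiating at $0$ and using $D_0\Phi_0=\id$ gives $M(0)=\pi_j\circ D_0\Phi_0|_{E_j}=\id_{E_j}$.

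Finally, since $M(0)=\id$ is invertible, after shrinking $U$ both $\|M\|$ and $\|M^{-1}\|$ are bounded on $U$. For $v\in U-E_j^\top$ the factorization gives
\[\|M(v)^{-1}\|^{-1}\cdot\|\pi_j(v)\|\ \le\ \|\pi_j\circ\Phi_0(v)\|\ \le\ \|M(v)\|\cdot\|\pi_j(v)\|,\]
so $|u_j(v)|$ is bounded by $\max\bigl(\log\|M(v)\|,\log\|M(v)^{-1}\|\bigr)$, hence uniformly bounded on $U-E_j^\top$; replacing $U_0$ by $U$ (or working at the level of germs) yields the lemma. The only substantive step is the first: once $\pi_j\circ\Phi_0$ is known to vanish on $E_j^\top$, the Hadamard division and the comparison of norms are routine.
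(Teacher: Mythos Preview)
Your proof is correct, and the key step --- showing that $\pi_j\circ\Phi_0$ vanishes on $E_j^\top$ near $0$ --- is obtained by a genuinely different (and more direct) argument than the paper's. You exploit the linearizing equation $D\Phi_0\circ\zeta_j=\theta_j\circ\Phi_0$ together with the admissibility hypothesis that $\zeta_j$ vanishes on $E_j^\top$: evaluating at $v\in E_j^\top$ gives $\theta_j(\Phi_0(v))=0$, hence $\pi_j(\Phi_0(v))=0$. The paper instead argues dynamically: it uses that $E_j^\top$ is locally totally invariant under $F$ (Lemma~\ref{prop_loctotinv}), so that $X\eqdef\Phi_0(E_j^\top)$ has $H^{-1}X=\Phi_1(E_j^\top)$, which is locally the graph of a function $(E_j^\top,0)\to(E_j,0)$; then the graph lemma (Lemma~\ref{lemma_graphs}) forces $X=E_j^\top$. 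Your route is more elementary and self-contained, needing only the definition of $\Phi_0$ and the admissibility of $\zeta_j$; the paper's route parallels the argument already used in the implication $(1)\Rightarrow(3)$, where no linearizing equation for the individual $\zeta_j$ is available and one must rely on the dynamics. Once the vanishing is established, your Hadamard factorization $\pi_j\circ\Phi_0=M\cdot\pi_j$ with $M(0)=\id_{E_j}$ gives the two-sided bound cleanly; the paper compresses this into the asymptotic $\pi_j\circ\Phi_0(v)=\pi_j(v)+o(\|\pi_j(v)\|)$, which amounts to the same thing.
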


\begin{proof}
Since $D_0\Phi_0=\id$, we have the a priori estimate
\[
\pi_j\circ\Phi_0(v)\underset{v\to 0}=\pi_j(v)+o\bigl(\|v\|\bigr).
\]

As in section \ref{proofthm1}, we show that that $X\eqdef \Phi_0(E_j^\top)$ coincides with $E_j^\top$ near $0$. Indeed, $E_j^\top$ is locally totally invariant by $F$. So $H^{-1} X= \Phi_1(E_j^\top)$ is a graph of a function $\varphi:(E_j^\top,0) \to (E_j,0)$ near $0$. According to lemma \ref{lemma_graphs}, $X=H^{-1} X=E_j^\top$. 

Consequently, $\pi_j\circ\Phi_0$ vanishes on $E_j^\top$, and so 
\[
\pi_j\circ\Phi_0(v)\underset{v\to 0}=\pi_j(v)+o\bigl(\|\pi_j(v)\|\bigr).
\]
The lemma follows immediately. 
\end{proof}

Observe that 
\begin{align*}
{\cal G}_{H,j}^n\circ \Phi_n &= \frac{1}{k_j^n}\log\left\|\pi_j\circ H^{\circ n}\circ \Phi_n\right\|\\
&=
\frac{1}{k_j^n}\log\left\|\pi_j\circ \Phi_0\circ F^{\circ n}\right\| =\frac{1}{k_j^n}u_j\circ F^{\circ n}+{\cal G}_{F,j}^n.
\end{align*}
Setting 
\[V\eqdef \bigl\{v\in U_0~:~{\cal
G}_{F}(v)<-M\bigr\}\quad\text{and}\quad W\eqdef \bigl\{v\in
U_0~:~{\cal G}_{H}(v)<-M\bigr\},\] 
we deduce that ${\cal G}_{H}^n\circ \Phi_n$ converges uniformly to ${\cal G}_{F}^n$ on every compact subset of $V$. Therefore the sequence $(\Phi_n)$ is uniformly bounded on every compact subset of $V$. 
Similarly, any compact subset of $W$ is contained in the image of $\Phi_n$ for $n$ large enough, and the sequence $(\Phi_n^{-1})$ is uniformly bounded on every compact subset of $W$.

Thus, the sequence of maps $(\Phi_n)$ is normal
on any compact subset of $V$ and the sequence of maps $(\Phi_n^{-1})$ is
normal on any compact subset of $W$. Extracting a subsequence, we
see that there is an isomorphism $\Phi:V\to W$ such that $\Phi(0)=0$
and $D_0\Phi={\rm id}$. According to Cartan's lemma, any limit value
of the sequence $(\Phi_n)$ must coincide with $\Phi$. Thus, the
whole sequence $(\Phi_n)$ converges to $\Phi$ locally uniformly in
$V$ and the sequence $(\zeta_j^n = \Phi_n^* \theta_j)$ converges to
$\xi_j=\Phi^*\theta_j$.

This completes the proof of theorem \ref{theo_local}.

\section{The Global Result}\label{globaltheoremsection}

We will now prove theorem \ref{theo_global}. 
Since $F:\Omega\to\Omega$ is proper and since ${\cal B}_a(F)$ is a connected component of $F^{-1}\bigl({\cal B}_a(F)\bigr)$, the restriction $F:{\cal B}_a(F)\to{\cal B}_a(F)$ is proper.

Let ${\cal
G}_{H,j}:E\to \R\cup\{-\infty\}$ and ${\cal G}_H:E\to
\R\cup\{-\infty\}$ be the dynamical Green functions of $H$
introduced above. Let ${\cal G}_{F,j}:{\cal B}_a(F)\to \R\cup\{-\infty\}$
be defined by
\[{\cal G}_{F,j}(x) = \frac{1}{k_j^n} {\cal G}_{H,j}\circ \Phi\circ
F^{\circ n}(x)\] where $n\geq 0$ is chosen sufficiently large so
that $F^{\circ n}(x)$ belongs to a neighborhood of $a$ on which $\Phi$ is defined. Let ${\cal
G}_F:{\cal B}_a(F)\to \R\cup\{-\infty\}$ be defined by
\[{\cal G}_F \eqdef \max_{j\in [1,p]} {\cal G}_{F,j}.\]
Let $M>0$ be sufficiently large so that $\Phi:(E,0)\to (E,0)$ has an inverse branch
 defined on
\[W \eqdef \bigl\{v\in E~:~{\cal G}_F(v)<-M\bigr\}.\]
Set $V\eqdef \Phi^{-1}(W)$, so that $\Phi:V\to W$ is an isomorphism.
Increasing $M$ if necessary, we see that $V$ is relatively compact
in ${\cal B}_a(F)$.

For $j\in [1,p]$, set
\[\xi_j\eqdef \Phi^*\theta_j\] which is defined and analytic near $0$. Then, $\xi_j = k_j\cdot F^*\xi_j$ near $a$.
Since $\theta_j$ is tangent to the postcritical set of $H$, and since near $a$, the map $\Phi$ sends the postcritical set of $F:{\cal B}_a(F)\to {\cal B}_a(F)$ to the postcritical set of $H:{\cal B}_0(H)\to {\cal B}_0(H)$, the vector field $\xi_j$ is tangent to the postcritical set of $F:{\cal B}_a(F)\to {\cal B}_a(F)$.
In particular, $\xi_j$ is tangent
to the critical value set of $F^{\circ n}$ for $n$ large enough. Thus we
can extend $\xi_j$ to the whole basin of attraction ${\cal B}_a(F)$ using the
formula $\xi_j = k_j^{n}\cdot (F^{\circ n})^* \xi_j$ for $n$ large
enough. We therefore have a vector field $\xi_j$ which is defined
and analytic on ${\cal B}_a(F)$ and satisfies $\xi_j = k_j\cdot F^*\xi_j$
on ${\cal B}_a(F)$. Let $(t,x) \mapsto {\cal F}_{t,j}(x)$ be the flow of
the vector field $\xi_j$ and let $(t,x)\mapsto {\cal F}_t(x)$ be the
flow of the vector field
\[\xi\eqdef \xi_1+\cdots+\xi_p.\]

\begin{lemma}
The map ${\cal F}_t$ is defined on ${\cal B}_a(F)$ for all $t\leq 0$. For
all $x\in {\cal B}_a(F)$, we have that ${\cal F}_t(x)\to 0$ as $t\to
-\infty$.
\end{lemma}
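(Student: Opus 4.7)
The plan is to combine the local linear model near $a$, where $\Phi$ conjugates $\xi$ to the radial field $\rad$, with an extension argument driven by properness of $F:{\cal B}_a(F)\to{\cal B}_a(F)$. The key ingredients are the intertwining $F\circ {\cal F}_{t,j}={\cal F}_{k_j t,j}\circ F$ (immediate from $\xi_j=k_j\cdot F^*\xi_j$, which gives $DF\circ\xi_j=k_j\xi_j\circ F$) and the identities ${\cal G}_{F,j}\circ{\cal F}_{t,j}={\cal G}_{F,j}+t$ together with ${\cal G}_{F,i}\circ{\cal F}_{t,j}={\cal G}_{F,i}$ for $i\neq j$, both consequences of the local model.

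\emph{Local step on $V$.} Set $W\eqdef \Phi(V)$. The time-$t$ flow of $\theta_j$ sends $v$ to $v+(e^t-1)v_j$, scaling the $E_j$-component by $e^t$ and fixing the others. Since $H=H_1\oplus\cdots\oplus H_p$, we have $\pi_j\circ H^{\circ n}(v)=H_j^{\circ n}(v_j)$, so ${\cal G}_{H,j}$ depends only on $v_j$, and homogeneity of $H_j^{\circ n}$ of degree $k_j^n$ yields ${\cal G}_{H,j}(\lambda v_j)=\log|\lambda|+{\cal G}_{H,j}(v_j)$. For $t\leq 0$ the flow of $\theta_j$ therefore decreases ${\cal G}_{H,j}$ by $|t|$ and leaves ${\cal G}_{H,i}$ unchanged for $i\neq j$. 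Consequently $W$ is invariant under the flow of $\rad=\sum_j\theta_j$ for $t\leq 0$, and each trajectory contracts to $0$. Pulling back via $\Phi$ establishes the lemma on $V$.

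\emph{Extension to ${\cal B}_a(F)$.} Fix $x\in{\cal B}_a(F)$ and choose $n$ so that $F^{\circ n}(x)\in V$. Iterating the intertwining gives $F^{\circ n}\circ{\cal F}_{s,j}={\cal F}_{k_j^n s,j}\circ F^{\circ n}$ on the common domain. Suppose the maximal interval of existence of ${\cal F}_{\cdot,j}(x)$ were $(t_0,0]$ with $t_0>-\infty$. Then for $t\in(t_0,0]$ the image $F^{\circ n}({\cal F}_{t,j}(x))={\cal F}_{k_j^n t,j}(F^{\circ n}(x))$ lies on the compact curve $\{{\cal F}_{k_j^n t,j}(F^{\circ n}(x)):t\in[t_0,0]\}\subset V$. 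Since $F:{\cal B}_a(F)\to{\cal B}_a(F)$ is proper, so is $F^{\circ n}$, and the preimage of this compact curve is a compact subset of ${\cal B}_a(F)$. Hence ${\cal F}_{t,j}(x)$ remains in a compact subset of ${\cal B}_a(F)$ as $t\to t_0^+$, so the trajectory extends beyond $t_0$, contradicting maximality. The $\xi_j$ commute near $a$ (inherited from the commuting $\theta_i,\theta_j$) and hence on ${\cal B}_a(F)$ by analytic continuation, so ${\cal F}_t\eqdef {\cal F}_{t,1}\circ\cdots\circ{\cal F}_{t,p}$ is a well-defined flow of $\xi$ on ${\cal B}_a(F)$ for all $t\leq 0$.

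\emph{Convergence and main obstacle.} Starting from ${\cal G}_{F,j}(y)=k_j^{-n}{\cal G}_{H,j}(\Phi(F^{\circ n}(y)))$ and applying the local identities for ${\cal G}_{H,j}$ along the flow of $\theta_j$, one obtains ${\cal G}_{F,j}\circ{\cal F}_{t,j}={\cal G}_{F,j}+t$ and ${\cal G}_{F,i}\circ{\cal F}_{t,j}={\cal G}_{F,i}$ for $i\neq j$, so ${\cal G}_F\circ{\cal F}_t={\cal G}_F+t\to-\infty$ as $t\to-\infty$. Hence ${\cal F}_t(x)\in V$ for $t$ sufficiently negative, and the local step concludes ${\cal F}_t(x)\to a$. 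The main obstacle is the extension step: converting the algebraic intertwining into an analytic existence statement on the whole basin. Properness of $F$ is exactly what forces each trajectory to remain in a compact subset of ${\cal B}_a(F)$ (namely the $F^{\circ n}$-preimage of its compact image in $V$); everything else is bookkeeping with the Green functions.
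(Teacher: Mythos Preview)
Your argument tracks the paper's approach closely (local linear model on $V$, the intertwining $F^{\circ n}\circ{\cal F}_{t,j}={\cal F}_{k_j^n t,j}\circ F^{\circ n}$, properness to propagate existence, and the Green-function identity ${\cal G}_F\circ{\cal F}_t={\cal G}_F+t$), and the existence portion is fine. The gap is in your final convergence step.

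From ${\cal G}_F({\cal F}_t(x))={\cal G}_F(x)+t\to-\infty$ you conclude ``hence ${\cal F}_t(x)\in V$ for $t$ sufficiently negative.'' This does not follow. The set $V=\Phi^{-1}(W)$ is only the component of the sublevel set $\{{\cal G}_F<-M\}$ containing $a$; globally $\{{\cal G}_F<-M\}$ also contains neighborhoods of every other point in the backward orbit $\bigcup_{n\geq 0}F^{-n}(a)$, since ${\cal G}_F=-\infty$ exactly on that set. So ${\cal G}_F({\cal F}_t(x))<-M$ does not by itself force the trajectory into $V$; a priori it could accumulate on some preimage $b\neq a$ of $a$. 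Equivalently, you have shown $F^{\circ n}({\cal F}_t(x))\to a$, but this only yields ${\cal F}_t(x)\to b$ with $F^{\circ n}(b)=a$.

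The paper closes this gap with a connectedness argument: each trajectory stays in a compact subset of ${\cal B}_a(F)$ (your properness step gives this), so it has accumulation points, and these lie in the discrete set $\{{\cal G}_F=-\infty\}$; hence each trajectory converges to a single point of the backward orbit of $a$. The basins of these points under the flow ${\cal F}_t$ then form an open partition of ${\cal B}_a(F)$, and connectedness of ${\cal B}_a(F)$ forces there to be just one basin, that of $a$. You need to insert an argument of this type (or otherwise rule out convergence to a nontrivial preimage of $a$) before invoking the local step.
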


\begin{proof}
We first want to prove that ${\cal F}_t$ is defined on ${\cal B}_a(F)$ for
all $t\leq 0$. The maps ${\cal F}_{t,j}$ are defined and analytic on
$V$ for all $t\leq 0$. Indeed, $\Phi:V\to W$ conjugates ${\cal
F}_{t,j}$ to the linear map
\[{\cal H}_{t,j}:v_1+\cdots + v_p\mapsto v_1 + \cdots + v_{j-1} + e^t v_j + v_{j+1}
+ \cdots + v_p.\] The equality $\xi_j = k_j\cdot F^*\xi_j$ yields
$F\circ {\cal F}_{t,j} = {\cal F}_{k_jt,j}\circ F$ and so, $F^{\circ
n}\circ {\cal F}_{t,j} = {\cal F}_{k_j^nt,j}\circ F^{\circ n}$. For
all $x\in {\cal B}_a(F)$, if $n$ is large enough, ${\cal F}_{k_j^nt,j}\circ
F^{\circ n}(x)$ remains in a compact subset of ${\cal B}_a(F)$ for all
$t\leq 0$. Since $F^{\circ n}:{\cal B}_a(F)\to {\cal B}_a(F)$ is proper, ${\cal
F}_{t,j}(x)$ also remains in a compact subset of ${\cal B}_a(F)$, and so,
is defined for all $t\leq 0$. Since the vector fields $\xi_j$
commute, we have that
\[{\cal F}_t = {\cal F}_{t,1}\circ \cdots \circ {\cal F}_{t,p}.\]
So, ${\cal F}_t$ is defined on ${\cal B}_a(F)$ for all $t\leq 0$.

Near $0$, we have that ${\cal G}_{F,j} ={\cal G}_{H,j}\circ \Phi$
and $\Phi\circ {\cal F}_{t,j} = {\cal H}_{t,j}\circ \Phi$. In
addition, ${\cal G}_{H,j}\circ {\cal H}_{t,j} = {\cal G}_{H,j} +t$.
It follows that near $0$, we have that
\[{\cal G}_{F,j} \circ {\cal F}_{t,j} = {\cal G}_{F,j} + t.\]
The equality $\xi_j = k_j\cdot F^*\xi_j$ yields $F\circ {\cal
F}_{t,j} = {\cal F}_{k_jt,j}\circ F$. So, for $t\leq 0$, the
following equality is valid on ${\cal B}_a(F)$:
\begin{align*}
{\cal G}_{F,j} \circ {\cal F}_{t,j} = \frac{1}{k_j^n} {\cal G}_{F,j}
\circ F^{\circ n} \circ {\cal F}_{t,j}
&= \frac{1}{k_j^n} {\cal G}_{F,j} \circ {\cal F}_{k_j^nt,j}\circ F^{\circ n}\\
&= \frac{1}{k_j^n} {\cal G}_{F,j}\circ F^{\circ n} + t = {\cal
G}_{F,j} + t.
\end{align*}
As a consequence, for $t\leq 0$, the following equality is valid on
${\cal B}_a(F)$:
\[{\cal G}_F\circ {\cal F}_t = {\cal G}_F \circ{\cal F}_{t,1}\circ \cdots \circ {\cal F}_{t,p}
={\cal G}_F +t.\]

We now prove that $F^{-1}\{0\}=\{0\}$. As we have seen, each
trajectory $\bigl({\cal F}_t(x)\bigr)_{t\leq 0}$ remains in a compact
subset of ${\cal B}_a(F)$. Since ${\cal G}_F\circ {\cal F}_t = {\cal G}_F +
t$, as $t\to -\infty$, each trajectory $\bigl({\cal
F}_t(x)\bigr)_{t\leq 0}$ must converge to a point where ${\cal G}_F$
takes the value $-\infty$, \ie{} to a point in the backward orbit of
$0$. We may therefore partition ${\cal B}_a(F)$ in the basins of those
points (for the flow ${\cal F}_t$). The basins are open and since
${\cal B}_a(F)$ is connected, there is only one such basin: the basin of
$0$.
\end{proof}

The linearizer $\Phi$ of $\xi$ extends to the whole set ${\cal B}_a(F)$ by
\[\Phi(x) = e^{-t}\cdot \Phi\circ {\cal F}_t(x)\]
where $t\leq 0$ is chosen sufficiently negative so that ${\cal
F}_t(x)\in V$.

It is injective on ${\cal B}_a(F)$. Indeed, assume $\Phi(x_1) =\Phi(x_2)$
with $x_1$ and $x_2$ in ${\cal B}_a(F)$. Choose $t\leq 0$ sufficiently
negative so that ${\cal F}_t(x_1)$ and ${\cal F}_t(x_2)$ belong to
$V$. Then,
\[\Phi\circ{\cal F}_t(x_1)=e^t
\Phi(x_1) = e^t \Phi(x_2)=\Phi\circ {\cal F}_t(x_2)\] Since
$\Phi:V\to W$ is an isomorphism, we have that ${\cal F}_t(x_1)
={\cal F}_t(x_2)$, and so,
\[x_1={\cal F}_{-t}\circ {\cal F}_t(x_1)
= {\cal F}_{-t}\circ {\cal F}_t(x_2)=x_2.\]

The equality $\Phi\circ F = H\circ \Phi$ holds on ${\cal B}_a(F)$ by
analytic continuation. This shows that $\Phi({\cal B}_a(F))$ is contained
in the basin of attraction ${\cal B}_0(H)$.

Finally, $\Phi:{\cal B}_a(F)\to{\cal B}_0(H)$ is proper, thus an isomorphism.
Indeed, let $K\subset {\cal B}_0(H)$ be a compact set and let $n\geq 0$
be sufficiently large so that $K_n\eqdef H^{\circ n}(K)\subset W$.
Since $\Phi:V\to W$ is an isomorphism, $\Phi^{-1}(K_n)\subset V$ is
compact. Since $F^{\circ n}:{\cal B}_a(F)\to {\cal B}_a(F)$ is proper,
$F^{-n}\bigl(\Phi^{-1}(K_n)\bigr)$ is compact. This compact set contains
$\Phi^{-1}(K)$ which is closed since $\Phi$ is continuous. It
follows that $\Phi^{-1}(K)$ is compact.

This completes the proof that $\Phi:{\cal B}_a(F)\to {\cal B}_0(H)$ is an
isomorphism.

\section{Applications}\label{applications}

In this section we apply our theorems to a family of postcritically
finite endomorphisms of projective space which arose in \cite{k2}
and were studied in  \cite{bekp}. 

Let $h:{\mathbb P}^m\to {\mathbb
P}^m$ be an endomorphism; that is, a map which is everywhere holomorphic (the map has no indeterminacy points). Let ${\cal{C}}_h$ be critical locus of $h$. We define
the {\em{postcritical locus}} of $h$ to be
\[
{\cal{P}}_h:=\bigcup_{n \geq1} h^{\circ n}({\cal{C}}_h).
\]
The endomorphism $h$ is {\em{postcritically finite}}  if ${\cal{P}}_h$ is algebraic. Equivalently (via a Baire category argument), each component of 
 ${\cal{C}}_h$ is either periodic, or preperiodic to a periodic
cycle of components in ${\cal{P}}_h$. Postcritically finite endomorphisms of
${\mathbb P}^m$ were first studied by Forn{\ae}ss and Sibony in
\cite{fs}, and by Ueda in \cite{ue2}.

\subsection{Constructing endomorphisms}

The following construction is a particular case of a more general construction in \cite{k2}. Let $I$ be a finite set of cardinality $m\geq 1$. Denote by $E$ the $\C$-vector space of functions $x:I\to \C$ whose average is $0$. For $x \in E$, we use the notation $x_i\eqdef x(i)$ and set 
\[
P_x(t)\eqdef \frac{m+1}{m}\; \sum_{j\in I} \;\int_{x_j}^t\; \prod_{i\in I} (w-x_i)\;\mathrm{d} w.
\]
The polynomial $P_x$ is the unique monic centered polynomial of degree $m+1$ whose critical points are the points $(x_i)_{i\in I}$, repeated according to their multiplicities, and for which the barycenter of the critical values $\bigl(P_x(x_i)\bigr)_{i\in I}$ is $0$. 

The function $y\eqdef P_x\circ x:I\to \C$ belongs to $E$ and satisfies 
\[
\forall i\in I\quad y_i=P_x(x_i).
\]
We denote by $H:E\to E$ the map defined by 
\[
H(x)\eqdef P_x\circ x.
\]
\begin{proposition}
For $m\geq 2$, the map $H:E\to E$ is a homogeneous map of degree $m+1$. For $m\geq 3$ it induces an endomorphism $h:\P(E)\to \P(E)$, where $\P(E)$ is the projective space associated to $E$ (isomorphic to $\P^{m-2}(\C)$). 
\end{proposition}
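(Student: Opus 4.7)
The plan is to verify three things in sequence: that $H$ is a well-defined polynomial self-map of $E$, that it is homogeneous of degree $m+1$, and that it is nondegenerate, so that for $m\geq 3$ it descends to an everywhere-defined morphism $h:\P(E)\to\P(E)$ on a projective space of dimension $m-2\geq 1$.

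That $H$ maps $E$ into $E$ is immediate from the defining property of $P_x$ recalled in the text: the barycenter of its critical values vanishes. That $H$ is polynomial is clear from the explicit integral formula. For homogeneity, I would make the substitution $w=\lambda u$ in the defining integral: for any $\lambda\in\C^*$,
\[
P_{\lambda x}(\lambda t) = \frac{m+1}{m}\sum_{j\in I}\int_{x_j}^t\prod_{i\in I}(\lambda u-\lambda x_i)\,\lambda\,\mathrm{d}u = \lambda^{m+1}P_x(t).
\]
Setting $t=x_i$ then gives $H(\lambda x)_i = P_{\lambda x}(\lambda x_i) = \lambda^{m+1}P_x(x_i) = \lambda^{m+1}H(x)_i$, and continuity at $\lambda=0$ completes the homogeneity claim.

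The substantive step is nondegeneracy, $H^{-1}\{0\}=\{0\}$. Suppose $H(x)=0$, so $P_x(x_i)=0$ for every $i\in I$. Differentiating the defining formula gives $P_x'(t)=(m+1)\prod_{i\in I}(t-x_i)$. Let $a_1,\ldots,a_r$ be the distinct values taken by the $x_i$, with multiplicities $n_1,\ldots,n_r$, so that $\sum_s n_s=m$. Then $P_x'$ vanishes to order $n_s$ at $a_s$, so $P_x$ vanishes to order at least $n_s+1$ there. Summing, the total order of vanishing of $P_x$ is at least $m+r$; but $\deg P_x=m+1$, forcing $r\leq 1$. Hence all $x_i$ equal a common constant $c$, and the condition $x\in E$ then forces $mc=0$, so $x=0$. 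Combined with homogeneity of positive degree $m+1$, nondegeneracy yields a well-defined, everywhere holomorphic map $h:\P(E)\to\P(E)$.

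I expect the only real obstacle to be this nondegeneracy step, and it rests on the single observation that vanishing of a critical value would force $P_x$ to acquire a root at each distinct $a_s$ of multiplicity strictly exceeding the corresponding critical multiplicity; the homogeneity and well-definedness are essentially substitution and definitional unpacking.
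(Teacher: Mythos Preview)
Your proof is correct and follows essentially the same route as the paper's. The homogeneity computation is identical (the same substitution in the integral), and your nondegeneracy argument is simply an explicit unpacking of the step the paper states tersely as ``$P_x$ has only one critical value, therefore $P_x$ has only one critical point'': your multiplicity count $\sum_s(n_s+1)=m+r\leq m+1$ is exactly what justifies that implication.
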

\begin{proof}
The polynomial $P_x$ depends analytically on $x\in E$, therefore $H$ is analytic. Since 
\begin{align*}
P_{\lambda x} (\lambda t)&=\displaystyle\frac{m+1}{m}\; \sum_{j\in I} \;\int_{\lambda x_j}^{\lambda t}\; \prod_{i\in I} (w-\lambda x_i)\;\mathrm{d} w\\
&\underset{w=\lambda v}=\displaystyle\frac{m+1}{m}\; \sum_{j\in I} \;\int_{x_j}^{t}\; \prod_{i\in I} (\lambda v-\lambda x_i)\;\mathrm{d} (\lambda v)=\lambda^{m+1} P_x(t),
\end{align*}
we have $H(\lambda x)=\lambda^{m+1} H(x)$, so $H$ is homogeneous of degree $m+1$. 

Assume $P_x(x_i)=0$ for all $i\in I$. This means that $P_x$ has only one critical value, namely $0$. Therefore $P_x$ has only one critical point. Thus $x$ is constant, and the average of $x$ is $0$, so we have $x=0$. This implies that $H^{-1}(0)=\{0\}$, and consequently, $H:E\to E$ induces an endomorphism $h:\P(E)\to \P(E)$. 
\end{proof}

The significance of these endomorphisms is that their fixed points correspond to polynomials with fixed critical points. More precisely, we have the following correspondence. 
\begin{proposition}\label{prop_fixedpoints}
If $x$ is a fixed point of $H:E\to E$, then $P_x$ is a monic centered polynomial of degree $m+1$ with fixed critical points. If $P$ is a monic centered polynomial of degree $m+1$ with fixed critical points, then there exists $x\in E$ such that $H(x)=x$, and $P_x=P$.
\end{proposition}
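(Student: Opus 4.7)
The proof splits into two near-tautological implications once the uniqueness of $P_x$ from its defining data is understood. For the forward direction, I would simply unpack the equation $H(x)=x$: it says $P_x(x_i)=x_i$ for every $i\in I$. Since the $x_i$ are, by construction, the critical points of $P_x$ counted with multiplicity, and $P_x$ is monic centered of degree $m+1$ by construction, this immediately yields a polynomial with the stated properties.

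For the converse, suppose $P$ is monic, centered, of degree $m+1$, with all its critical points fixed. Label the critical points (with multiplicities) by $I$ to form a function $x:I\to \C$, $x(i)\eqdef x_i$. I would first check that $x$ lies in $E$: expanding $P'(t)=(m+1)\prod_{i\in I}(t-x_i)$ and integrating, the condition that $P$ have no $t^m$ term is equivalent to $\sum_{i\in I} x_i=0$, so $x\in E$.

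The key step is then to identify $P$ with $P_x$. Both polynomials are monic of degree $m+1$ and share the same derivative $(m+1)\prod_{i\in I}(t-x_i)$, so $P-P_x=c$ for some constant $c\in \C$. To pin down $c=0$, I would compare the two sums $\sum_{i\in I} P(x_i)$ and $\sum_{i\in I} P_x(x_i)$: the first equals $\sum_{i\in I} x_i=0$ because the critical points are fixed, and the second vanishes by a short direct calculation from the defining formula for $P_x$ (this is precisely the condition that $H(x)$ always lies in $E$). Hence $mc=0$, so $P=P_x$, and then $H(x)(i)=P_x(x_i)=P(x_i)=x_i=x(i)$ gives $H(x)=x$.

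I expect the main (and only) nonformal ingredient to be the identity $\sum_{i\in I} P_x(x_i)=0$, which is built into the normalization $\tfrac{m+1}{m}$ and the symmetric double summation in the definition of $P_x$; concretely, if $Q$ denotes an antiderivative of $\prod_{i\in I}(w-x_i)$, then $P_x(x_k)=(m+1)Q(x_k)-\tfrac{m+1}{m}\sum_{j\in I}Q(x_j)$, and summing over $k$ telescopes to $0$. Everything else in the proof is either a direct substitution or a standard observation about monic polynomials sharing a derivative.
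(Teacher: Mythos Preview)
Your proof is correct and follows essentially the same route as the paper's, just with more detail supplied. The paper's proof is very terse: for the converse it simply asserts that the function $x$ labelling the critical points of $P$ satisfies $x\in E$ and $P_x=P$, relying on the uniqueness characterization of $P_x$ stated just before the proposition (monic, centered, degree $m+1$, prescribed critical points, barycenter of critical values equal to $0$). Your argument makes this explicit by matching derivatives and then using $\sum_{i\in I}P_x(x_i)=0$ to kill the constant, which is exactly the barycenter condition unpacked.
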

\begin{proof}
First let $x$ be a fixed point of $H$. The polynomial $P_x$ is monic, centered, and of degree $m+1$. The critical points of $P_x$ are the $x_i$, and $x_i=P_x(x_i)$ for all $i\in I$.

Conversely, let $P$ be a monic centered polynomial of degree $m+1$ with fixed critical points. The polynomial $P$ has exactly $m$ critical points (counted with multiplicity). Let $x$ be a surjection from $I$ to the critical set of $P$ such that for each critical point $c\in {\cal{C}}_P$, the cardinality of $x^{-1}(c)$ is the multiplicity of $c$ as a critical point of $P$. Then $x\in E$, and $P_x = P$, and  $x$ is a fixed point of $H$. 
\end{proof}

A set of particular interest is the noninjectivity locus  
\[
\Delta\eqdef \{x\in E :\exists\; i\neq j\text{ with }x_i=x_j\}.
\]
By proposition \ref{prop_fixedpoints}, the fixed points of $H$ correspond to polynomials with fixed critical points: the fixed points in $E-\Delta$ correspond to polynomials with simple critical points, whereas the fixed points in $\Delta$ correspond to polynomials with at least one multiple critical point. 

The locus $\Delta$ is a union of hyperplanes which are invariant by $H$; it is a stratified space where each stratum is invariant. More precisely, denote by $\Part$ the set of all partitions of $I$, and set $\part=\Part-\{I\}$. Let $\I\in\part$ be the singleton partition of $I$: that is 
\[
\I\eqdef\bigl\{\{i\}:i\in I\bigr\}.
\]
Given $\J\in\part$, let $L_\J\subseteq E$ be the linear space defined by 
\[
L_\J\eqdef\{x\in E : x \text{ is constant on each element of }\J\}.
\]
Note that $E=L_\I$ and the dimension of $L_\J$ is $|\J|-1$. We say that $\K\in\part$ is a contraction of $\J\in\part$ if all elements of $\K$ are unions of elements of $\J$. We denote this as $\K\preceq\J$, and if $\K\neq\J$, we use the notation $\K\prec\J$. When $\K$ is a contraction of $\J$, the linear space $L_\K$ is contained in $L_\J$, and the codimension of $L_\K$ in $L_\J$ is $|\J|-|\K|$. 
The stratification of $\Delta$ is given by 
\[
\Delta=\bigcup_{\J\in\part-{\cal I}} L_\J.
\]
If $x\in L_\J$, then $H(x)=P_x\circ x$ is constant on each element of $\J$. Thus $H(L_\J)\subseteq L_\J$. In particular, $h:\P(E)\to \P(E)$ restricts an endomorphism $h:\P(L_\J)\to\P(L_\J)$.
For $\J\in\part$, define the set 
\[
\Delta_\J\eqdef\bigcup_{{\K\in\part}\atop {\K\prec \J}} L_\K.
\]

\begin{remark}\label{newton_uniqueness}
Assume $x$ is a fixed point of $H$ in $E-\Delta$; then by proposition  \ref{prop_fixedpoints}, $P_x$ has $m$ fixed critical points. The polynomial $P_x$ is of degree $m+1$, so there is a unique (repelling) fixed point of $P_x$ which is not critical. Moreover, since $P_x$ is centered and $m+1\geq 3$, the fixed points of $P_x$ are also centered; this implies that this fixed point is at 0. 

Consequently, the rational map $f:w\mapsto 1/P_x(1/w)$ has degree $m+1$, a repelling fixed point at $\infty$, and $m+1$ superattracting fixed points. This map is therefore the Newton's method of a polynomial $Q$ of degree $m+1$. 

The critical points of $f$ are the zeroes of $Q$, and the zeroes of $Q''$. Since $f$ has a critical point of multiplicity $m$ at 0, $Q''(w)=aw^{m-1}$ for some $a\in\C^*$ and $Q$ vanishes at 0. An elementary computation then shows that 
\[
P_x(z)=\frac{m+1}{m} z+z^{m+1}.
\]
This polynomial is indeed one with simple critical points which are fixed. So $H$ has exactly $m!$ fixed points in $E-\Delta$. 
\end{remark}

\begin{figure}[h] 
   \centering
   \includegraphics[width=2.5in]{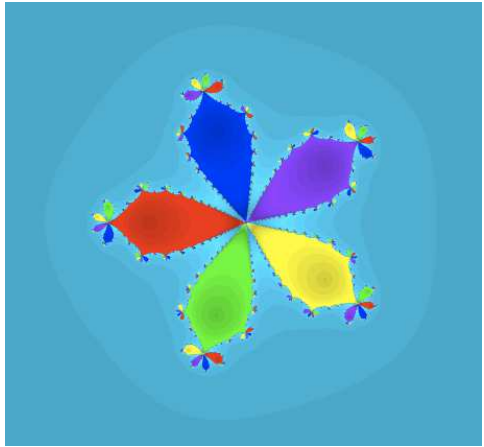} 
   \caption{The polynomial $P(z)=\frac{6}{5}z+z^6$ has five critical points, each of which is a superattracting fixed point of $P$. There is a repelling fixed point at 0. The polynomial $P$ is conjugate to a Newton's method for finding roots of some polynomial $Q$ as discussed in remark \ref{newton_uniqueness}.}
   \label{fig:example}
\end{figure}

\subsection{The endomorphisms are postcritically finite}

Our goal now is to prove that the endomorphisms $h:\P(L_\J)\to\P(L_\J)$ are postcritically finite: we will identify the critical locus of $H:L_\J\to L_\J$, and show that it is invariant. For this we will use the following observation. 

Let $J$ be a nonempty proper subset of $I$. Let $E_J$ be the set of functions $J\to \C$ whose average is $0$. There is a natural projection $\pi_J:E\to E_J$ given by 
\[
E\ni x\longmapsto x|_{J}-\text{Average}(x|_{J})\in E_J.
\]
Let $H_J:E_J\to E_J$ be the homogeneous map constructed as above with the set $J$ instead of the set $I$. 
\begin{lemma}\label{lemma_expansion}
Assume $\J\in\part$, $x\in L_\J-\Delta_\J$, and $J\in\J$. Then, as $v \to 0$ in $E$, we have the following expansion
\[
\pi_J\circ H (x+v)=C_J\cdot H_J\circ\pi_J(v)+ {\cal{O}}\bigl(\|v\|\cdot \|\pi_J(v)\|^{|J|+1}\bigr)\quad \text{with}\quad C_J\in \C-\{0\}.
\]
 \end{lemma}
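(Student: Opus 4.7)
Since $x\in L_\J-\Delta_\J$, the function $x$ takes a constant value $c_K$ on each $K\in\J$, and the $c_K$ are pairwise distinct; let $c_J$ denote the value on $J$. I would begin by writing the explicit derivative
\[
P_{x+v}'(t)=(m+1)\prod_{i\in I}\bigl(t-(x+v)_i\bigr),
\]
and splitting the product according to the partition $\J$. Evaluating at $t=c_J+u$ yields
\[
P_{x+v}'(c_J+u)=(m+1)\prod_{i\in J}(u-v_i)\cdot Q(u,v),\qquad Q(u,v)\eqdef\prod_{K\in\J,\,K\neq J}\prod_{i\in K}(c_J-c_K+u-v_i).
\]
In particular, $Q(0,0)=C\eqdef\prod_{K\neq J}(c_J-c_K)^{|K|}$ is a nonzero constant.

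Next, let $\bar v_J\eqdef \frac{1}{|J|}\sum_{i\in J}v_i$ and $w\eqdef\pi_J(v)\in E_J$, so that $w_i=v_i-\bar v_J$ for $i\in J$. The substitution $u=\bar v_J+s$ together with the identity $(P_w^{(J)})'(s)=(|J|+1)\prod_{i\in J}(s-w_i)$ (which comes from the definition of $H_J$) yields
\[
P_{x+v}'(c_J+\bar v_J+s)=\frac{m+1}{|J|+1}\,Q(\bar v_J+s,v)\,(P_w^{(J)})'(s).
\]
Fix $i_0\in J$. Integrating, one obtains $H(x+v)_i-H(x+v)_{i_0}=\int_{w_{i_0}}^{w_i}P_{x+v}'(c_J+\bar v_J+s)\,ds$, while by construction $H_J(w)_i-H_J(w)_{i_0}=\int_{w_{i_0}}^{w_i}(P_w^{(J)})'(s)\,ds$.

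Setting $C_J\eqdef\frac{(m+1)\,C}{|J|+1}\in\C-\{0\}$, comparing these two identities gives
\[
H(x+v)_i-H(x+v)_{i_0}=C_J\bigl(H_J(w)_i-H_J(w)_{i_0}\bigr)+R_i,
\]
where $R_i\eqdef\frac{m+1}{|J|+1}\int_{w_{i_0}}^{w_i}(P_w^{(J)})'(s)\bigl(Q(\bar v_J+s,v)-C\bigr)\,ds$. On the integration path, $(P_w^{(J)})'(s)$ is a product of $|J|$ linear factors each of size $O(\|\pi_J(v)\|)$, the length of the path is $O(\|\pi_J(v)\|)$, and $Q(\bar v_J+s,v)-C={\cal O}(\|v\|)$ by analyticity of $Q$; this gives $R_i={\cal O}\bigl(\|v\|\cdot\|\pi_J(v)\|^{|J|+1}\bigr)$. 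Since $\pi_J\circ H(x+v)$ and $C_J\,H_J(w)$ both lie in $E_J$ and are therefore centered, the identity on pairwise differences lifts directly to the asymptotic claimed in the lemma. The only real obstacle is pure bookkeeping: matching the two normalizations (through the factor $\frac{m+1}{|J|+1}$ and the constant $C$), and checking that averaging over $i\in J$ preserves the uniform-in-$i$ error bound.
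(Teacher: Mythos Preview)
Your proof is correct and follows essentially the same route as the paper: factor $P_{x+v}'$ into the $J$-block $(P_w^{(J)})'$ times the remaining factor (the paper's $R_y$, your $\tfrac{m+1}{|J|+1}Q$), integrate between critical values in $J$ to compare pairwise differences of $H(x+v)$ with those of $H_J(w)$, bound the remainder exactly as you do, and then average to pass to the centered projection. The only cosmetic difference is that you fix a basepoint $i_0$ while the paper averages over one index directly; your constant $C_J=\tfrac{(m+1)C}{|J|+1}$ coincides with the paper's $C_J=R_x(a_x)$.
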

\begin{proof}
Without loss of generality, assume that $m_J\eqdef |J|\geq 2$ since otherwise $E_J$ has dimension 0, and the result is vacuous. 
Set  $y\eqdef x+v$, $a_y\eqdef\text{Average}(y|_{J})$ and $z\eqdef\pi_J(v)=\pi_J(y)=y|_{J}-a_y$. Let $Q_z$ be the polynomial defined by
\[
Q_z(t)\eqdef \frac{m_J+1}{m_J}\; \sum_{j\in J} \;\int_{z_j}^t\; \prod_{i\in J} (w-z_i)\;\mathrm{d} w.
\]
Then
\[
P'_y(t)=(m+1)\prod_{i\in I} (t-y_i)\quad\text{and}\quad Q'_z(t-a_y)=(m_J+1)\prod_{i\in J} (t-y_i).
\]
Thus 
\[
P'_y(t)=Q'_z(t-a_y)\cdot R_y(t)\quad\text{with}\quad R_y(t)=\frac{m+1}{m_J+1}\prod_{i\in I-J} (t-y_i).
\]
Since $x\in L_\J-\Delta_\J$, for all $i\in I-J$ we have $x_i\neq a_x$, thus $R_x(a_x)\neq 0$.
For all $i\in J$ and $j\in J$, as $y\to x$, we have:
\begin{itemize}
\item $|y_j-y_i|=|z_j-z_i|={\cal{O}}\bigl(\|z\|\bigr)$,\\
\item $\underset{t\in [y_i,y_j]} \sup |Q_z'(t-a_y)|={\cal{O}}\bigl(\|z\|^{m_J}\bigr)$, and\\
\item $\underset{t\in [y_i,y_j]}\sup|R_y(t)-R_x(a_x)|={\cal{O}}\bigl(\|v\|\bigr)$.
\end{itemize}
Thus for all $i\in J$ and $j\in J$, 
\begin{align*}
 P_y(y_j)-P_y(y_i) & -\bigl(Q_z(z_j)-Q_z(z_i)\bigr)R_x(a_x)\\
 &=\int_{y_i}^{y_j} P'_y(t)-Q'_z(t-a_y)R_x(a_x)\;\mathrm{d}t\\
&=\int_{y_i}^{y_j} Q'_z(t-a_y)\bigl(R_y(t)-R_x(a_x)\bigr)\;\mathrm{d}t\in {\cal{O}}\bigl(\|v\|\cdot \|z\|^{m_J+1}\bigr).
\end{align*}
Setting $C_J\eqdef R_x(a_x)$, we deduce that for all $j\in J$, 
\begin{align*}P_y(y_j) - \frac{1}{m_J+1} \sum_{i\in J} P_y(y_i) &= C_J\cdot \Big(Q_z(z_j) - \frac{1}{m_J+1} \sum_{i\in J} Q_z(z_i)\Big) 
+ {\cal{O}}\bigl(\|v\|\cdot \|z\|^{m_J+1}\bigr)\\
&=C_J\cdot Q_z(z_j)+ {\cal{O}}\bigl(\|v\|\cdot \|z\|^{m_J+1}\bigr).
 \end{align*}
 (Recall that $\sum_{i\in J} Q_z(z_i)=0$ since $Q_z$ is centered).
The lemma follows since 
\[\pi_J\circ H = P_y\circ y|_{J} - \text{Average}(P_y\circ y|_{J})\quad \text{and}\quad H_J\circ \pi_J(y) = H_J(z) = Q_z\circ z.\qedhere\]
\end{proof}

\begin{proposition}\label{critprop}
For $\J\in\part$, the critical set of $H:L_\J\to L_\J$ is $\Delta_\J$, 
$H(\Delta_\J)=\Delta_\J$, and $h:\P(L_\J)\to\P(L_\J)$ is postcritically finite. 
\end{proposition}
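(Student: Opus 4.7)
The statement has three claims: identification of the critical set ${\cal C}_H$ of $H:L_\J\to L_\J$, the invariance $H(\Delta_\J)=\Delta_\J$, and postcritical finiteness of $h$. The plan is to treat them in this order, the critical-set identification being the main step. The key difficulty I expect is the invertibility of $D_xH|_{L_\J}$ at $x\in L_\J-\Delta_\J$.

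For the inclusion $\Delta_\J\subseteq{\cal C}_H$: if $x=0$ then $D_0H=0$ because $H$ is homogeneous of degree $m+1\geq 3$, so $x$ is critical. Otherwise, let $\K_x\in\part$ be the partition determined by $i\sim j\iff x_i=x_j$; then $\K_x\prec\J$ and $x\in L_{\K_x}-\Delta_{\K_x}$. Apply lemma \ref{lemma_expansion} at $x$ with partition $\K_x$: for each $K\in\K_x$ the expansion
\[\pi_K\circ H(x+v)=C_K H_K(\pi_K v)+{\cal O}\bigl(\|v\|\cdot\|\pi_K v\|^{|K|+1}\bigr)\]
carries no linear term in $v$, so $D_x(\pi_K\circ H)\equiv 0$. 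Since $\bigcap_{K\in\K_x}\ker\pi_K=L_{\K_x}$, the image of $D_xH$ lies in $L_{\K_x}$; hence $D_xH|_{L_\J}$ has rank at most $|\K_x|-1<\dim L_\J$, and $x$ is critical.

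For the reverse inclusion, take $x\in L_\J-\Delta_\J$ and parametrize $L_\J$ by distinct values $(\alpha_K)_{K\in\J}$ satisfying $\sum_K|K|\alpha_K=0$. From $H(y)=P_y\circ y$ and $P_x'(x_i)=0$, one computes $(D_xH\cdot v)_i=\dot P_v(x_i)$ where $\dot P_v\eqdef\partial_\epsilon P_{x+\epsilon v}|_{\epsilon=0}$, so the kernel condition for $v\in L_\J$ reads $\dot P_v(\alpha_K)=0$ for every $K\in\J$. Differentiating $P_y'(t)=(m+1)\prod_i(t-y_i)$ gives
\[\dot P_v'(t)=-(m+1)\,\Pi(t)\sum_{K\in\J}\frac{v_K|K|}{t-\alpha_K},\qquad \Pi(t)=\prod_K(t-\alpha_K)^{|K|},\]
and a local expansion at $t=\alpha_L$ shows $\dot P_v'$ vanishes there to order at least $|L|-1$. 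Combined with $\dot P_v(\alpha_L)=0$, the polynomial $\dot P_v$ vanishes at each $\alpha_L$ to order at least $|L|$. Its coefficient of $t^m$ is proportional to $\sum_i v_i=0$, so $\deg\dot P_v\leq m-1$; yet its zero count already saturates at $\sum_L|L|=m$, forcing $\dot P_v\equiv 0$. Then $\dot P_v'\equiv 0$, and a partial-fractions argument on $\sum_K v_K|K|/(t-\alpha_K)\equiv 0$ yields $v_K=0$ for every $K$, so $v=0$. Thus $D_xH|_{L_\J}$ is injective and hence invertible.

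The remaining claims follow quickly. Each stratum $L_\K$ with $\K\prec\J$ satisfies $H(L_\K)\subseteq L_\K$ (already noted before the proposition), and since $H|_{L_\K}$ is a nondegenerate homogeneous self-map between linear spaces of equal dimension, it descends to a surjective endomorphism of $\P(L_\K)$; hence $H(L_\K)=L_\K$ and $H(\Delta_\J)=\Delta_\J$. Postcritical finiteness of $h:\P(L_\J)\to\P(L_\J)$ is then immediate: the critical locus projectivizes to $\bigcup_{\K\prec\J}\P(L_\K)$, each $\P(L_\K)$ is $h$-invariant, so ${\cal P}_h\subseteq\P(\Delta_\J)$, which is algebraic. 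The delicate point of the whole argument is precisely the collision between the simple pole $1/(t-\alpha_L)$ in the partial-fraction sum and the $|L|$-fold zero of $\Pi$ at $\alpha_L$: it is this interaction that forces $\dot P_v$ to accumulate total vanishing order saturating the degree bound.
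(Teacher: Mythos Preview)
Your proof is correct, but it takes a genuinely different route from the paper for the main step (identifying the critical set). The paper shows $\Delta_\J\subseteq{\cal C}_H$ \emph{with multiplicities}: for each pair $J_1,J_2\in\J$ the codimension-one stratum $L_{\K(J_1,J_2)}$ carries Jacobian vanishing of order at least $|J_1|+|J_2|$, by the expansion lemma; summing, the hypersurface $\Delta_\J$ has degree $(|\J|-1)m$, which already saturates the degree of the critical locus of a degree-$(m+1)$ homogeneous map on a space of dimension $|\J|-1$. Equality follows by this global degree count. You instead establish the reverse inclusion pointwise, proving directly that $D_xH|_{L_\J}$ is injective at every $x\in L_\J-\Delta_\J$ via the explicit computation with $\dot P_v$ and the order-of-vanishing argument. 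The paper's approach is shorter and yields as a byproduct the multiplicities of the critical components; your approach is more elementary (no degree bookkeeping, just linear algebra and the structure of $\dot P_v$) and gives a concrete description of why the derivative is invertible, which is exactly what is needed later in Proposition~\ref{prop_spec}. Your treatment of the remaining claims (invariance via surjectivity of $h$ on each $\P(L_\K)$, and postcritical finiteness) is slightly more explicit than the paper's, which simply declares them ``an immediate consequence''.
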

\begin{proof}
It is enough to prove that the critical set of $H:L_\J\to L_\J$ is $\Delta_\J$ as the rest is an immediate consequence. Given $J_1,J_2\in\J$, set 
\[
K\eqdef J_1\cup J_2 \quad\text{and}\quad\K=\K(J_1,J_2)\eqdef \J-\{J_1,J_2\}\cup\{K\}.
\]
Note that $\K\prec\J$ and $|\K|=|\J|-1$, so that $L_\K$ has codimension 1 in $L_\J$. 
Choose a vector $v\in L_\J-L_\K$ so that $L_\J=L_\K\oplus\mathrm{Span}(v)$.  According to lemma \ref{lemma_expansion}, for all $x\in L_\K$ as $t\to 0$ we have 
\[
\pi_K\circ H(x+tv)=\cal{O}(t^{|K|+1}).
\]
It follows that the Jacobian of $H:L_\J\to L_\J$ vanishes with order at least $|K|$ along $L_\K$. 

Now 
\[
\Delta_\J=\bigcup_{J_1,J_2\in\J\atop J_1\neq J_2} L_{\K(J_1,J_2)}
\]
and $L_{\K(J_1,J_2)}$ is a critical component of $H:L_\J\to L_\J$ with multiplicity $|J_1|+|J_2|$. So $\Delta_\J$ is contained in the critical set of $H:L_\J\to L_\J$ as a hypersurface of total degree 
\[
\sum_{J_1,J_2\in\J\atop J_1\neq J_2} |J_1|+|J_2|=\bigl(|\J|-1\bigr)\cdot \sum_{J\in\J} |J|=\bigl(|\J|-1\bigr)\cdot m.
\]
Since $H:L_\J\to L_\J$ is a homogeneous map of degree $m+1$, its critical locus is a hypersurface of total degree $\mathrm{dim}(L_\J)\cdot m=\bigl(|\J|-1\bigr)\cdot m$. And therefore the critical set of $H:L_\J\to L_\J$ coincides with $\Delta_\J$ as required. 
\end{proof}
We now analyze the spectrum of $D_x H:T_x E\to T_x E$, where $x$ is a fixed point of $H:E\to E$. It turns out that for the fixed points in $E-\Delta$, we have a complete understanding of this spectrum as outlined in the example below. 

\begin{example} 
We now demonstrate by way of example that the eigenvalues of $D_x H$ at a fixed point $x\in E-\Delta$ are precisely $\lambda_k\eqdef (m+1)/k,k\in[1,m-1]$, with corresponding eigenspace $\mathrm{Span}(x^k)$.\footnote{Recall that $x:I\to \C$ is a function and $x^k$ is the function $i\mapsto x_i^k\in\C$.}

According to remark \ref{newton_uniqueness}, there is a unique polynomial 
\[
P(z)=\frac{m+1}{m}z+z^{m+1}
\]
 which is monic and centered, with simple fixed critical points, corresponding to a fixed point of $H$ in $E-\Delta$. Observe that if $c$ is a critical point of $P$, then $P''(c)=-(m+1)/c$. 

Now if $P_t(z)=P(z)+tQ(z)+o(t)$ with $Q(z)\in\C_{m-1}[z]$, and if $c_t=c+tv+o(t)$ is a critical point of $P_t$, then 
\[
0=P_t'(c_t)=P'(c)+t(P''(c)v+Q'(c))+o(t)
\]
so that 
\[
v=-\frac{Q'(c)}{P''(c)}=\frac{c}{m+1}Q'(c),
\]
and 
\[
P_t(c_t)=P(c)+tQ(c)+tP'(c)v+o(t)=c+tQ(c)+o(t).
\]
Therefore $v\in T_x E$ is an eigenvector associated to the eigenvalue $\lambda$ if and only if 
\[
\forall\; i\in I, \quad Q(x_i)=\lambda v_i=\frac{\lambda x_i}{m+1} Q'(x_i).
\]
This is clearly true if $Q(z)=z^k$, $\lambda=(m+1)/k$, and $v_i=x_i^k/\lambda$. 
\end{example}

\subsection{Fixed points are super-saddles}
\begin{proposition}\label{prop_spec}
Any fixed point of $H:E\to E$ is a super-saddle. More precisely, if $x$ is a fixed point of $H:E\to E$, then:
\begin{itemize}
\item $T_xE=\mathrm{Ker}(D_x H)\oplus \mathrm{Im}(D_x H)$
\end{itemize}
and if $\J\in\part$ and $x\in L_\J-\Delta_\J$ then:
\begin{itemize}
\item $\mathrm{Im}(D_x H)=T_xL_\J$, and 
\item the spectrum of $D_x H:T_xL_\J\to T_xL_\J$ is contained in $\C-\overline\D$ and therefore $x$ is a repelling fixed point of $H:L_\J\to L_\J$. 
\end{itemize}
\end{proposition}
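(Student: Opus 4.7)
The plan is to address the three assertions in the order they are stated, making essential use of lemma \ref{lemma_expansion} to determine $\operatorname{im}(D_xH)$.

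\emph{First step: the image lies in $T_xL_\J$.} For each $J \in \J$, lemma \ref{lemma_expansion} yields $\pi_J \circ H(x+v) = C_J \cdot H_J(\pi_J(v)) + {\cal O}\bigl(\|v\| \cdot \|\pi_J(v)\|^{|J|+1}\bigr)$, and since $H_J$ is homogeneous of degree $|J|+1 \geq 2$ the right-hand side is ${\cal O}(\|v\|^2)$. Because $\pi_J(x) = 0$ (as $x \in L_\J$), this expansion has no linear term in $v$, so $\pi_J \circ D_xH \equiv 0$ for every $J \in \J$. Intersecting over $J$ gives $\operatorname{im}(D_xH) \subseteq \bigcap_{J \in \J} \ker \pi_J = L_\J = T_xL_\J$.

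\emph{Second step: the spectrum of $D_xH|_{T_xL_\J}$ lies outside the closed unit disk.} I would imitate the variational computation of the example, applied to the homogeneous map $H|_{L_\J}:L_\J \to L_\J$ of degree $m+1$. A tangent vector $v \in T_xL_\J$ is encoded by block-constant values $(b_\ell)_{\ell\in\J}$ with $\sum_\ell n_\ell b_\ell = 0$ (where $n_\ell = |J_\ell|$), and the perturbed polynomial $P_{x+sv}$ has critical points $a_\ell + s b_\ell + {\cal O}(s^2)$ of the same multiplicities $n_\ell$. Using $P_x'(a_\ell) = 0$, one finds that $D_xH(v)$ corresponds to the values $Q(a_\ell)$, where $Q(z) = (d/ds)|_{s=0}\, P_{x+sv}(z)$ is a polynomial of degree at most $m-1$ satisfying $Q'(z) = -\sum_\ell n_\ell b_\ell \cdot P_x'(z)/(z-a_\ell)$. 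The eigenvalue problem $D_xH(v) = \lambda v$ on $T_xL_\J$ thus reduces to $Q(a_\ell) = \lambda b_\ell$ for all $\ell \in \J$. Following the pattern of the example and guided by Euler's identity (which already produces the eigenvalue $m+1$ with eigenvector $x$), I expect the eigenvalues to be of the form $\lambda = (m+1)/k$ for integers $k$ satisfying $1 \leq k \leq |\J|-1 \leq m-1$, all of modulus strictly greater than $1$.

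\emph{Third step and main obstacle.} The first two steps together give $\operatorname{im}(D_xH) = T_xL_\J$ with $D_xH|_{T_xL_\J}$ invertible, so $\ker(D_xH) \cap T_xL_\J = \{0\}$ and rank-nullity yields $T_xE = \ker(D_xH) \oplus T_xL_\J = \ker(D_xH) \oplus \operatorname{im}(D_xH)$; the case $x=0$ being trivial since $D_0H = 0$. The main obstacle is the eigenvalue analysis of the second step when $\J \neq \I$: the polynomial $P_x$ then has critical points of multiplicity $n_\ell > 1$, and the clean ansatz $Q(z) = z^k$ used in the example no longer produces valid eigenvectors directly, because the identity $Q'(z) = -\sum_\ell n_\ell b_\ell P_x'(z)/(z-a_\ell)$ forces zeros of $Q'$ of order at least $n_\ell - 1$ at each $a_\ell$ that are incompatible with pure powers when $n_\ell \geq 2$. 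One route is to interpret $H|_{L_\J}$ as an instance of a weighted analogue of the original construction, on the set $\J$ with weights $(n_\ell)$, and rerun the Newton's-method and Euler-type identities in that setting; another is to exhibit an adapted basis of eigenvectors built from polynomials of the form $P_x^{(j)}$ or from combinations of $z^k$ corrected by lower-order terms chosen to kill the unwanted residues at the multiple critical points. Either way, the essential qualitative fact to extract is that every eigenvalue of $D_xH|_{T_xL_\J}$ strictly exceeds $1$ in modulus.
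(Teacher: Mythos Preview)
Your first step and the rank-nullity argument in the third step are correct and match the paper. However, there is a genuine gap in your second step, and you yourself identify it: the explicit eigenvalue computation does not go through for general $\J$. You suggest two possible workarounds (a weighted analogue of the construction, or an adapted basis built from corrected power functions), but neither is carried out, and in fact the paper does not attempt anything of this sort.

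The paper's route is entirely different. First, invertibility of $D_xH$ on $T_xL_\J$ is obtained directly from Proposition~\ref{critprop}: since the critical set of $H:L_\J\to L_\J$ is exactly $\Delta_\J$ and $x\notin\Delta_\J$, the restriction $D_xH|_{T_xL_\J}$ is an isomorphism. Combined with your first step this already gives $\operatorname{im}(D_xH)=T_xL_\J$ and the direct-sum decomposition, without any spectral information. Second, and this is the key idea you are missing, the spectral assertion is proved not by computing eigenvalues but by a contraction argument. The paper identifies $(T_xL_\J)^*$ with a space $\mathcal Q_x$ of meromorphic quadratic differentials on $\P^1$ with simple poles on $x(I)$, and shows (Proposition~\ref{prop:transpose}) that the transpose of $(D_xH)^{-1}$ is the push-forward operator $(P_x)_*$ on $\mathcal Q_x$, which is a strict contraction for the $L^1$ norm $\|q\|=\int_{D_R}|q|$ on a large disk. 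Hence the spectral radius of $(D_xH)^{-1}$ is less than $1$, and the spectrum of $D_xH|_{T_xL_\J}$ lies in $\C-\overline\D$. This Thurston-type argument works uniformly for all $\J$ and bypasses the difficulty you encountered with multiple critical points.
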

\begin{proof}
According to proposition \ref{critprop}, $x$ is not a critical point of $H:L_\J\to L_\J$, thus the restriction $D_x H:T_xL_\J\to T_xL_\J$ is invertible. As a consequence the image of $D_x H$ contains $T_xL_\J$ and $\mathrm{Ker}(D_x H)\cap T_xL_\J=\{0\}$. The kernel of the projection 
\[
\pi\eqdef \sum_{J\in\J} \pi_J:T_xE\to \bigoplus_{J\in\J} T_0E_J
\]
is $T_xL_\J$. According to lemma \ref{lemma_expansion} $\pi_J\circ D_x H=0$ for all $J\in\J$. So $\pi\circ D_x H=0$, and the image of $D_x H$ is contained in $\mathrm{Ker}(\pi)=T_xL_\J$. This implies that $\mathrm{Im}(D_x H)=T_xL_\J$. In addition the codimension of $\mathrm{Ker}(D_x H)$ is the dimension of $T_xL_\J$ and since $\mathrm{Ker}(D_x H)\cap T_xL_\J=\{0\}$, we conclude that $T_xE=\mathrm{Ker}(D_x H)\oplus \mathrm{Im}(D_x H)$. 

Since $H:L_\J\to L_\J$ is homogeneous of degree $m+1$, there is an obvious eigenvalue $m+1$ associated to the eigenspace $\mathrm{Span}(x)$. 
Proposition \ref{prop:transpose} below asserts that we can endow $T_xL_\J$ with an appropriate norm so that  the linear map $(D_x H)^{-1}: T_xL_\J\to T_xL_\J$ is contracting. As a consequence,  the spectrum of $(D_x H)^{-1}: T_xL_\J\to T_xL_\J$ is contained in $\D$ and thus, the spectrum of the linear map $D_x H:T_xL_\J\to T_xL_\J$ is contained in $\C-\overline\D$. 
\end{proof}
By definition $E$ is a subspace of $\C^I$ of codimension 1. It is the kernel of the linear form 
\[
e^I:E\ni v\mapsto \frac{1}{|I|}\sum_{i\in I} v_i\in \C.
\]
As a consequence $E^*$ may be identified with the quotient $(\C^I)^*/\mathrm{Span}(e^I)$. 
Let ${\bf 1}:I\to \C$ be the function which is constant and equal to 1. Since $\C^I=E\oplus\mathrm{Span}({\bf 1})$, the dual space $E^*$ may also be identified with the orthogonal space 
\[
\bigl(\mathrm{Span}({\bf 1})\bigr)^\perp = \bigl\{\alpha\in(\C^I)^*:\alpha({\bf 1})=0\bigr\}.
\]
More generally, given $J\subseteq I$ let $e^J\in (\C^I)^*$ be the linear form defined by 
\[
e^J(v)= \frac{1}{|J|}\sum_{j\in J} v_j.
\]
Note that for $\J\in\part$, 
\[
E=L_\J\oplus \bigcap_{J\in\J} \mathrm{Ker}(e^J)
\]
and therefore $L_\J^*$ may be identified with
\[
\bigl(\mathrm{Span}({\bf 1})\bigr)^\perp \cap \mathrm{Span}(e^J;J\in\J)=\left\{\sum_{J\in\J}\lambda_Je^J:\lambda_J\in\C\text{ and }\sum_{J\in\J}\lambda_J=0\right\}.
\]
If $\displaystyle\alpha=\sum_{J\in\J} \lambda_J e^J\in L_\J^*$ then the pairing with $v\in L_\J$ is given by
\[
\alpha(v)=\sum_{J\in \J}\lambda_J v_J\quad\text{with}\quad v(J)=\{v_J\}.
\]

Given $x\in L_\J-\Delta_\J$ we will now identify $(T_x L_\J)^*$ with a space of quadratic differentials, equip this space with an appropriate norm then identify the transpose of $(D_x H)^{-1}:T_x L_\J\to T_xL_\J$, and finally show that this transpose is strictly contracting. So for $J\in\J$, let $q_J$ be the quadratic differential defined on $\C$ by
\[
q_J=\frac{dz^2}{z-x_J}.
\]
Set  
\[
{\cal Q}_x\eqdef \left\{\sum_{J\in\J} \lambda_J q_J:\sum_{J\in\J} \lambda_J=0\right\}.
\]
A quadratic differential $q=\sum \lambda_J q_J \in{\cal Q}_x$ may be paired with a tangent vector $v\in T_x L_\J$ as follows
\[
\left<q,v\right>\eqdef \sum_{J\in\J} \lambda_J v_J=\sum_{J\in\J} \mathrm{Res}_{x_J}(q\cdot \xi_v)
\]
where $\xi_v$ is any holomorphic vector field near $x(I)$ which takes the value $v_J$ at $x_J$. According to the previous discussion, this pairing gives an identification of $(T_x L_\J)^*$ with ${\cal Q}_x$. 

Choose $R$ large enough so that $P_x^{-1}(D_R)$ is compactly contained in $D_R$, where $D_R$ is the disk centered at 0 of radius $R$. We equip ${\cal Q}_x$ with the $L^1$ norm 
\[
\|q\|\eqdef \int_{D_R} |q|.
\]
\begin{proposition}\label{prop:transpose}
Assume $x\in L_\J-\Delta_\J$. The transpose of the linear mapping $(D_x H)^{-1}:T_x L_\J\to T_x L_\J$ is identified with the push-forward operator 
\[
(P_x)_*:{\cal Q}_x\ni q\mapsto \sum g^* q\in {\cal Q}_x
\]
where  $g$ ranges over the inverse branches of $P_x$. In addition,
\[
\forall \;q\in{\cal Q}_x\quad\;\bigl\|(P_x)_* q\bigr\| < \|q\|.
\]
\end{proposition}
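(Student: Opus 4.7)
The plan is to prove the two assertions in order: first identify the transpose algebraically via a residue calculation, then derive the strict contraction by a standard area estimate. Throughout, $x$ should be read as a fixed point of $H$ in $L_\J-\Delta_\J$ (which is the context in which the proposition is invoked), so that $P_x(x_J)=x_J$ for every $J\in\J$ and the critical values of $P_x$ coincide set-theoretically with its critical points.

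For the transpose identification, I would begin by computing $D_xH$. Given $v\in T_xL_\J$, the chain rule applied to $H(x+sv)_J = P_{x+sv}(x_J+sv_J)$ together with $P_x'(x_J)=0$ gives
\[
(D_xH\,v)_J \;=\; \dot P(x_J),\qquad \dot P \eqdef \frac{d}{ds}\Big|_{s=0}P_{x+sv}.
\]
Differentiating $P'_{x+sv}(t)=(m+1)\prod_i(t-x_i-sv_i)$ at $s=0$ and using that $v$ is constant on each block of $\J$ yields the key identity
\[
\dot P'(t)\;=\;-\,P_x'(t)\,\chi_v(t),\qquad \chi_v(t)\eqdef \sum_{J\in\J}\frac{|J|\,v_J}{t-x_J},
\]
exhibiting $\chi_v$ as a meromorphic vector field on $\P^1$ with simple poles at the $x_J$. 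The transpose identity $((D_xH)^{-1})^* = (P_x)_*$ then reduces to showing that for every $q=\sum_J\lambda_Jq_J\in{\cal Q}_x$ and every $v\in T_xL_\J$,
\[
\sum_{J}\lambda_J\,v_J \;=\; \sum_{K}\mu_K\,\dot P(x_K),\qquad (P_x)_*q\;=\;\sum_K\mu_Kq_K.
\]
I would verify this by choosing a meromorphic vector field $\xi$ on $\P^1$ interpolating $v$ at the $x_J$ (so that $\langle q,v\rangle = \sum_J\mathrm{Res}_{x_J}(q\cdot\xi)$), applying the residue theorem on $\P^1$ to the meromorphic 1-form $q\cdot\xi$, and invoking the standard projection-formula duality $(P_x)_*(q\cdot P_x^*\eta) = (P_x)_*q\cdot \eta$ to convert the sum of residues of $q\cdot\xi$ at preimages of a critical value into a residue of $(P_x)_*q\cdot\eta$ at the critical value itself. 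A short bookkeeping step confirms that $(P_x)_*q$ genuinely lies in ${\cal Q}_x$: pushforward of a simple-pole quadratic differential under a polynomial has at worst simple poles at the critical values, and the relation $\sum_K\mu_K=0$ follows from vanishing of the sum of residues on $\P^1$ (which is itself equivalent to $\sum_J\lambda_J=0$).

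For the strict contraction I would use the standard triangle-inequality-plus-area argument. Off the critical value set the inverse branches $g_1,\dots,g_{m+1}$ of $P_x$ are defined and $(P_x)_*q = \sum_i g_i^*q$, so by the triangle inequality and the change-of-variables formula
\[
\bigl\|(P_x)_*q\bigr\|\;=\;\int_{D_R}\Bigl|\textstyle\sum_i g_i^*q\Bigr|\;\leq\;\sum_i\int_{D_R}\bigl|g_i^*q\bigr|\;=\;\int_{P_x^{-1}(D_R)}|q|\;\leq\;\int_{D_R}|q|\;=\;\|q\|.
\]
The last inequality is strict for $q\neq 0$: the assumption $P_x^{-1}(D_R)\Subset D_R$ forces $D_R\setminus P_x^{-1}(D_R)$ to have positive Lebesgue measure, and a nonzero meromorphic quadratic differential vanishes only on a measure-zero set, so $\int_{D_R\setminus P_x^{-1}(D_R)}|q|>0$.

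The main obstacle will be the residue computation in the first part. One must carefully handle the pushforward at critical values, where the naive formula $\sum q(z)/P_x'(z)^2$ is singular, and one must verify that no stray residue contribution at $\infty$ corrupts the residue transfer. The centering conditions built into both ${\cal Q}_x$ (namely $\sum_J\lambda_J=0$) and $E$ (namely $\sum_i v_i=0$, equivalently $\sum_J|J|v_J=0$) are precisely what ensures that $q$ and $\chi_v$ decay fast enough at $\infty$ for the sum-of-residues argument on $\P^1$ to close up.
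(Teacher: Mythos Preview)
Your contraction estimate is identical to the paper's. For the transpose identification, however, the paper takes a genuinely different route: rather than a direct residue computation, it uses the Thurston-rigidity machinery. Concretely, the paper chooses a $C^\infty$ isotopy $\varphi_t$ of $\C$ following the motion of the critical values $y_t=H(x+tv)$, lifts it through $P_x$ to an isotopy $\psi_t$ following the critical points $x_t=x+tv$, and then uses Stokes' theorem to rewrite the residue pairing as $\langle q,v\rangle=\frac{1}{2\pi i}\int_\C q\cdot\bar\partial\dot\psi$. The lifting relation gives $\bar\partial\dot\psi=P_x^*(\bar\partial\dot\varphi)$, and the duality $\int q\cdot P_x^*\mu=\int (P_x)_*q\cdot\mu$ for Beltrami forms immediately yields $\langle q,v\rangle=\langle(P_x)_*q,D_xH(v)\rangle$.

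Your approach---computing $(D_xH\,v)_J=\dot P(x_J)$ and the identity $\dot P'=-P_x'\,\chi_v$, then attempting a residue transfer on $\P^1$ via the projection formula---is more elementary in spirit, avoiding quasiconformal deformations entirely. The ingredients you list are correct. But the sketch stops short at the crucial step: you have not explained which meromorphic $1$-form on $\P^1$ actually links $\sum_J\lambda_Jv_J$ to $\sum_K\mu_K\dot P(x_K)$, nor how the projection formula $(P_x)_*(q\cdot P_x^*\eta)=(P_x)_*q\cdot\eta$ is invoked (what is $\eta$, and why does $P_x^*\eta$ interpolate $v$?). The paper's Beltrami argument sidesteps exactly this bookkeeping: the pairing identity $\int q\cdot P^*\mu=\int P_*q\cdot\mu$ holds for \emph{arbitrary} $L^\infty$ Beltrami forms $\mu$, so one can choose $\dot\varphi$ freely subject only to the boundary data, without ever writing down an explicit meromorphic interpolant. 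Your route can be made to work, but as written it is a plan with the hard step still to do; the paper's route trades elementariness for a clean one-line duality once the quasiconformal setup is in place.
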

\begin{proof}
The space ${\cal Q}_x$ is the set of meromorphic quadratic differentials on $\P^1(\C)$ which are holomorphic outside $x(I)$, have at most simple poles along $x(I)$ and at most a double pole at $\infty$. 
Set $P\eqdef P_x$. If $q\in {\cal Q}_x$ then $P_*q$ is a meromorphic quadratic differential on $\P^1(\C)$. Since $q$ has at most a double pole at $\infty$, $P_*q$ also has at most a double pole at $\infty$. The other poles of $P_* q$ are simple and contained in $P\bigl(x(I)\bigr)$ union the critical value set of $P$, that is $x(I)$. This shows that $P_*$ maps ${\cal Q}_x$ to ${\cal Q}_x$. In addition 
\[
\|P_* q\|=\int_{D_R}\left|\sum g^*q\right|\leq \int_{D_R}\sum |g^*q|=\int _{P^{-1}(D_R)} |q|<\int_{D_R}|q|=\|q\|.
\]
Therefore we only need to prove that for all $v\in T_x L_\J$ and all $q\in {\cal Q}_x$
\[
\left<q,v\right>=\bigl<P_* q,D_xH(v)\bigr>.
\]
Fix $v\in T_x L_\J$ and $q\in{\cal Q}_x$. 
Let $U$ be the complement in $D_R$ of pairwise disjoint closed disks centered at the points of $x(I)$, and contained in $D_R$. If $\xi$ is a $C^\infty$ vector field on $\C$ which is holomorphic outside $U$, vanishes outside $D_R$, and satisfies $\xi\circ x = v$, then
\[
\left<q,v\right>=\sum_{J\in\J} \mathrm{Res}_{x_J}(q\cdot\xi)=-\frac{1}{2\pi i} \int_{\partial U} q\cdot \xi \underset{\text{Stokes}}=\frac{1}{2\pi i }\int_U q\cdot \overline\partial \xi=\frac{1}{2\pi i }\int_\C q\cdot \overline\partial \xi.
\]
With an abuse of notation set 
\[
x_t\eqdef x+tv,\quad P_t\eqdef P_{x_t},\quad\text{and}\quad y_t\eqdef H(x_t)=P_t\circ x_t.
\]
Then, 
\[\dot x \eqdef \frac{\partial x_t}{\partial t}\Big|_{t=0} = v \quad\text{and}\quad
\dot y \eqdef \frac{\partial y_t}{\partial t}\Big|_{t=0} = D_xH(v).\]
In addition, the critical point set of $P_t$ is $x_t(I)$ and the critical value set of $P_t$ is $y_t(I)$. 
Let $\left(\varphi_t:\C\to\C\right)_{t\in(-\epsilon,\epsilon)}$ be an analytic family of $C^\infty$ diffeomorphisms such that 
\begin{itemize}
\item $\varphi_0=\mathrm{id}$,
\item $\varphi_t$ is the identity outside $D_R$,
\item $\varphi_t$ is holomorphic outside $U$ and
\item $y_t=\varphi_t\circ y$.
\end{itemize}
Note that
\[\dot\varphi\eqdef\frac{\partial\varphi_t}{\partial t}\Big|_{t=0}
\]
is a $C^\infty$ vector field on $\C$ which is holomorphic outside $U$, vanishes outside $D_R$, and satisfies $\dot\varphi\circ x = D_xH(v)$.
Since $\varphi_t$ follows the critical value set of $P_t$ we can lift the diffeomorphisms $\varphi_t:\C\to\C$ to diffeomorphisms $\psi_t:\C\to\C$ so that $\psi_0=\mathrm{id}$ and the following diagram commutes
\[
\xymatrix{
&(\C,x(I))\ar[r]^{\psi_t}\ar[d]^{P}  &(\C,x_t(I))\ar[d]^{P_t}\\
&(\C,y(I))\ar[r]^{\varphi_t}  &(\C,y_t(I))}
\]
Then, 
\[\dot\psi\eqdef\frac{\partial\psi_t}{\partial t}\Big|_{t=0}
\]
is a $C^\infty$ vector field on $\C$ which is holomorphic outside $P^{-1}(U)$, vanishes outside $D_R$, and satisfies $\dot\psi\circ x = v$.
In addition, the infinitesimal Beltrami differentials $\bar\partial \dot\varphi$ and $\bar\partial \dot \psi$ satisfy the relation 
\[\bar\partial \dot \psi=P^* (\bar\partial \dot\varphi).\]
Therefore
\[\left<q,v\right>=\int_\C q\cdot \bar\partial \dot\psi = \int_\C q\cdot P^*( \bar\partial \dot\varphi)
=\int_\C P_* q\cdot  \bar\partial \dot\varphi = \bigl<P_*q,D_xH(v)\bigr>\]
as required. 
\end{proof}
\subsection{Superattracting fixed points of $h:\P(E)\to\P(E)$} 

If $\J\in\part$ has cardinality 2, then $L_\J$ has dimension 1 and its image in $\P(E)$ is a point $a_\J$. This is a superattracting fixed point for $h:\P(E)\to\P(E)$. 

\begin{figure}[htbp] 
   \centering
   \includegraphics[width=5in]{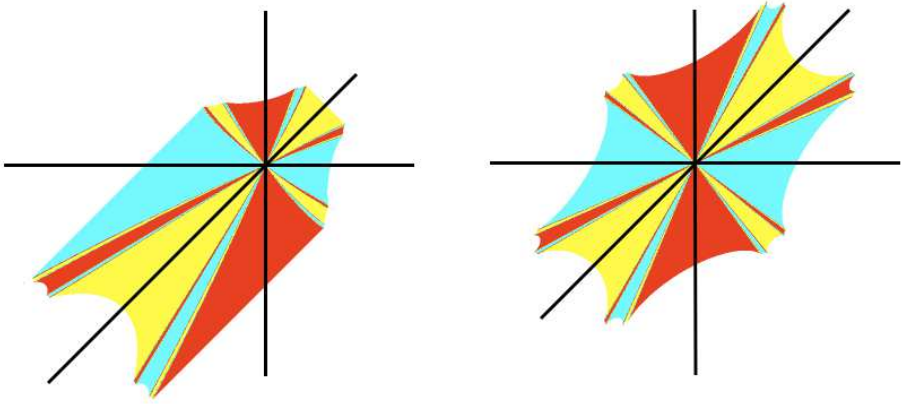}
   \caption{On the left is a real slice of the immediate basin ${\cal B}_{a_\J}(h)$ in the case $|I|=4$ and $\J=\{J_1,J_2\}$ with $|J_1|=3$ and $|J_2|=1$. The critical set $\P(\Delta)$ contains three lines passing through $a_\J$. Almost every orbit in ${\cal B}_{a_\J}(h)$ converges to $a_\J$ tangentially along one of these lines; the immediate basin is colored accordingly. Since $|J_2|=1$, $E_{J_2}=\{0\}$ and $H_{J_1}\oplus H_{J_2}=H_{J_1}$. According to proposition \ref{bottcherprop} below, there is a B\"ottcher coordinate $\Phi: {\cal B}_{a_\J}(h)\to {\cal B}_0(H_{J_1})$. On the right is a real slice of the immediate basin ${\cal B}_0(H_{J_1})$. The isomorphism $\Phi$ respects the coloring.}
   \label{fig:example}
\end{figure}

We will now show that we can apply theorems \ref{theo_local} and \ref{theo_global}. Note that the map $H_{J_1}\oplus H_{J_2}:E_{J_1}\oplus E_{J_2}\to E_{J_1}\oplus E_{J_2}$ is quasihomogeneous of bidegree $(|J_1|,|J_2|)$.

\begin{proposition}\label{bottcherprop}
Let $\J\eqdef\{J_1,J_2\}\in\part$ and $a_\J$ be the image of $L_\J$ in $\P(E)$. Then there is an analytic isomorphism
\[
\Phi:{\cal B}_{a_\J}(h)\rightarrow{\cal B}_0(H_{J_1}\oplus H_{J_2})
\]
conjugating $h$ to $H_{J_1}\oplus H_{J_2}$.
\end{proposition}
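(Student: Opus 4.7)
The plan is to apply theorem \ref{theo_local} to produce a local B\"ottcher coordinate at $a_\J$, then invoke theorem \ref{theo_global} to extend it globally. The map $h$ is proper since $\P(E)$ is compact, so only local matters remain.

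To set up the local picture, I lift $a_\J$ to a fixed point $x\in L_\J$ of $H$ (available because $H$ acts on the line $L_\J$ by a nonzero homogeneous scalar, which can be normalized to $1$ by rescaling). Since the coarsest partition $\{I\}$ lies outside $\part$, no partition is strictly coarser than $\J=\{J_1,J_2\}$, so $\Delta_\J=\emptyset$ and lemma \ref{lemma_expansion} applies at $x$. By proposition \ref{prop_spec}, $T_xE=\ker(D_xH)\oplus T_xL_\J$, so I take $V\eqdef\ker(D_xH)$ as the complement of $L_\J$. The projection $\pi\eqdef\pi_{J_1}\oplus\pi_{J_2}:E\to E_{J_1}\oplus E_{J_2}$ has kernel $L_\J$, so $\pi|_V$ is an isomorphism, giving an affine chart $v\in V\mapsto[x+v]\in\P(E)$ on a neighborhood of $a_\J$ and identifying $T_{a_\J}\P(E)$ with $E_{J_1}\oplus E_{J_2}$. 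In this chart $h$ is represented by a germ $\widetilde h$. The $L_\J$-component of $H(x+v)$ equals $1+\mathcal{O}(\|v\|^2)$ since $D_xH$ annihilates $V$, while lemma \ref{lemma_expansion} gives
\[\pi_{J_i}\circ H(x+v)=C_{J_i}\cdot H_{J_i}\circ\pi_{J_i}(v)+\mathcal{O}\bigl(\|v\|\cdot\|\pi_{J_i}(v)\|^{|J_i|+1}\bigr),\quad C_{J_i}\neq 0.\]
Combining these and then rescaling $u_i\mapsto\lambda_iu_i$ with $\lambda_i^{|J_i|-1}=1/C_{J_i}$ absorbs the constants, so $\widetilde h$ has an adapted superattracting fixed point at $0$ with quasihomogeneous part $H_{J_1}\oplus H_{J_2}$ of multidegree $(|J_1|,|J_2|)$.

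To apply theorem \ref{theo_local}, I verify condition (3) using the admissible pair of vector fields $(\theta_{J_1},\theta_{J_2})$ on $E_{J_1}\oplus E_{J_2}$. Admissibility is immediate from the definition. For tangency to the germ of the postcritical set of $\widetilde h$ at $0$: proposition \ref{critprop} (applied with the singleton partition) identifies the postcritical set of $h$ with $\P(\Delta)$; its components passing through $a_\J$ correspond to partitions $\K=\K_1\cup\K_2$ refining $\J$, and each appears in the chart as the linear subspace $L_{\K_1}\times L_{\K_2}\subseteq E_{J_1}\oplus E_{J_2}$, which is preserved by the scaling flows of $\theta_{J_1}$ and $\theta_{J_2}$. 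Thus both vector fields are tangent to every such component at its smooth points, and theorem \ref{theo_local} yields a local isomorphism $\Phi$ with $\Phi\circ\widetilde h=(H_{J_1}\oplus H_{J_2})\circ\Phi$ near $0$.

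To conclude via theorem \ref{theo_global}, I note that in the chart the local postcritical set $\bigcup_{\K_1,\K_2}L_{\K_1}\times L_{\K_2}$ of $\widetilde h$ is contained in $(\Delta_{J_1}\times E_{J_2})\cup(E_{J_1}\times\Delta_{J_2})$, which is the postcritical set of $H_{J_1}\oplus H_{J_2}$; since $\Phi$ conjugates iterates, this yields an equality of germs of postcritical sets at $0$. All hypotheses of theorem \ref{theo_global} are then met, and $\Phi$ extends to the desired global isomorphism $\mathcal{B}_{a_\J}(h)\to\mathcal{B}_0(H_{J_1}\oplus H_{J_2})$. The most delicate step is the dehomogenization in the local expansion: isolating the $L_\J$-coordinate of $H(x+v)$ using $V=\ker(D_xH)$ and verifying that projectivization contributes only higher-order corrections to the leading-order form provided by lemma \ref{lemma_expansion}; the combinatorial identification of the stratified $\P(\Delta)$ near $a_\J$ with the product postcritical set of $H_{J_1}\oplus H_{J_2}$ is a secondary technicality.
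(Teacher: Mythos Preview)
Your argument is correct and follows the paper's route: use lemma \ref{lemma_expansion} in a local chart at $a_\J$ to exhibit an adapted superattracting fixed point with quasihomogeneous part $c_1H_{J_1}\oplus c_2H_{J_2}$, verify condition (3) of theorem \ref{theo_local} via the bihomogeneity of $\P(\Delta)$ in the chart coordinates, then invoke theorem \ref{theo_global}. Two inconsequential slips: the multidegree is $(|J_1|+1,|J_2|+1)$ since each $H_{J_i}$ is homogeneous of degree $|J_i|+1$, and the rescaling should read $\lambda_i^{|J_i|}=C_{J_i}$.
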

\begin{proof}
The analytic map $h:\P(E)\to\P(E)$ is proper, and has a superattracting fixed point at $a_\J$.  We will prove that there is a local isomorphism 
\[
\Phi:\bigl(\P(E),a_\J\bigr)\to\bigl(E_{J_1}\oplus E_{J_2},0\bigr)
\]
conjugating $h$ to $H_{J_1}\oplus H_{J_2}$. Such a $\Phi$ automatically maps the germ of the postcritical set of $h$ at $a_\J$, \ie{} the germ of $\P(\Delta)$ at $a_\J$,  to the germ of the postcritical set of $H_{J_1}\oplus H_{J_2}$ at 0. 
 So near $a_\J$, $\Phi$ maps the postcritical set of $h:{\cal B}_{a_\J}(h)\to{\cal B}_{a_\J}(h)$, that is ${\cal B}_{a_\J}(h)\cap\P(\Delta)$, to the postcritical set of $H_{J_1}\oplus H_{J_2}$. The result then follows from theorem  \ref{theo_global}. 
 
To prove that there is a local conjugacy, we use theorem \ref{theo_local}. Let 
\[E-\{0\}\ni x\mapsto [x]\in \P(E)\] be the natural projection and let $\sigma$ be a local section defined near $a_\J$ in $\P(E)$. The image of $D_{a_\J}\sigma$ is transverse to $L_\J$ which is the kernel of the projection 
\[\pi\eqdef \pi_{J_1}+\pi_{J_2}:E\to E_{J_1}\oplus E_{J_2}.\]
 It follows that  $\pi\circ D_{a_\J}\sigma$ is invertible and so, the composition 
 \[\alpha\eqdef \pi\circ \sigma:(\P(E),a_\J)\to (E_{J_1}\oplus E_{J_2},0)\] 
 is a local isomorphism.
 It conjugates $h:(\P(E),a_\J)\to (\P(E),a_\J)$ to an analytic germ $F:(E_{J_1}\oplus E_{J_2},0)\to (E_{J_1}\oplus E_{J_2},0)$. 
 
Since $H:E\to E$ lifts $h:\P(E)\to \P(E)$, there is an analytic function $\lambda$ defined near $a_\J$ in $\P(E)$ with values in $\C-\{0\}$ such that
 \[\sigma\circ h=\lambda\cdot H\circ \sigma.\]
Set 
\[\mu \eqdef \lambda\circ \alpha^{-1}:(E_{J_1}\oplus E_{J_2},0)\to (\C,\mu_0) \quad\text{with}\quad \mu_0\eqdef \lambda(0).\]
Set $x\eqdef \sigma(0)$ and 
\[\beta\eqdef \sigma\circ \alpha^{-1}:(E_{J_1}\oplus E_{J_2},0)\to (E,x).\]
Then, for $v\in E_{J_1}\oplus E_{J_2}$ sufficiently close to $0$, we have
\[F(v) = \mu(v)\cdot \pi\circ H\bigl(\beta(v)\bigr).\]
In particular, $F=F_{J_1}+F_{J_2}$ with, for $J\in \J$, 
\[F_J(v)=\mu(v)\cdot \pi_{J}\circ H\bigl(\beta(v)\bigr).\]
Note that if $v=v_{J_1}+v_{J_2}$ with $v_J\in E_J$, then $\pi_{J}\bigl(\beta(v)-x\bigr)=v_J$ and according to lemma \ref{lemma_expansion}, for $J\in \J$, we have
\begin{align*}
F_J(v) &= \mu(v)\cdot C_J\cdot H_{J}(v_J)+{\cal O}\bigl(\|\beta(v)-x\|\cdot \|v_J\|^{|J|+1}\bigr)\\
&= \mu_0\cdot C_J\cdot H_{J}(v_J)+{\cal O}\bigl(\|v\|\cdot \|v_J\|^{|J|+1}\bigr).\end{align*}
This shows that $F$ has an adapted superattracting fixed point with quasihomogeneous part $c_1 H_{J_1}\oplus c_2 H_{J_2}$ for some constants $c_1$ and $c_2$ in $\C-\{0\}$. 
   
Assume $v\eqdef v_{J_1}+v_{J_2}$ with $v_J\in E_{J}$ and set $y\eqdef \sigma\circ \alpha^{-1}(v)$. Then $v$ is contained in the postcritical set of $F$ near 0 if and only if $y\in\Delta$. As $v$ tends to $0$, $y$ tends to $x$ and since $x(J_1)\cap x(J_2)=\emptyset$, if $v$ is sufficiently close to 0, the sets $y(J_1)$ and $y(J_2)$ are disjoint. In that case, $y\in \Delta$ if and only if $v_{J_1}:J_1\to\C$ or $v_{J_2}:J_2\to\C$ is not injective. As a consequence, if $v$ is  contained in the postcritical set of $F$ near 0 then $\lambda_1 v_{J_1}+\lambda_2 v_{J_2}$ is contained in the postcritical set of $F$ for all $(\lambda_1,\lambda_2)\in\C^2$ and the vector fields $\theta_{J_1}$ and $\theta_{J_2}$ are tangent to the postcritical set of $F$ near 0. 
 
 According to theorem 1, there is a local isomorphism conjugating $F$ near 0 to $c_1H_{J_1}\oplus c_2 H_{J_2}$ near 0. The existence of a B\"ottcher coordinate 
 \[
 \Phi:(\P(E),a_\J)\to (E_{J_1}\oplus E_{J_2},0)
 \]
 conjugating the map $h:(\P(E),a_\J)\to (\P(E),a_\J)$ to the quasihomogeneous map $H_{J_1}\oplus H_{J_2}:(E_{J_1}\oplus E_{J_2},0)\to (E_{J_1}\oplus E_{J_2},0)$ follows immediately. 
\end{proof}

\section{Questions for further study}\label{questions}

Theorem \ref{theo_global} asserts that when $a_\J\in\P(E)$ is a superattracting fixed point of $h:\P(E)\to\P(E)$ then there is a B\"ottcher coordinate $\Phi:{\cal B}_a(h)\to {\cal B}_0(H_\J)$ with $H_\J\eqdef H_{J_1}\oplus H_{J_2}:E_{J_1}\oplus E_{J_2}\to E_{J_1}\oplus E_{J_2}$. The boundary of ${\cal B}_0(H_\J)$ is a topological sphere of real dimension $2m-5$. Does the inverse $\Phi^{-1}:{\cal B}_0(H_\J)\to {\cal B}_a(h)$ extend continuously to the boundary of ${\cal B}_0(H_\J)$? Is the boundary of ${\cal B}_a(h)$ topologically the quotient of a sphere by an equivalence relation? How could such an equivalence relation be described? 

In dimension one, a global B\"ottcher coordinate gives rise to a dynamical foliation of the immediate basin by rays. What is the higher dimensional analog of these rays? 

In dimension one, if two germs with a superattracting fixed point are topologically conjugate, then this topological conjugacy can be promoted to an analytic conjugacy via a pullback argument. Can one give a topological and/or analytic classification of germs having a superattracting fixed point in higher dimensions? Do these classifications coincide? 

In section \ref{applications}, we applied theorems \ref{theo_local} and \ref{theo_global} to a superattracting fixed point $a_\J$, which was the image of $L_\J$ in $\P(E)$, where $|\J|=2$. What happens for $|\J|>2$? In this case, $\mathrm{dim}(L_\J)=|\J|-1$, and the image in $\P(E)$ will be a projective space of dimension $|\J|-2$. Is this projective space an attractor (in the sense of \cite{mi})? 

If $|\J|>2$ and if $a\in\P(L_\J)$ is a fixed point, then according to proposition \ref{prop_spec} the spectrum of $D_a h:T_a\P(L_\J)\to T_a\P(L_\J)$ belongs to $\C-\overline\D$, and $\P(L_\J)$ is the unstable manifold of $a$. The unique additional eigenvalue of $D_a h:T_a\P(E)\to T_a\P(E)$ is 0. What is the structure of the set 
\[
W^s(a)\eqdef\left\{b\in\P(E):h^{\circ n}(b)\underset{n\to\infty}\longrightarrow a\right\}?
\]
Is it a smooth analytic submanifold of $\P(E)$? Is it dynamically parameterized by the attracting basin of the quasihomogeneous map 
\[
\bigoplus_{J\in\J} H_J:\bigoplus_{J\in\J} E_J\to \bigoplus_{J\in\J} E_J?
\]

The maps to which we applied our theorems in section \ref{applications} were postcritically finite. Are there examples (apart from the quasihomogeneous maps themselves) which are algebraic, and not postcritically finite but admit a B\"ottcher coordinate? The converse is false: consider the map $F:\C^2\to \C^2$ given by $F:(x,y)\mapsto (x^2-y^3,y^2)$. One can verify that $F$ is postcritically finite. The derivative $D_0 F$ is nilpotent so that $F^{\circ 2}$ has a superattracting fixed point at 0. In addition 
\[
F^{\circ 2}(x,y)=(y^2+o(y^2),y^4+o(y^4)).
\]
The map $F^{\circ 2}$ cannot be locally conjugate to the map $(x,y)\mapsto (y^2,y^4)$ as this map is not open.

\end{document}